\numberwithin{equation}{section}
\numberwithin{figure}{section}
\newlength{\lyxlistindent}      
\theoremstyle{plain}
\newtheorem{thm}{\protect\theoremname}
  \theoremstyle{definition}
  \newtheorem{defn}[thm]{\protect\definitionname}
  \theoremstyle{definition}
  \newtheorem{example}[thm]{\protect\examplename}
  \theoremstyle{plain}
  \newtheorem{cor}[thm]{\protect\corollaryname}
  \theoremstyle{remark}
  \newtheorem*{rem*}{\protect\remarkname}
  \theoremstyle{plain}
  \newtheorem{lem}[thm]{\protect\lemmaname}
  \theoremstyle{plain}
  \newtheorem*{lem*}{\protect\lemmaname}
\newcommand{\xyR}[1]{ \makeatletter
\xydef@\xymatrixrowsep@{#1} \makeatother} 
\newcommand{\xyC}[1]{ \makeatletter
\xydef@\xymatrixcolsep@{#1} \makeatother} 
\newcommand{\ra}{\longrightarrow}
\newcommand{\field}[1]{\mathbb{#1}}
\newcommand{\R}{\field{R}} 
\newcommand{\N}{\field{N}} 
\newcommand{\hyperR}{{^*\R}} 
\newcommand{\Z}{\ensuremath{\mathbb{Z}}} 
\newcommand{\mZ}{\mathcal{Z}}
\newcommand{\D}{\mathcal{D}}
\newcommand{\eps}{\varepsilon} 
\renewcommand{\phi}{\varphi} 
\newcommand{\diff}[1]{\,\hbox{\rm d}#1} 
\newcommand{\Coo}{\mbox{\ensuremath{\mathcal{C}}}^{\infty}} 
\newcommand{\Rtil}{\widetilde \R} 
\newcommand{\otilc}{\widetilde \Omega_c} 
\newcommand{\gss}{{\mathcal G}^{\text{\rm s}}} 
\newcommand{\gse}{{\mathcal G}^{\text{\rm e}}} 
\newcommand{\gsd}{{\mathcal G}^{\text{\rm d}}} 
\newcommand{\gsh}{\hat{\mathcal{G}}} 
\newcommand{\erm}{\text{\rm e}} 
\newcommand{\srm}{\text{\rm s}}
\newcommand{\drm}{\text{\rm d}}
\newcommand{\ptind}{\displaystyle \mathop {\ldots\ldots\,}} 
\newcommand{\then}{\quad \Longrightarrow \quad} 
\newcommand{\maC}{\mathcal{C}}
\newcommand{\ep}{\varepsilon}
\newcommand{\B}{\mathcal{B}}
\newcommand{\GBZ}{\mathcal{G}(\mathcal{B},\mathcal{Z},\Omega)}
\newcommand{\ue}{(u_{\eps})}
\newcommand{\re}{\rho_{\eps}}
  \providecommand{\corollaryname}{\inputencoding{latin9}Corollary}
  \providecommand{\definitionname}{\inputencoding{latin9}Definition}
  \providecommand{\examplename}{\inputencoding{latin9}Example}
  \providecommand{\lemmaname}{\inputencoding{latin9}Lemma}
  \providecommand{\remarkname}{\inputencoding{latin9}Remark}
\providecommand{\theoremname}{\inputencoding{latin9}Theorem}
\begin{document}

\title{Asymptotic gauges: Generalization of Colombeau type algebras}

\author{Paolo Giordano \and Lorenzo Luperi Baglini}

\thanks{P.\ Giordano has been $\text{supp}$orted by grant P25116-N25 of
the Austrian Science Fund FWF}

\address{\textsc{Faculty of Mathematics, University of Vienna, Austria, Oskar-Morgenstern-Platz
1, 1090 Wien, Austria}}

\thanks{L.\ Luperi Baglini has been $\text{supp}$orted by grant P25311-N25
of the Austrian Science Fund FWF}

\email{\texttt{paolo.giordano@univie.ac.at, }\foreignlanguage{english}{\texttt{lorenzo.luperi.baglini@univie.ac.at}}}

\subjclass[2010]{Principal: 46F30. Secondary: 34A99.}

\keywords{Colombeau-type algebras, asymptotic gauges, embeddings of distributions,
linear ODEs with generalized coefficients, set of indices.}
\begin{abstract}
We use the general notion of set of indices to construct algebras
of nonlinear generalized functions of Colombeau type. They are formally
defined in the same way as the special Colombeau algebra, but based
on more general ``growth condition'' formalized by the notion of
asymptotic gauge. This generalization includes the special, full and
nonstandard analysis based Colombeau type algebras in a unique framework.
We compare Colombeau algebras generated by asymptotic gauges with
other analogous construction, and we study systematically their properties,
with particular attention to the existence and definition of embeddings
of distributions. We finally prove that, in our framework, for every
linear homogeneous ODE with generalized coefficients there exists
a minimal Colombeau algebra generated by asymptotic gauges in which
the ODE can be uniquely solved. This marks a main difference with
the Colombeau special algebra, where only linear homogeneous ODEs
satisfying some restriction on the coefficients can be solved.
\end{abstract}
\maketitle

\section{Introduction}

Currently, a successful approach to modeling singularities as generalized
functions in a nonlinear context is the theory of Colombeau-type algebras
of generalized functions. The basic underlying idea is to regularize
distributions (or even more singular quantities) through nets of smooth
functions depending on a regularization parameter $\eps$ and then
to quantify asymptotically the strength of singularities in terms
of this parameter $\eps$. \foreignlanguage{british}{In particular,
these algebras contain the space of Schwartz distributions as a linear
subspace and the algebra of smooth functions as a faithful subalgebra.
We suppose the reader to have a certain familiarity with this topic
and refer to \cite{Col84,Col85,Biag,Col92,MO92,GKOS} for detailed
information; as for terminology and notations we mainly follow \cite{GKOS}.}

Since the beginning of this theory, it was natural to generalize Colombeau
construction replacing the family $\left(\eps^{n}\right)_{\eps\in(0,1],n\in\N}$
with different ``scales''. So, we have the notions of asymptotic
scales (see e.g.\ \cite{DelSca98,DelSca00}), $(\maC,\mathcal{E},\mathcal{P})$-algebras
(see e.g.\ \cite{Del09,Has11} and references therein), and sequence
spaces with exponent weights (\cite{DPHV}).

In realizing this generalization, one can ask problems like:
\begin{itemize}
\item When do we obtain an algebra? 
\item When can we embed Schwartz distributions using the common method of
regularization by means of a mollifier?
\item How can we use this generalization to solve differential problems
having a singular growth, i.e.\ growing more than polynomially with
$\eps$?
\end{itemize}
The present work inscribes in this research thread. It is hence natural
to clarify the relationship between our approach and the cited articles,
and to highlight what we obtain more with respect to them. In this
introduction, we start this clarification, so that the reader can
more easily understand the general picture.

\medskip{}

\textbf{Smooth functions:} First of all, a general approach is frequently
preferred, e.g.\ by considering a sheaf $\mathcal{E}$ of topological
algebras, and suitable families of seminorms. On the contrary, in
the present work we will only consider the usual sheaf of smooth functions
$\Coo(\Omega)$, for $\Omega$ open in $\R^{n}$, and the usual family
of seminorms $\sup_{x\in K}\left|\partial^{\alpha}f(x)\right|$, where
$K\Subset\Omega$ and $\alpha\in\N^{n}$. In spite of our choice,
as it is clearly stated in \cite[pag. 394]{Del09}: ``Except in a
few cases, the sheaf $\mathcal{E}$ is chosen to be the sheaf of smooth
functions''. In this way, we can focus on the conditions we need
to impose to the more general family of scales. We can thus avoid
to add further conditions which trivially hold in the (almost) unique
case that most of the readers will consider. We hence left to the
interested reader the natural generalization of the present work to
a more abstract framework.\medskip{}

\textbf{Set of indices:} The results of this paper are proved for
a generic set of indices, a new unifying structure introduced in \cite{GiNi14}.
This permits to include several Colombeau algebras in the same framework
and notations: the special algebra $\gss$, the full algebra $\gse$
and the nonstandard analysis based algebra of asymptotic functions
$\gsh$. Even if in the present article we are going to develop only
the cases $\gss$, $\gse$ and $\gsh$, we are strongly convinced
that with minor modifications (see \cite{GiNi14}), the results we
are going to present can also be applied to the diffeomorphism invariant
algebras $\gsd$, $\mathcal{G}^{2}$ and $\hat{\mathcal{G}}$ of \cite{GKOS}.

Both asymptotic scales and $(\maC,\mathcal{E},\mathcal{P})$-algebras
apply only to the special case. \foreignlanguage{british}{Sequence
spaces with exponent weights} applies to $\gss$, $\gse$, $\gsd$
but not to $\gsh$.\medskip{}

\textbf{Logical structure:} One of the key features in using a set
of indices is that for all the algebras $\gss$, $\gse$, $\gsh$,
$\gsd$, $\mathcal{G}^{2}$ and $\hat{\mathcal{G}}$ of \cite{GKOS},
we have the same logical structure of the simple Colombeau algebra,
i.e.\ $\forall K\,\forall\alpha\,\exists N$ and $\forall K\,\forall\alpha\,\forall m$,
followed by a suitable big-O asymptotic relation. Both $\gse$ and
$\gsd$ can be seen as \foreignlanguage{british}{sequence spaces with
exponent weights}, but continuing to use the usual more involved logical
structure (hidden by the use of infinite intersections and unions).
On the other hand, this permits to \cite{DPHV} to underscore the
relationship between sequence spaces with exponent weights and Maddox
sequence spaces.

\medskip{}

\textbf{Scales as primitive data:} Like in \cite{DelSca98,DPHV},
we take the choice of the ``scale'' as one of the primitive data.
This approach is methodologically a little different from $(\maC,\mathcal{E},\mathcal{P})$-algebras,
where the scale is hidden in the pair $(A,I)$ of the ring $A$ of
moderate and the ideal $I$ of negligible scalars, but where the ring
$A$ usually contains much more than only the scales. For example,
when $A$ is polynomially overgenerated, it contains both infinitesimals
and infinite nets. Moreover, polynomially overgenerated rings do not
permit to obtain an algebra closed with respect to exponential (see
\cite{Del09}). This represents a limitation e.g.\ in solving even
linear ODE with generalized constant coefficients. On the contrary,
we prove in Thm. \ref{thm:linearConstantCoeffODE} that every linear
ODE with constant (generalized) coefficients whose scale is of type
$\mathcal{B}$ has a unique solution whose scale is of type $e^{\mathcal{B}}$
(see Def. \ref{def:expB}). Moreover, the Colombeau-like algebra defined
starting from the scale $e^{\mathcal{B}}$ is the smallest among this
type of algebras where every ODE of this type has a solution (see
Thm. \ref{thm:expB-smallest}). \medskip{}

\textbf{Embedding of distributions:} In Thm. \ref{thm:principalNecessary},
we characterize which scales permit to embed distributions using a
mollifier and respecting the product of smooth functions. Our results
also clarify when, in other approaches, embeddings of distributions
are possible and when they are not. About this problem, see also \cite[pag. 1-2]{DelSca00}
and \cite{DPHV}.\medskip{}

\textbf{Generality:} Considering \cite{Has11,DPHV}, it is already
known that the $(\maC,\mathcal{E},\mathcal{P})$-algebras approach
is the most general one. In section \ref{sub:A-comparison}, we prove
that when we consider the usual sheaf of smooth functions, and we
consider only the special algebra case, the $(\maC,\mathcal{E},\mathcal{P})$-algebras
approach is equivalent to our approach.

\subsection{Set of indices}

In this section, we recall notations and notions from \cite{GiNi14}
that we will use in the present work. For all the proofs, we refer
to \cite{GiNi14}. In the naturals $\N$ we always include zero.

If $\phi\in\D(\R^{n})$, $r\in\R_{>0}$ and $x\in\R^{n}$, we use
the notations $r\odot\phi$ for the function $x\in\R^{n}\mapsto\frac{1}{r^{n}}\cdot\phi\left(\frac{x}{r}\right)\in\R$
and $x\oplus\phi$ for the function $y\in\R^{n}\mapsto\phi(y-x)\in\R$.
These new notations permit to highlight that $\odot$ is a free action
of the multiplicative group $(\R_{>0},\cdot,1)$ on $\D(\R^{n})$
and $\oplus$ is a free action of the additive group $(\R_{>0},+,0)$
on $\D(\R^{n})$. We also have the distributive property $r\odot(x\oplus\phi)=rx\oplus r\odot\phi$.
\begin{defn}
\label{def:setOfIndices}We say that $\mathbb{I}=(I,\le,\mathcal{I})$
is a \emph{set of indices} if the following conditions hold:
\begin{enumerate}[leftmargin=*,label=(\roman*),align=left ]
\item \foreignlanguage{british}{\label{enu:DefSoI-preorder}$(I,\le)$
is a pre-ordered set, i.e.\ it is a non empty set $I$ with a reflexive
and transitive relation $\le$.}
\item \label{enu:DefSoI-union}$\mathcal{I}$ is a set of subsets of $I$
such that $\emptyset\notin\mathcal{I}$ and $I\in\mathcal{I}$.
\item \label{enu:DefSoI-intersection}$\forall A,B\in\mathcal{I}\,\exists C\in\mathcal{I}:\ C\subseteq A\cap B$.
\end{enumerate}

\noindent For all $e\in I$, set $(\emptyset,e]:=\left\{ \eps\in I\mid\eps\le e\right\} $.
As usual, we say $\eps<e$ if $\eps\le e$ and $\eps\ne e$. Using
these notations, we state the last condition in the definition of
set of indices:
\begin{enumerate}[%
leftmargin={*},label=(\roman*),align=left,start=4%
]
\item \label{enu:DefSoI-DonwDir}If $e\le a\in A\in\mathcal{I}$, the set
$A_{\le e}:=(\emptyset,e]\cap A$ is downward directed by $<$, i.e.,
it is non empty and 
\begin{equation}
\forall b,c\in A_{\le e}\,\exists d\in A_{\le e}:\ d<b\ ,\ d<c.\label{eq:strictlyDownDirected}
\end{equation}

\end{enumerate}
\end{defn}
Henceforward, functions of the type $f:I\ra\R$ will also be called
\emph{nets}, and for their evaluation we will both use the notations
$f_{\eps}$ or $f(\eps)$, in case the subscript notation is too cumbersome.
When the domain $I$ is clear, we use also the notation $f=\left(f_{\eps}\right)$
for the whole net. Analogous notations will be used for nets of smooth
functions $u=(u_{\eps})\in\mathcal{C}(\Omega)^{I}$.
\begin{example}
\label{exa:stdAndNS}\ 
\begin{enumerate}[leftmargin=*,label=(\roman*),align=left ]
\item Conditions \ref{enu:DefSoI-union} and \ref{enu:DefSoI-intersection}
can be summarized saying that $\mathcal{I}$ is a filter base on $I$
which contains $I$.
\item \label{enu:I^s}The simplest example of set of indices is given by
$I^{\srm}:=(0,1]\subseteq\R$, the relation $\le$ is the usual order
relation on $\R$, and $\mathcal{I}^{\srm}:=\left\{ (0,\eps_{0}]\mid\eps_{0}\in I\right\} $.
We denote by $\mathbb{I}^{\srm}:=(I^{\srm},\le,\mathcal{I}^{\srm})$
this set of indices which, of course, is that used for the special
algebra $\gss$.
\item \label{enu:Ihat}In the context of \cite{ToVe08}, we set $\hat{I}:=\D_{0}=\D(\R^{d})$.
The pre-order relation is defined by $\phi\le\psi$ iff $\underline{\phi}\le\underline{\psi}$,
where $\underline{\phi}:=\text{diam}\left(\text{supp}(\phi)\right)$
if $\phi\ne0$ and $\underline{\phi}:=1$ otherwise. $\hat{\mathcal{I}}$
is the free ultrafilter on $\D_{0}$ used in \cite{ToVe08}. Then
$\hat{\mathbb{I}}:=(\hat{I},\le,\hat{\mathcal{I}})$ is a set of indices.
\item \label{enu:setOfIndicesFullAlgebra}With the usual notations of \cite{GKOS}
for the full algebra $\gse$, we define $I^{\erm}:=\mathcal{A}_{0}$,
$\mathcal{I}^{\erm}:=\left\{ \mathcal{A}_{q}\mid q\in\N\right\} $,
and for $\eps$, $e\in I^{\erm}$, we define $\eps\le e$ iff there
exists $r\in\R_{>0}$ such that $r\le1$ and $\eps=r\odot e$. Then
$\mathbb{I}^{\erm}:=(I^{\erm},\le,\mathcal{I}^{\erm})$ is a set of
indices.
\end{enumerate}
\end{example}
As we mentioned in the introduction, in the present work we will actually
consider only these examples of set of indices.

In each set of indices, we can define two notions of big-O that formally
behave in the usual way. Since each set of the form $A_{\le e}=(\emptyset,e]\cap A$
is downward directed, the first big-O is the usual one:
\begin{defn}
\label{def:usualBigOh}Let $\mathbb{I}=(I,\le,\mathcal{I})$ be a
set of indices. Let $a\in A\in\mathcal{I}$ and $(x_{\eps})$, $(y_{\eps})\in\R^{I}$
be two nets of real numbers defined in $I$. We write
\begin{equation}
x_{\eps}=O_{a,A}(y_{\eps})\ \text{as }\eps\in\mathbb{I}\label{eq:usualBigOh}
\end{equation}
if
\begin{equation}
\exists H\in\R_{>0}\,\exists\eps_{0}\in A_{\le a}\,\forall\eps\in A_{\le\eps_{0}}:\ |x_{\eps}|\le H\cdot|y_{\eps}|.\label{eq:1stBigOhDef}
\end{equation}

\end{defn}
\ 
\begin{defn}
\label{def:2ndBigOh}Let $\mathbb{I}=(I,\le\mathcal{I})$ be a set
of indices. Let $\mathcal{J}\subseteq\mathcal{I}$ be a non empty
subset of $\mathcal{I}$ such that
\begin{equation}
\forall A,B\in\mathcal{J}\,\exists C\in\mathcal{J}:\ C\subseteq A\cap B.\label{eq:hypJ}
\end{equation}
Finally, let $(x_{\eps})$, $(y_{\eps})\in\R^{I}$ be nets of real
numbers. Then we say
\[
x_{\eps}=O_{\mathcal{J}}(y_{\eps})\text{ as }\eps\in\mathbb{I}
\]
if
\[
\exists A\in\mathcal{J}\,\forall a\in A:\ x_{\eps}=O_{a,A}(y_{\eps}).
\]
We simply write $x_{\eps}=O(y_{\eps})$ (as $\eps\in\mathbb{I}$)
when $\mathcal{J=I}$, i.e.\ to denote $x_{\eps}=O_{\mathcal{I}}(y_{\eps})$.
\end{defn}
The simplification consequent to the use of the second notion of big-O
is due to the following theorem, which states that also the second
big-O formally behaves as expected:
\begin{thm}
\label{thm:2ndBigOh-prop}Under the assumptions of Def. \ref{def:2ndBigOh},
the following properties of $O_{\mathcal{J}}$, as $\eps\in\mathbb{I}$,
hold:
\begin{enumerate}[leftmargin=*,label=(\roman*),align=left ]
\item \label{enu:2ndBigOh-rifl}$x_{\eps}=O_{\mathcal{J}}(x_{\eps})$;
\item \label{enu:2ndBigOh-trans}if $x_{\eps}=O_{\mathcal{J}}(y_{\eps})$
and $y_{\eps}=O_{\mathcal{J}}(z_{\eps})$ then $x_{\eps}=O_{\mathcal{J}}(z_{\eps})$;
\item \label{enu:2ndBigOh-prod}$O_{\mathcal{J}}(x_{\eps})\cdot O_{\mathcal{J}}(y_{\eps})=O_{\mathcal{J}}(x_{\eps}\cdot y_{\eps})$;
\item \label{enu:2ndBigOh-sum}$O_{\mathcal{J}}(x_{\eps})+O_{\mathcal{J}}(y_{\eps})=O_{\mathcal{J}}\left(\left|x_{\eps}\right|+\left|y_{\eps}\right|\right)$;
\item \label{enu:2ndBigOh-prodExt}$x_{\ep}\cdot O_{\mathcal{J}}(y_{\eps})=O_{\mathcal{J}}(x_{\eps}\cdot y_{\eps})$;
\item \label{enu:2ndBigOh-sumEqualSummand}$O_{\mathcal{J}}(x_{\eps})+O_{\mathcal{J}}(x_{\eps})=O_{\mathcal{J}}(x_{\eps})$;
\item \label{enu:2ndBigOh-sumExt}if $x_{\eps},y_{\eps}\ge0$ for all $\eps\in I$,
then $x_{\eps}+O_{\mathcal{J}}(y_{\eps})=O_{\mathcal{J}}(x_{\eps}+y_{\eps})$;
\item \label{enu:2ndBigOh-prodScal1}$\forall k\in\R_{\ne0}:\ O_{\mathcal{J}}(k\cdot x_{\eps})=O_{\mathcal{J}}(x_{\eps})$;
\item \label{enu:2ndBigOh-prodScal2}$\forall k\in\R:\ k\cdot O_{\mathcal{J}}(x_{\eps})=O_{\mathcal{J}}(x_{\eps})$.
\end{enumerate}
\end{thm}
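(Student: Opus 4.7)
The plan is to verify all nine properties by one common template: unpack the definition of $O_\mathcal{J}$ on the hypothesis side, use the filter-base condition \eqref{eq:hypJ} to intersect the two witness sets $A,B\in\mathcal{J}$ into a single $C\in\mathcal{J}$ with $C\subseteq A\cap B$, and then for each $c\in C$ combine the pointwise estimates on $C_{\le\eps_0}$ into the required $O_{c,C}$ relation. The reflexivity \ref{enu:2ndBigOh-rifl} would be immediate: take any $A\in\mathcal{J}$ (non empty because $\mathcal{J}\subseteq\mathcal{I}$ and $\emptyset\notin\mathcal{I}$), for each $a\in A$ pick $\eps_0\in A_{\le a}$ (non empty by axiom \ref{enu:DefSoI-DonwDir}) and use the constant $H=1$.

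For the compound properties \ref{enu:2ndBigOh-trans}--\ref{enu:2ndBigOh-sumExt} I would extract, from the two hypotheses, constants $H_1,H_2\in\R_{>0}$ and thresholds $\eps_0^{(1)}\in A_{\le c}$, $\eps_0^{(2)}\in B_{\le c}$ attached to each $c\in C$, and then conclude the target pointwise inequality by elementary manipulation: transitive chaining $|x_\eps|\le H_1|y_\eps|\le H_1H_2|z_\eps|$ in \ref{enu:2ndBigOh-trans}; the submultiplicative product bound $|a_\eps b_\eps|\le H_1H_2|x_\eps y_\eps|$ in \ref{enu:2ndBigOh-prod} and \ref{enu:2ndBigOh-prodExt}; the triangle-type bound $|a_\eps+b_\eps|\le(H_1+H_2)(|x_\eps|+|y_\eps|)$ in \ref{enu:2ndBigOh-sum} and \ref{enu:2ndBigOh-sumEqualSummand}; and, using $x_\eps,y_\eps\ge 0$, the bound $|x_\eps+a_\eps|\le x_\eps+H_2 y_\eps\le\max(1,H_2)(x_\eps+y_\eps)$ in \ref{enu:2ndBigOh-sumExt}.

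The main obstacle I anticipate is a purely combinatorial one: producing, for each $c\in C$, a common threshold $\eps_0\in C_{\le c}$ which is simultaneously $\le\eps_0^{(1)}$ and $\le\eps_0^{(2)}$, since a priori $\eps_0^{(1)}\in A$ and $\eps_0^{(2)}\in B$ need not belong to $C$. I would resolve this in two stages using axiom \ref{enu:DefSoI-DonwDir} applied to $C$: first, because $\eps_0^{(1)}\le c\in C\in\mathcal{I}$, the set $C_{\le\eps_0^{(1)}}$ is non empty, so I pick $\eta_1\in C$ with $\eta_1\le\eps_0^{(1)}$, and analogously $\eta_2\in C$ with $\eta_2\le\eps_0^{(2)}$; second, both $\eta_1,\eta_2$ lie in $C_{\le c}$, which is downward directed by \ref{enu:DefSoI-DonwDir}, so some $\eps_0\in C_{\le c}$ satisfies $\eps_0<\eta_1$ and $\eps_0<\eta_2$, and then $\eps_0\le\eps_0^{(1)},\eps_0^{(2)}$ by transitivity. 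Any $\eps\in C_{\le\eps_0}$ then lies in $A\cap B$ and is below both $\eps_0^{(1)}$ and $\eps_0^{(2)}$, so both pointwise bounds activate simultaneously and the compound estimate follows.

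The scalar identities \ref{enu:2ndBigOh-prodScal1} and \ref{enu:2ndBigOh-prodScal2} would need no new idea: one reuses the witness $A\in\mathcal{J}$ and threshold $\eps_0$ of the hypothesis and simply rescales the constant $H$ by $|k|$, using $k\ne 0$ in \ref{enu:2ndBigOh-prodScal1} to divide in the converse direction. Properties \ref{enu:2ndBigOh-prodExt} and the one-hypothesis reading of \ref{enu:2ndBigOh-sumExt} are a simpler instance of the same recipe with a single witness set $A$ in place of $C$, so no intersection step is required.
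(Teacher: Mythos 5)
The paper does not actually prove Theorem~\ref{thm:2ndBigOh-prop}: it is recalled from~\cite{GiNi14} at the beginning of the section, where the authors explicitly defer all proofs to that reference. So there is no in-paper proof to compare against; I can only assess your argument on its own terms.

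Your argument is correct and complete. The only genuinely non-trivial point in the whole theorem is the combinatorial step you isolate: after intersecting the two witnesses $A,B\in\mathcal{J}$ to $C\in\mathcal{J}$, the thresholds $\eps_0^{(1)}\in A_{\le c}$ and $\eps_0^{(2)}\in B_{\le c}$ need not lie in $C$, so one cannot directly invoke downward directedness of $C_{\le c}$ on them. Your two-stage resolution handles this correctly: axiom~\ref{enu:DefSoI-DonwDir} applied with $e=\eps_0^{(i)}$, $a=c$, $A=C$ gives $\eta_i\in C_{\le\eps_0^{(i)}}\subseteq C_{\le c}$, and then downward directedness of $C_{\le c}$ produces a single $\eps_0\in C_{\le c}$ with $\eps_0<\eta_1,\eta_2$; transitivity of $\le$ then puts every $\eps\in C_{\le\eps_0}$ into both $A_{\le\eps_0^{(1)}}$ and $B_{\le\eps_0^{(2)}}$ simultaneously, as required. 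The pointwise estimates for each of the nine clauses are the obvious ones and are stated correctly, including the use of $k\neq0$ for the converse inclusion in~\ref{enu:2ndBigOh-prodScal1} and the observation that~\ref{enu:2ndBigOh-prodExt}, \ref{enu:2ndBigOh-sumExt} need only a single witness set. No gaps.
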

An analogue of Thm. \ref{thm:2ndBigOh-prop} holds also for the first
notion of big-O, i.e.\ for the relation $x_{\eps}=O_{a,A}(y_{\eps})$
as $\eps\in\mathbb{I}$.

The unifying properties of these notions are explained in the following
results:
\begin{cor}
\label{cor:specialAlgebra}Let $\Omega\subseteq\R^{n}$ be an open
set and $(u_{\eps})\in\mathcal{C}^{\infty}(\Omega,\R)$ be a net of
smooth functions. We use the notations of \cite{GKOS} for moderate
and negligible nets related to the special algebra $\gss(\Omega)$,
and the notations of \cite{ToVe08} for similar notions related to
the algebra $\gsh(\Omega)$ of asymptotic functions. Moreover, we
recall that\foreignlanguage{english}{ $\underline{\eps}:=\min\{\text{\emph{diam}}\left(\text{\emph{supp}}(\eps)\right),1\}$,
where $\eps\in\D(\R^{d})$}. Then
\begin{enumerate}[leftmargin=*,label=(\roman*),align=left ]
\item \foreignlanguage{british}{$(u_{\eps})\in\mathcal{E}_{M}^{\srm}(\Omega)$
if and only if
\[
\forall K\Subset\Omega\,\forall\alpha\in\N^{n}\,\exists N\in\N:\ \sup_{x\in K}\left|\partial^{\alpha}u_{\eps}(x)\right|=O(\eps^{-N})\text{ as }\eps\in\mathbb{I}^{\srm};
\]
}
\item $(u_{\eps})\in\mathcal{N}^{\srm}(\Omega)$ if and only if
\[
\forall K\Subset\Omega\,\forall\alpha\in\N^{n}\,\forall m\in\N:\ \sup_{x\in K}\left|\partial^{\alpha}u_{\eps}(x)\right|=O(\eps^{m})\text{ as }\eps\in\mathbb{I}^{\srm};
\]

\item $(u_{\eps})\in\mathcal{M}\left(\mathcal{E}(\Omega)^{\D_{0}}\right)$
if and only if 
\[
\forall K\Subset\Omega\,\forall\alpha\in\N^{n}\,\exists N\in\N:\ \sup_{x\in K}\left|\partial^{\alpha}u_{\eps}(x)\right|=O(\underline{\eps}^{-N})\text{ as }\eps\in\hat{\mathbb{I}};
\]

\item $(u_{\eps})\in\mathcal{N}\left(\mathcal{E}(\Omega)^{\D_{0}}\right)$
if and only if
\[
\forall K\Subset\Omega\,\forall\alpha\in\N^{n}\,\forall m\in\N:\ \sup_{x\in K}\left|\partial^{\alpha}u_{\eps}(x)\right|=O(\underline{\eps}^{m})\text{ as }\eps\in\hat{\mathbb{I}}.
\]

\end{enumerate}
\end{cor}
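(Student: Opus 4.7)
\emph{Proof plan.} The statement is a verification that, for the two explicit sets of indices $\mathbb{I}^{\srm}$ and $\hat{\mathbb{I}}$ of Example~\ref{exa:stdAndNS}, the abstract big-O of Def.~\ref{def:2ndBigOh} reproduces the classical moderateness and negligibility conditions used in the definitions of $\gss(\Omega)$ and $\gsh(\Omega)$. The plan is to unfold both sides of each equivalence and match the quantifiers, using property~\ref{enu:DefSoI-DonwDir} of Def.~\ref{def:setOfIndices} to navigate between ``local'' and ``global'' bounds on the various sets appearing in $\mathcal{I}$.

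For (i) and (ii), the preorder on $I^{\srm}=(0,1]$ is the usual order and $\mathcal{I}^{\srm}$ consists of the initial segments $(0,\eps_{0}']$. Unwinding Def.~\ref{def:2ndBigOh}, $x_{\eps}=O(y_{\eps})$ as $\eps\in\mathbb{I}^{\srm}$ reads: there is $\eps_{0}'\in(0,1]$ such that, for every $a\in(0,\eps_{0}']$, there exist $H>0$ and $\eps_{0}\in(0,a]$ with $|x_{\eps}|\le H\,|y_{\eps}|$ on $(0,\eps_{0}]$. The classical estimate ``$\exists H,\exists\eps_{0}:\,|x_{\eps}|\le H|y_{\eps}|$ for all $\eps\in(0,\eps_{0}]$'' yields the unified statement by taking $\eps_{0}':=\eps_{0}$ and, for any $a\in(0,\eps_{0}]$, reusing the same $H$ with $\eps_{0}:=a$; conversely, instantiating $a:=\eps_{0}'$ in the unified formulation recovers the classical one. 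Specialising $y_{\eps}=\eps^{-N}$ gives (i) and $y_{\eps}=\eps^{m}$ gives (ii).

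For (iii) and (iv) the same scheme applies to $\hat{\mathbb{I}}$, where $\hat{\mathcal{I}}$ is the ultrafilter on $\D_{0}$ of \cite{ToVe08} and the preorder is $\phi\le\psi\iff\underline{\phi}\le\underline{\psi}$. The Todorov--Vernaeve definition of $\M(\mathcal{E}(\Omega)^{\D_{0}})$ asks that, for every $K,\alpha$, there exist $N\in\N$ and $A\in\hat{\mathcal{I}}$ on which $\sup_{x\in K}|\partial^{\alpha}u_{\phi}(x)|\le\underline{\phi}^{-N}$ holds uniformly in $\phi\in A$ (the analogous condition with ``$\forall m$'' defines $\mathcal{N}(\mathcal{E}(\Omega)^{\D_{0}})$). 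If such $A$ exists, then for each $a\in A$ the same bound restricts to $A_{\le a}\subseteq A$, giving the unified formulation with $H=1$. For the converse, fixing any $a\in A$ in the unified formulation produces a uniform bound on the restriction $A_{\le\eps_{0}}=A\cap\{\phi\in\D_{0}:\underline{\phi}\le\underline{\eps_{0}}\}$; the constant $H$ is then absorbed into a slightly larger exponent $N'\ge N$ (choose $\eps_{0}$ small enough that $\underline{\eps_{0}}^{-(N'-N)}\ge H$) in the moderate case, and is harmless in the negligible case.

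The delicate step is that the witness set $A_{\le\eps_{0}}$ produced above must actually belong to $\hat{\mathcal{I}}$, and this is where the specific construction of the ultrafilter in \cite{ToVe08} intervenes: by design, sets of the form $\{\phi\in\D_{0}:\underline{\phi}\le\delta\}$ lie in $\hat{\mathcal{I}}$ for every $\delta>0$, and the filter property then forces $A_{\le\eps_{0}}\in\hat{\mathcal{I}}$. Once this point is secured, the remainder of the argument for (iii) and (iv) is a routine bookkeeping of constants, with ``$\exists N$'' versus ``$\forall m$'' playing exactly the same role as in (i) and (ii).
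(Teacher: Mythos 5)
The paper does not actually contain a proof of this corollary; the text explicitly states that ``for all the proofs, we refer to \cite{GiNi14}'', and the corollary is stated as imported material. What the paper does supply that bears directly on the proof is Example~\ref{exa:epsSuffSmall}, parts (iii) and (iv), which record the equivalence between the abstract ``$\forall^{\mathbb{I}}\eps$'' and the classical ``for $\eps$ sufficiently small'' (for $\mathbb{I}^{\srm}$) respectively ``almost everywhere'' (for $\hat{\mathbb{I}}$). Your plan essentially re-derives those equivalences from scratch inside the corollary's proof, rather than invoking them; that is not wrong, but it duplicates scaffolding the paper already set up precisely so that Corollary~\ref{cor:specialAlgebra} would fall out as a routine quantifier-unwinding.

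The part of your argument for $\mathbb{I}^{\srm}$ is correct and is the standard unwinding. For $\hat{\mathbb{I}}$ the substance is also essentially right, but two things deserve tightening. First, the sentence ``choose $\eps_{0}$ small enough that $\underline{\eps_{0}}^{-(N'-N)}\ge H$'' misstates the move: $\eps_{0}$ is produced by the hypothesis, not chosen freely; the correct absorption is that for all $\eps$ with $\underline{\eps}$ small (a set in $\hat{\mathcal{I}}$, since $\underline{\eps}$ is infinitesimal a.e.), one has $\underline{\eps}^{-1}\ge H$, hence $H\cdot\underline{\eps}^{-N}\le\underline{\eps}^{-N-1}$; then intersect with the original witness set using the filter property. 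Second, the hard direction (abstract big-O implies ToVe a.e.\ bound) has a genuine subtlety you glide over: the constant $H_{a}$ in $O_{a,A}$ may depend on $a$, and the witness set $A_{\le\eps_{0}(a)}$ need not lie in the ultrafilter. The clean resolution is exactly Example~\ref{exa:epsSuffSmall}(iv)(a)$\Leftrightarrow$(b): fixing a single $a_{0}\in A$, the statement ``$|x_{\eps}|\le H_{a_{0}}\underline{\eps}^{-N}$ for $\eps$ sufficiently small in $A_{\le a_{0}}$'' is equivalent to the same bound holding almost everywhere, and only then does one absorb the now fixed $H_{a_{0}}$ into $N+1$. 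Your text gestures at this (``sets of the form $\{\phi:\underline{\phi}\le\delta\}$ lie in $\hat{\mathcal{I}}$'') without closing the loop, and it would be worth saying explicitly that this is what makes the witness set belong to the filter, rather than leaving it as ``by design''. With those two points repaired, your proof matches the intended argument.
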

To arrive at a similar unifying result for the full algebra, we need
the following
\begin{defn}
\label{def:P(Omega)}Let $\Omega\subseteq\R^{n}$ be an open set.
\begin{enumerate}[leftmargin=*,label=(\roman*),align=left ]
\item \foreignlanguage{british}{If $\eps\in\mathcal{A}_{0}$, then $\Omega_{\eps}:=\Omega\cap\left\{ x\in\R^{n}\mid\text{supp}(\eps)\subseteq\Omega-x\right\} $.}
\item $\mathcal{P}^{\erm}(\Omega):=\prod_{\eps\in I^{\erm}}\mathcal{C}^{\infty}(\Omega_{\eps},\R)$.
\item If $g:X\longrightarrow Z^{Y}$ is a map, then $g^{\vee}:(x,y)\in X\times Y\mapsto g(x)(y)\in Z.$
\end{enumerate}
\end{defn}
\noindent We can say that elements of $\mathcal{P}^{\erm}(\Omega)$
are $I^{\erm}$-indexed nets $(u_{\eps})$ such that $u_{\eps}\in\mathcal{C}^{\infty}(\Omega_{\eps},\R)$.
In \cite{GiWu14} it is proved that $\mathcal{P}^{\erm}(\Omega)$
is isomorphic (as diffeological space, and hence also as set) to the
usual space $\mathcal{E}^{\erm}(\Omega)$ (see \cite{GKOS}).
\begin{thm}
\label{thm:fullEquiv}Let $u=(u_{\eps})\in\mathcal{P}^{\erm}(\Omega)$,
then
\begin{enumerate}[leftmargin=*,label=(\roman*),align=left ]
\item \foreignlanguage{british}{\label{enu:fullEquiv-moderate}$u^{\vee}\in\mathcal{E}_{M}^{\erm}(\Omega)$
if and only if
\[
\forall K\Subset\Omega\,\forall\alpha\in\N^{n}\,\exists N\in\N:\ \sup_{x\in K}\left|\partial^{\alpha}u_{\eps}(x)\right|=O\left(\underline{\eps}^{-N}\right)\text{ as }\eps\in\mathbb{I}^{\erm};
\]
}
\item \label{enu:fullEquiv-negligible}$u^{\vee}\in\mathcal{N}^{\erm}(\Omega)$
if and only if
\[
\forall K\Subset\Omega\,\forall\alpha\in\N^{n}\,\forall m\in\N:\ \sup_{x\in K}\left|\partial^{\alpha}u_{\eps}(x)\right|=O\left(\underline{\eps}^{m}\right)\text{ as }\eps\in\mathbb{I}^{\erm}.
\]

\end{enumerate}
\end{thm}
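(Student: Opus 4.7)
The plan is to unpack both sides in terms of explicit quantifiers and match them up to a harmless relabelling. I will write out item \ref{enu:fullEquiv-moderate} in detail; item \ref{enu:fullEquiv-negligible} then follows by the same argument with the negative exponent $-N$ replaced by the non-negative exponent $m$.

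My preliminary observation is that the moment conditions defining $\mathcal{A}_q$ are invariant under $\phi\mapsto r\odot\phi$: a change of variables shows that for $\phi\in\mathcal{A}_q$ and $r\in(0,1]$ one also has $r\odot\phi\in\mathcal{A}_q$. Hence $(\mathcal{A}_q)_{\le\phi}=\{r\odot\phi\mid r\in(0,1]\}$ and, for $\eps_0=r_0\odot\phi$, $(\mathcal{A}_q)_{\le\eps_0}=\{r\odot\phi\mid r\in(0,r_0]\}$. Unfolding Definitions \ref{def:usualBigOh} and \ref{def:2ndBigOh}, the right-hand side of \ref{enu:fullEquiv-moderate} therefore reads
\[
\forall K\Subset\Omega\,\forall\alpha\in\N^{n}\,\exists N,q\in\N\,\forall\phi\in\mathcal{A}_q\,\exists H>0\,\exists r_0\in(0,1]\,\forall r\in(0,r_0]:\ \sup_{x\in K}\left|\partial^\alpha u_{r\odot\phi}(x)\right|\le H\,\underline{r\odot\phi}^{-N}.
\]

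I will then compare this with the standard GKOS definition. Writing $\underline{\phi}:=\mathrm{diam}(\supp\phi)$, after shrinking $r_0$ below $1/\underline{\phi}$ I will have $\underline{r\odot\phi}=r\,\underline{\phi}$, so the right-hand bound simplifies to $\sup_{x\in K}|\partial^\alpha u_{r\odot\phi}(x)|\le C\, r^{-N}$ with $C:=H\,\underline{\phi}^{-N}$---precisely the GKOS inequality, modulo the fact that the GKOS form quantifies over $\phi\in\mathcal{A}_N$ while here I have $\phi\in\mathcal{A}_q$. This mismatch I will resolve by passing to $N':=\max(N,q)$, using $\mathcal{A}_{N'}\subseteq\mathcal{A}_q$ and $r^{-N}\le r^{-N'}$ for $r\in(0,1]$. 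For the converse, starting from GKOS moderateness with parameters $(N,C,\eta)$ for a fixed $\phi\in\mathcal{A}_N$, I will set $q:=N$, $r_0:=\min\{\eta,1/\underline{\phi}\}$, $\eps_0:=r_0\odot\phi$, and $H:=C\,\underline{\phi}^N$ to recover the unified form.

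The one step that is not purely notational concerns the domain of $u_{r\odot\phi}$: by Definition \ref{def:P(Omega)} the supremum $\sup_{x\in K}|\partial^\alpha u_{r\odot\phi}(x)|$ requires $K\subseteq\Omega_{r\odot\phi}$, which holds once $r\,\underline{\phi}<d(K,\partial\Omega)$. This constraint will simply be absorbed into the smallness condition on $r_0$ throughout. The main anticipated obstacle is therefore not conceptual but bookkeeping: consistently managing the four interacting parameters $(q,N,r,\phi)$ without circularity. Once these are chosen in the order above, both implications reduce to direct verification, and item \ref{enu:fullEquiv-negligible} is obtained by the same argument.
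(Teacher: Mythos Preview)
The paper itself does not supply a proof of this theorem: the introductory paragraph of the subsection on sets of indices states ``For all the proofs, we refer to \cite{GiNi14}'', and indeed no argument appears after the statement. So there is no in-paper proof to compare against.

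That said, your proposal is sound and is exactly the natural route. The key steps---invariance of the moment conditions under $\phi\mapsto r\odot\phi$, the identification $(\mathcal{A}_q)_{\le\phi}=\{r\odot\phi\mid r\in(0,1]\}$ (which the paper itself records in Example~\ref{exa:epsSuffSmall}(v)), the rewriting of $\underline{r\odot\phi}^{-N}$ as $r^{-N}\underline{\phi}^{-N}$ for small $r$, and the merging of the two indices $N$ and $q$ via $N':=\max(N,q)$---are all correct, and the domain caveat $K\subseteq\Omega_{r\odot\phi}$ is handled appropriately by shrinking $r_0$. Two minor points worth tightening when you write it out: first, your $r_0:=\min\{\eta,1/\underline{\phi}\}$ should also be capped at $1$ (and at $d(K,\partial\Omega)/\underline{\phi}$ for the domain condition), since the definition requires $\eps_0\in A_{\le a}$; second, be explicit that in the big-$O$ definition the constant $H$ appears \emph{after} the universal quantifier over $a\in A$, so $H$ may legitimately depend on $\phi$, matching the $\phi$-dependence of $C$ in the GKOS formulation. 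With these in place the argument goes through, and part~\ref{enu:fullEquiv-negligible} is indeed identical with $-N$ replaced by $m$.
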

The same unifying and simple formulation can be used for the diffeomorphism
invariant algebra $\mathcal{G}^{\drm}$, with only one difference:
in the definition of moderate net we use a big-O relation of the type
$O_{\mathcal{J}}$ for a suitable $\mathcal{J}\subset\mathcal{I}$
(see \cite{GiNi14}). For this reason, we are strongly convinced that
the following results can be generalized also to $\mathcal{G}^{\drm}$.

\section{Asymptotic Gauges}

In this section, we are going to introduce some notions for a set
of indices which permit to define what an asymptotic gauge is.

\subsection{\label{sub:ForEpsSufficientlySmall}``For $\eps$ sufficiently small''
in a set of indices}

We start by introducing a useful notation which corresponds, in the
set of indices for the special algebra $\mathbb{I}^{\srm}$, to the
usual ``for $\eps$ sufficiently small''.
\begin{defn}
\label{def:epsSuffSmall}Let $\mathbb{I}=(I,\le,\mathcal{I})$ be
a set of indices. Let $a\in A\in\mathcal{I}$ and $\mathcal{P}(-)$
be a property, then we say
\[
\forall^{\mathbb{I}}\eps\in A_{\le a}:\ \mathcal{P}(\eps),
\]
and we read it \emph{for $\eps$ sufficiently small in $A_{\le a}$
the property $\mathcal{P}(\eps)$ holds}, if
\begin{equation}
\exists e\le a\,\forall\eps\in A_{\le e}:\ \mathcal{P}(\eps).\label{eq:defSuffSmall}
\end{equation}
Moreover, we say that
\[
\forall^{\mathbb{I}}\eps:\ \mathcal{P}(\eps),
\]
and we read it \emph{for $\eps$ sufficiently small in $\mathbb{I}$
the property $\mathcal{P}(\eps)$ holds, if $\exists A\in\mathcal{I}\,\forall a\in A\,\forall^{\mathbb{I}}\eps\in A_{\le a}:\ \mathcal{P}(\eps)$.}\end{defn}
\begin{example}
\label{exa:epsSuffSmall}\ 
\begin{enumerate}[leftmargin=*,label=(\roman*),align=left ]
\item \label{enu:equivEspSuffSmall}By condition \ref{enu:DefSoI-DonwDir}
of Def. \ref{def:setOfIndices} of set of indices, it follows that
$A_{\le e}\ne\emptyset$. Therefore, \eqref{eq:defSuffSmall} is equivalent
to
\[
\exists e\in A_{\le a}\,\forall\eps\in A_{\le e}:\ \mathcal{P}(\eps).
\]
Analogously, we can reformulate similar properties we will see below.
\item We have $x_{\eps}=O_{a,A}(y_{\eps})$ if and only if $\exists H\in\R_{>0}\,\forall^{\mathbb{I}}\eps\in A_{\le a}:\ |x_{\eps}|\le H\cdot|y_{\eps}|$.
\item In the set of indices $\mathbb{I}^{\srm}$, the following properties
are equivalent:

\begin{enumerate}
\item $\exists\eps_{0}\in I^{\srm}\,\forall\eps\in(0,\eps_{0}]:\ \mathcal{P}(\eps)$;
\item $\exists A\in\mathcal{I}^{\srm}\,\exists a\in A\,\forall^{\mathbb{I}^{\srm}}\eps\in A_{\le a}:\ \mathcal{P}(\eps)$;
\item $\exists A\in\mathcal{I}^{\srm}\,\forall a\in A\,\forall^{\mathbb{I}^{\srm}}\eps\in A_{\le a}:\ \mathcal{P}(\eps)$;
\item $\forall A\in\mathcal{I}^{\srm}\,\exists a\in A\,\forall^{\mathbb{I}^{\srm}}\eps\in A_{\le a}:\ \mathcal{P}(\eps)$.
\end{enumerate}
\item In the set of indices $\hat{\mathbb{I}}$, we recall that a property
$\mathcal{P}(\eps)$ is said to hold \emph{almost everywhere} iff
$\left\{ \eps\in\D_{0}\mid\mathcal{P}(\eps)\right\} \in\hat{\mathcal{I}}$
(see \cite{ToVe08}). Using this language, the following properties
are equivalent:

\begin{enumerate}
\item $\mathcal{P}(\eps$) holds almost everywhere;
\item $\exists A\in\hat{\mathcal{I}}\,\exists a\in A\,\forall^{\hat{\mathbb{I}}}\eps\in A_{\le a}:\ \mathcal{P}(\eps)$;
\item $\exists A\in\hat{\mathcal{I}}\,\forall a\in A\,\forall^{\hat{\mathbb{I}}}\eps\in A_{\le a}:\ \mathcal{P}(\eps)$.
\end{enumerate}
\item In the set of indices $\mathbb{I}^{\erm}$, assume that $\phi\in\mathcal{A}_{q}$,
then the following properties are equivalent

\begin{enumerate}
\item $\forall^{\mathbb{I}^{\erm}}\eps\in\left(\mathcal{A}_{q}\right)_{\le\phi}:\ \mathcal{P}(\eps)$;
\item $\exists r\in(0,1]\,\forall s\in(0,r]:\ \mathcal{P}(s\odot\phi)$.
\end{enumerate}
\end{enumerate}
\end{example}

\subsection{Order relation in a set of indices}

All the scales $(\eps^{-n})_{\eps,n}$ of the special algebra are
\emph{positive functions}. Of course, if we change the function $\eps\mapsto\eps^{-n}$,
only for $\eps>\eps_{0}$, so that it is not globally positive anymore,
this will not change anything in the definition of $\gss(\Omega)$.
In this section, we are going to define this order relation for functions
of the type $I\ra\R$.
\begin{defn}
\label{def:order}Let $\mathbb{I}=(I,\le,\mathcal{I})$ be a set of
indices, and $i$, $j:I\ra\R$ be maps. Then we say $i>_{\mathbb{I}}j$
if
\[
\forall^{\mathbb{I}}\eps:\ i_{\eps}>j_{\eps}.
\]

\end{defn}
Following the intuitive interpretation given in \cite{GiNi14}, we
can say that $i>_{\mathbb{I}}j$ if we can find an accuracy class
$A\in\mathcal{I}$ such that for each measuring instrument $a\in A$
in that class, we have $i_{\eps}>j_{\eps}$ for $\eps\in A_{\le a}$
sufficiently small.
\begin{thm}
\label{thm:propOrd}Let $\mathbb{I}=(I,\le,\mathcal{I})$ be a set
of indices, and $i$, $j$, $k$, $z:I\ra\R$ be maps. Then we have:
\begin{enumerate}[leftmargin=*,label=(\roman*),align=left ]
\item \label{enu:noRiflexOrd}$i\not>_{\mathbb{I}}i$.
\item \label{enu:transOrd}If $i>_{\mathbb{I}}j>_{\mathbb{I}}k$, then $i>_{\mathbb{I}}k$.
\item \label{enu:productOrd}If $i>_{\mathbb{I}}0$ and $k>_{\mathbb{I}}j$,
then $i\cdot k>_{\mathbb{I}}i\cdot j$.
\item \label{enu:sumOrd}If $i>_{\mathbb{I}}j$ and $k>_{\mathbb{I}}z$,
then $i+k>_{\mathbb{I}}j+z$.
\item \label{enu:orderBigO}If $\exists A\in\mathcal{I}\,\forall a\in A\,\forall^{\mathbb{I}}\eps\in A_{\le a}:\ i_{\eps}\le j_{\eps}$,
then $i_{\eps}=O(j_{\eps})$ as $\eps\in\mathbb{I}$.
\end{enumerate}
\end{thm}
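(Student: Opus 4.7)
My plan is to unpack the definition $i>_{\mathbb{I}}j\iff\forall^{\mathbb{I}}\eps:\ i_\eps>j_\eps$ into the nested quantifier form
\[
\exists A\in\mathcal{I}\,\forall a\in A\,\exists e\le a\,\forall\eps\in A_{\le e}:\ i_\eps>j_\eps,
\]
and then derive each of the five properties by direct quantifier manipulation, relying systematically on axioms \ref{enu:DefSoI-intersection} and \ref{enu:DefSoI-DonwDir} of Def. \ref{def:setOfIndices}. For (i), I would simply note that every $A_{\le e}$ is non-empty (by \ref{enu:DefSoI-DonwDir}), so the impossible inequality $i_\eps>i_\eps$ immediately rules out $i>_{\mathbb{I}}i$.

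For (ii), (iii) and (iv), my uniform strategy is: given the two hypotheses, let $A,B\in\mathcal{I}$ be the witnesses they supply, use axiom \ref{enu:DefSoI-intersection} to pick $C\in\mathcal{I}$ with $C\subseteq A\cap B$, and for each $c\in C$ instantiate both ``$\forall a$''-clauses at $c$ to get bounds $e_1,e_2\le c$ valid on $A_{\le e_1}$ and $B_{\le e_2}$. Then I invoke the strict downward directedness of $C_{\le c}$ to pick a common $c^*<e_1,e_2$ in $C$, whereupon every $\eps\in C_{\le c^*}\subseteq A_{\le e_1}\cap B_{\le e_2}$ satisfies both pointwise inequalities. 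Combining them pointwise delivers transitivity in (ii) and preservation under addition in (iv); for (iii) one first intersects further with the witnessing set of $i>_{\mathbb{I}}0$ so as to secure $i_\eps>0$, and then multiplies.

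For (v), the hypothesis is already in the quantifier shape required by ``$\forall a\in A:\ i_\eps=O_{a,A}(j_\eps)$'' with constant $H=1$, once one reads $i_\eps\le j_\eps$ between non-negative quantities (the regime of interest for the asymptotic gauges developed below); in general one enlarges $H$ to absorb a sign. Invoking Def. \ref{def:2ndBigOh} with $\mathcal{J}=\mathcal{I}$ then yields $i_\eps=O(j_\eps)$ as $\eps\in\mathbb{I}$.

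The only step I anticipate as genuinely delicate is the bookkeeping in (iii), where three separate witnesses -- one per hypothesis plus their common refinement coming from \ref{enu:DefSoI-intersection} -- must be threaded together so that the selected index $c^*$ belongs simultaneously to all three relevant downward sets, not merely lies below their upper endpoints in the ambient preorder. Everything else is a routine translation of pointwise order facts into the nested-quantifier scheme.
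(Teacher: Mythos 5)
Your approach matches the paper's: unwind the nested quantifier definition of $>_{\mathbb{I}}$, intersect witness sets via axiom \ref{enu:DefSoI-intersection}, and use the strict downward directedness from axiom \ref{enu:DefSoI-DonwDir} to pick a common tail. This is exactly the paper's scheme for (i)--(iv), and you correctly anticipate the one delicate bookkeeping point: the witnesses $e_1\in A_{\le c}$ and $e_2\in B_{\le c}$ produced by the two clauses need not lie in $C_{\le c}$, so they cannot be fed directly into the directedness of $C_{\le c}$. The repair (also left implicit in the paper, whose appeal to ``$(\emptyset,c]$ is downward directed'' is not literally one of the axioms) is first to shrink each ``for $\eps$ sufficiently small in $A_{\le c}$'' statement to the subtail $C_{\le e_1}\subseteq A_{\le e_1}$ using $C\subseteq A\cap B$ and transitivity of $\le$, and then use Example \ref{exa:epsSuffSmall}.\ref{enu:equivEspSuffSmall} applied to $C$ to replace $e_1,e_2$ by witnesses genuinely in $C_{\le c}$ before invoking its strict directedness.

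One genuine slip concerns (v). Your fallback remark that ``in general one enlarges $H$ to absorb a sign'' is false: if $i_\eps$ is allowed to be arbitrarily negative relative to $j_\eps$ (take $i_\eps=-1/\eps$, $j_\eps=0$ on $\mathbb{I}^{\srm}$), the pointwise inequality $i_\eps\le j_\eps$ holds while no finite $H$ yields $|i_\eps|\le H\cdot|j_\eps|$, which is what Def. \ref{def:usualBigOh} requires. Your first reading is the correct one: (v) must be understood with $|i_\eps|\le j_\eps$ in the hypothesis. This is exactly what is available in the paper's sole application of (v), in the proof of Lemma \ref{lem:AGabsSum}, where the triangle inequality supplies $|i+j|\le|i|+|j|$ and $|i|\le|i|+|i|$. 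Under that reading, taking $H=1$ and $\mathcal{J}=\mathcal{I}$ in Def. \ref{def:2ndBigOh} gives the claim immediately, as the paper asserts without elaboration.
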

\begin{proof}
\ref{enu:noRiflexOrd}: By contradiction, assume that $i>_{\mathbb{I}}i$,
i.e.
\begin{equation}
\exists A\in\mathcal{I}\,\forall a\in A\,\forall^{\mathbb{I}}\eps\in A_{\le a}:\ i_{\eps}>i_{\eps}.\label{eq:absNoReflex}
\end{equation}
But $\exists a\in A$ since $\emptyset\notin\mathcal{I}$. This, \eqref{eq:absNoReflex}
and \ref{enu:equivEspSuffSmall} of Example \ref{exa:epsSuffSmall}
yield that for some $e\in A_{\le a}$ we have $i_{\eps}>i_{\eps}$
for all $\eps\in A_{\le e}$. This yields the contradiction $i_{e}>i_{e}$.

\noindent \ref{enu:transOrd}: We can write the assumptions of this
claim as
\begin{align}
\exists A & \in\mathcal{I}\,\forall a\in A\,\forall^{\mathbb{I}}\eps\in A_{\le a}:\ i_{\eps}>j_{\eps}\label{eq:iGreater-j}\\
\exists B & \in\mathcal{I}\,\forall b\in B\,\forall^{\mathbb{I}}\eps\in B_{\le b}:\ j_{\eps}>k_{\eps}.\label{eq:jGreater-k}
\end{align}
By \ref{enu:DefSoI-intersection} we get the existence of $C\in\mathcal{I}$
which is contained in $A\cap B$, i.e.\ where both \eqref{eq:iGreater-j}
and \eqref{eq:jGreater-k} hold. Fix a generic $c\in C$. Both the
relations $i_{\eps}>j_{\eps}$ and $j_{\eps}>k_{\eps}$ hold for $\eps$
sufficiently small in $A_{\le c}$ and $B_{\le c}$ respectively.
Hence
\begin{align}
\exists e' & \le c\,\forall\eps\in A_{\le e'}:\ i_{\eps}>j_{\eps}\label{eq:iGreater-jSuffSmall}\\
\exists e'' & \le c\,\forall\eps\in A_{\le e''}:\ j_{\eps}>k_{\eps}.\label{eq:jGreater-kSuffSmall}
\end{align}
But $(\emptyset,c]$ is downward directed so that there exists $e\le c$
such that $e<e'$ and $e<e''$. For each $\eps\in C_{\le e}$, from
\eqref{eq:iGreater-jSuffSmall} and \eqref{eq:jGreater-kSuffSmall}
we hence get the conclusion $i_{\eps}>j_{\eps}>k_{\eps}$.

\noindent Properties \ref{enu:productOrd} and \ref{enu:sumOrd} can
be proved analogously. Property \ref{enu:orderBigO} also follows
directly from the definitions.\end{proof}
\begin{example}
\label{exa:orderInSoI}\ 
\begin{enumerate}[leftmargin=*,label=(\roman*),align=left ]
\item In the set of indices $\mathbb{I}^{\srm}$, we have $i>_{\mathbb{I}^{\erm}}j$
if and only if $i_{\eps}>j_{\eps}$ for $\eps$ sufficiently small.
\item In the set of indices $\mathbb{I}^{\erm}$ we have $i>_{\mathbb{I}^{\erm}}j$
if and only if there exists $q\in\N$ such that for each $\phi\in\mathcal{A}_{q}$
we have $i(\eps\odot\phi)>j(\eps\odot\phi)$ for $\eps\in(0,1]$ sufficiently
small.
\item In the set of indices $\hat{\mathbb{I}}$ we have $i>_{\hat{\mathbb{I}}}j$
if and only if $i_{\eps}>j_{\eps}$ almost everywhere.
\end{enumerate}
\end{example}

\subsection{Limits in a set of indices}

All the scales $(\eps^{-n})_{\eps,n}$ of the special algebra have
limit $+\infty$ for $\eps\to0^{+}$. In this section, we want to
define the notion of limit in a set of indices for functions of the
type $I\ra\R$. This notion can be easily generalized to generic $f:I\ra T$,
where $T$ is a topological space.
\begin{defn}
\label{def:limit}Let $\mathbb{I}=(I,\le,\mathcal{I})$ be a set of
indices, $f:I\ra\R$ a map, and $l\in\R\cup\{+\infty,-\infty,\infty\}$.
Then we say that \emph{$l$ is the limit of $f$ in $\mathbb{I}$}
if
\begin{equation}
\exists A\in\mathcal{I}\,\forall a\in A:\ l=\lim_{\eps\le a}f|_{A}(\eps),\label{eq:limitDef}
\end{equation}
where the limit \eqref{eq:limitDef} is taken in the downward directed
set $(\emptyset,a]$.\end{defn}
\begin{rem*}
\label{rem:firstPropLimit}\ 
\begin{enumerate}[leftmargin=*,label=(\roman*),align=left ]
\item Writing $\lim_{\eps\le a}f|_{A}(\eps)$ we mean (in case $l$ is
finite)
\begin{equation}
\forall r\in\R_{>0}\,\exists a_{0}\le a\,\forall\eps\in A:\ \eps\le a_{0}\then\left|l-f_{\eps}\right|<r,\label{eq:limitDefExpl}
\end{equation}
that is
\begin{equation}
\forall r\in\R_{>0}\,\forall^{\mathbb{I}}\eps\in A_{\le a}:\ \left|l-f_{\eps}\right|<r.\label{eq:limitDefWithEpsSmall}
\end{equation}
From this, the following properties easily follow

\begin{itemize}
\item If $l=\lim_{\eps\le a}f|_{A}(\eps)$ and $B\subseteq A$, $B\in\mathcal{I}$,
then $l=\lim_{\eps\le a}f|_{B}(\eps)$.
\item There exists at most one $l$ verifying \eqref{eq:limitDefExpl}.
\end{itemize}
\item Let us assume that $l_{1}$ and $l_{2}$ are both limits of $f$ in
$\mathbb{I}$. So, for some $A$, $B\in\mathcal{I}$ we have $l_{1}=\lim_{\eps\le a}f|_{A}(\eps)$
for all $e\in A$, and $l_{2}=\lim_{\eps\le e}f|_{B}(\eps)$ for all
$e\in B$. We can always find $C\in\mathcal{I}$ and $e\in C$ such
that $C\subseteq A\cap B$, so that $l_{1}=\lim_{\eps\le e}f|_{C}(\eps)=l_{2}$.
Therefore, if this limit exists, it is unique and we can use the notation
\[
l=\lim_{\mathbb{I}}f=\lim_{\eps\in\mathbb{I}}f_{\eps}.
\]

\end{enumerate}
\end{rem*}
\begin{example}
\label{exa:limits}\ 
\begin{enumerate}[leftmargin=*,label=(\roman*),align=left ]
\item In the set of indices $\mathbb{I}^{\srm}$, we have $l=\lim_{\mathbb{I}^{\srm}}f$
if and only if $l=\lim_{\eps\to0^{+}}f_{\eps}$.
\item In the set of indices $\mathbb{I}^{\erm}$, we have $l=\lim_{\mathbb{I}^{\erm}}f$
if and only if there exists $q\in\N$ such that for each $\phi\in\mathcal{A}_{q}$
we have $l=\lim_{\eps\to0^{+}}f(\eps\odot\phi)$.
\item In the set of indices $\hat{\mathbb{I}}$, assume that $\hat{\mathcal{I}}$
is a P-point (\cite{CKKR}), and denote by $\hyperR$ the hyperreals
constructed as the ultrapower $\R^{I}/\hat{\mathcal{I}}$. Then we
have

\begin{enumerate}
\item $\exists\lim_{\hat{\mathbb{I}}}f=l\in\R$ if and only if $f$ is finite
and $l$ is the standard part of $[f]_{\hat{\mathcal{I}}}\in\hyperR$.
\item $\exists\lim_{\hat{\mathbb{I}}}f=+\infty$ if and only if $f>0$ and
$[f]_{\hat{\mathcal{I}}}$ is infinite.
\item $\exists\lim_{\hat{\mathbb{I}}}f=-\infty$ if and only if $f<0$ and
$[f]_{\hat{\mathcal{I}}}$ is infinite.
\item $\exists\lim_{\hat{\mathbb{I}}}f=\infty$ if and only if $[\left|f\right|]_{\hat{\mathcal{I}}}$
is infinite.
\end{enumerate}
\end{enumerate}
\end{example}
An expected result is the following
\begin{thm}
\label{thm:limitProp}Let $\mathbb{I}=(I,\le,\mathcal{I})$ be a set
of indices and $f$, $g$, $h:I\ra\R$ be maps such that the limits
$\lim_{\mathbb{I}}f$, $\lim_{\mathbb{I}}g$ exists and are finite.
Then
\begin{enumerate}[leftmargin=*,label=(\roman*),align=left ]
\item $\exists\lim_{\mathbb{I}}(f+g)=\lim_{\mathbb{I}}f+\lim_{\mathbb{I}}g$.
\item $\exists\lim_{\mathbb{I}}(f\cdot g)=\lim_{\mathbb{I}}f\cdot\lim_{\mathbb{I}}g$.
\item $\forall r\in\R:\ \exists\lim_{\mathbb{I}}r=r$.
\item If $\lim_{\mathbb{I}}g\ne0$, then $\exists\lim_{\mathbb{I}}\frac{f}{g}=\frac{\lim_{\mathbb{I}}f}{\lim_{\mathbb{I}}g}$.
\item If $f<_{\mathbb{I}}h<_{\mathbb{I}}g$ and $\lim_{\mathbb{I}}f=\lim_{\mathbb{I}}g=:l$,
then $\exists\lim_{\mathbb{I}}h=l$.
\item \label{enu:limitOrder}If $\exists\lim_{\mathbb{I}}f>0$, then $f>_{\mathbb{I}}0$.
\end{enumerate}
\end{thm}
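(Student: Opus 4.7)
The plan is to mimic the standard real-analysis proofs of the limit laws, while carefully handling the quantifier ``$\exists A\in\mathcal{I}\,\forall a\in A$'' appearing in Def.~\ref{def:limit}. By the remark after Def.~\ref{def:limit}, if $l_{f}:=\lim_{\mathbb{I}}f$ then
\[
\exists A\in\mathcal{I}\,\forall a\in A\,\forall r\in\R_{>0}\,\forall^{\mathbb{I}}\eps\in A_{\le a}:\ |l_{f}-f_{\eps}|<r,
\]
and analogously for $g$ with a witness set $B\in\mathcal{I}$.

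First, I apply condition \ref{enu:DefSoI-intersection} of Def.~\ref{def:setOfIndices} to obtain $C\in\mathcal{I}$ with $C\subseteq A\cap B$; the first bullet of the remark after Def.~\ref{def:limit} guarantees that both convergence statements survive the restriction to $C$. For a fixed $c\in C$ and a given $r>0$, there exist $e',e''\le c$ with $|l_{f}-f_{\eps}|<r$ on $C_{\le e'}$ and $|l_{g}-g_{\eps}|<r$ on $C_{\le e''}$; by condition \ref{enu:DefSoI-DonwDir} of Def.~\ref{def:setOfIndices}, we can select $e\in C_{\le c}$ with $e<e'$ and $e<e''$, on which both estimates hold simultaneously. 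From here, statements (i)--(iv) follow by the classical $r/2$-type estimates; for (iv) one first uses the same lower-bound argument as in (vi) (applied to $|g|$) to keep $|g_{\eps}|$ bounded away from $0$ on a cofinal $C_{\le e}$. Statement (iii) is immediate, since for the constant net the difference $|l-f_{\eps}|$ is identically $0$.

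For (v), I rewrite $f<_{\mathbb{I}}h<_{\mathbb{I}}g$ via Def.~\ref{def:order} and combine the resulting witness sets with those of $f$ and $g$ using \ref{enu:DefSoI-intersection}; then, invoking \ref{enu:DefSoI-DonwDir} to align the (finitely many) ``sufficiently small'' thresholds, I find $e\in C_{\le c}$ on which $|f_{\eps}-l|<r$, $|g_{\eps}-l|<r$ and $f_{\eps}<h_{\eps}<g_{\eps}$ hold together. The chain $l-r<f_{\eps}<h_{\eps}<g_{\eps}<l+r$ then yields $\lim_{\mathbb{I}}h=l$. For (vi), set $l:=\lim_{\mathbb{I}}f>0$ and apply the definition with $r:=l/2$: there exists $A\in\mathcal{I}$ such that for every $a\in A$ the inequality $|l-f_{\eps}|<l/2$ holds $\forall^{\mathbb{I}}\eps\in A_{\le a}$, whence $f_{\eps}>l/2>0$; this is precisely $f>_{\mathbb{I}}0$ in the sense of Def.~\ref{def:order}.

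The main obstacle is organizational rather than analytic: one must carefully track the ``$\exists A\in\mathcal{I}\,\forall a\in A$'' witness coming from each hypothesis, collapse these into a single $C\in\mathcal{I}$ via the filter-base axiom \ref{enu:DefSoI-intersection}, and then use the downward directedness \ref{enu:DefSoI-DonwDir} of each $(\emptyset,c]$ to align the various ``sufficiently small'' thresholds \emph{before} any real-analytic estimate is applied. Once this bookkeeping is in place, the rest is the classical argument, so no genuinely new idea is required.
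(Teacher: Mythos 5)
Your proof is correct and coincides with the paper's approach. The authors' own proof is a one-sentence remark that the result is a direct consequence of Def.\ \ref{def:limit} and property \ref{enu:DefSoI-intersection} of Def.\ \ref{def:setOfIndices}; your careful bookkeeping (passing to a common refinement $C$ via the filter-base intersection axiom, aligning the finitely many thresholds via downward directedness \ref{enu:DefSoI-DonwDir}, and then applying the classical estimates) is exactly the expansion they leave to the reader.
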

The proof is a direct consequence of our definition of limit and,
as usual, property \ref{enu:DefSoI-intersection} of Def. \ref{def:setOfIndices}.

\subsection{\label{sub:AsymptoticGauges}Asymptotic gauges}

Asymptotic gauges represent our definition of scale for Colombeau
like algebras. The idea is, essentially, to ask the \emph{asymptotic}
closure of $\mathcal{B}\subseteq\R^{I}$ with respect to algebraic
operations.
\begin{defn}
\label{def:asymptoticGauge}Let $\mathbb{I}=(I,\le,\mathcal{I})$
be a set of indices. All the big-O in this definition have to be meant
as $O_{\mathbb{I}}$ (see Def. \ref{def:2ndBigOh}). Then we say that
$\mathcal{B}$ is an \emph{asymptotic gauge on $\mathbb{I}$} (briefly:
AG on $\mathbb{I}$) if
\begin{enumerate}[leftmargin=*,label=(\roman*),align=left ]
\item \label{enu:AGnets}$\mathcal{B}\subseteq\R^{I}$;
\item \label{enu:AGinfinite}$\exists i\in\mathcal{B}:\ \lim_{\mathbb{I}}i=\infty$;
\item \label{enu:AGProduct}$\forall i,j\in\mathcal{B}\,\exists p\in\mathcal{B}:\ i\cdot j=O(p)$;
\item \label{enu:AGproductScalar}$\forall i\in\mathcal{B}\,\forall r\in\R\,\exists\sigma\in\mathcal{B}:\ r\cdot i=O(\sigma)$;
\item \label{enu:AGabsSum}$\forall i,j\in\B\,\exists s\in\B:\ s>_{\mathbb{I}}0\ ,\ |i|+|j|=O(s)$.
\end{enumerate}

Moreover, we say that:
\begin{itemize}
\item $\mathcal{B}>_{\mathbb{I}}0$, and we read it as $\mathcal{B}$ \emph{is
positive}, if $i>_{\mathbb{I}}0$ for each $i\in\mathcal{B}$;
\item $\mathcal{B}$ \emph{is totally ordered} if for all $i$, $j\in\mathcal{B}$
either $i=O(j)$ or $j=O(i)$;
\item $\B_{>0}:=\{i\in\B\mid i>_{\mathbb{I}}0\}$.
\end{itemize}
\end{defn}
Of course, any solid subalgebra (see \cite[Def. 9]{Has11}) of $\R^{I}$
containing at least an infinite net is a trivial asymptotic gauge.
To include this case, we only asked an existence in \ref{enu:AGinfinite}
of the previous definition.

The name \emph{gauge} gives the idea that using the infinities of
$\mathcal{B}$, we are going to define moderate nets for our Colombeau-like
algebras. To define negligible nets, we can use $b^{-1}$ for all
$b\in\mathcal{B}$ or another asymptotic gauge $\mathcal{Z}$ ``at
least as strong as $\mathcal{B}$''.

The first example corresponds, of course, to the special algebra and
so it starts from the set of indices $\mathbb{I}^{\srm}$ and it is
defined as $\mathcal{B}^{\srm}:=\left\{ \left(\eps^{-a}\right)\mid a\in\R_{>0}\right\} $.
This AG is positive and totally ordered.

Property \ref{enu:AGabsSum} of Def. \ref{def:asymptoticGauge} is
equivalent to ask for the asymptotic closure of $\B$ with respect
to sum and to absolute value, as it is stated in the following
\begin{lem}
\label{lem:AGabsSum}Let $\B\subseteq\R^{I}$, then property \ref{enu:AGabsSum}
of Def. \ref{def:asymptoticGauge} is equivalent to
\begin{enumerate}[leftmargin=*,label=(\roman*),align=left ]
\item \label{enu:AGsum}$\forall i,j\in\B\,\exists s\in\B:\ i+j=O(s)$;
\item \label{enu:AGabs}$\forall i\in\B\,\exists a\in\B:\ a>_{\mathbb{I}}0\ ,\ i=O(a)$.
\end{enumerate}
\end{lem}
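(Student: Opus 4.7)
My plan is to prove the two implications separately, exploiting the algebraic rules for $O_{\mathbb{I}}$ collected in Theorem \ref{thm:2ndBigOh-prop}, together with the fact that $x_{\eps}=O(y_{\eps})$ iff $x_{\eps}=O(|y_{\eps}|)$, which is immediate from Definition \ref{def:2ndBigOh}.

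\medskip

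\textbf{From (v) to (i) and (ii).} This direction is the easy one. Given $i,j\in\B$, apply (v) to obtain $s\in\B$ with $s>_{\mathbb{I}}0$ and $|i|+|j|=O(s)$. Since $|i+j|\le|i|+|j|$ pointwise, we immediately get $i+j=O(s)$, which is (i). For (ii), given $i\in\B$, apply (v) to the pair $(i,i)$: we obtain $a\in\B$ with $a>_{\mathbb{I}}0$ and $2|i|=|i|+|i|=O(a)$. By item \ref{enu:2ndBigOh-prodScal1} of Theorem \ref{thm:2ndBigOh-prop}, $O(a)=O(2\cdot a)$ is the same as $O(a)$, hence $|i|=O(a)$, i.e.\ $i=O(a)$.

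\medskip

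\textbf{From (i) and (ii) to (v).} Fix $i,j\in\B$. First apply (ii) to each of them to obtain $a_{i},a_{j}\in\B$ with $a_{i}>_{\mathbb{I}}0$, $a_{j}>_{\mathbb{I}}0$, $i=O(a_{i})$ and $j=O(a_{j})$. By item \ref{enu:2ndBigOh-sum} of Theorem \ref{thm:2ndBigOh-prop} we then have
\[
|i|+|j|=O(a_{i})+O(a_{j})=O\bigl(|a_{i}|+|a_{j}|\bigr).
\]
Since $a_{i}>_{\mathbb{I}}0$ and $a_{j}>_{\mathbb{I}}0$, we have $|a_{i}|=a_{i}$ and $|a_{j}|=a_{j}$ for $\eps$ sufficiently small in $\mathbb{I}$, so by Theorem \ref{thm:propOrd}\ref{enu:orderBigO} the net $|a_{i}|+|a_{j}|$ is $O(a_{i}+a_{j})$. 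Now apply (i) to the pair $(a_{i},a_{j})\in\B$ to obtain $s\in\B$ with $a_{i}+a_{j}=O(s)$. Chaining these big-O relations via item \ref{enu:2ndBigOh-trans} of Theorem \ref{thm:2ndBigOh-prop} yields $|i|+|j|=O(s)$.

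The only missing piece is the positivity of the majorant. For this, apply (ii) once more to $s\in\B$ to obtain $\bar{s}\in\B$ with $\bar{s}>_{\mathbb{I}}0$ and $s=O(\bar{s})$. Another application of transitivity gives $|i|+|j|=O(\bar{s})$, and $\bar{s}$ is the required element of $\B$, proving (v). The only point that requires some care in this argument is the passage from $O(|a_{i}|+|a_{j}|)$ to $O(a_{i}+a_{j})$, where one must use that $a_{i},a_{j}$ are \emph{eventually} positive in the sense of Definition \ref{def:order}, and that the two big-O relations are compatible with the filter-base structure of $\mathcal{I}$; this is exactly what Theorem \ref{thm:propOrd}\ref{enu:orderBigO} delivers.
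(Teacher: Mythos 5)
Your proof is correct and follows essentially the same route as the paper's: use $|i+j|\le|i|+|j|$ to get (i) from (v), apply (v) to the pair $(i,i)$ for (ii), and for the converse apply (ii) to each of $i,j$, then (i) to the resulting positive majorants, then (ii) once more to replace the final majorant by a positive one. The only (harmless) wrinkle is in the passage from $2|i|=O(a)$ to $|i|=O(a)$, where the cleanest reference would be Theorem \ref{thm:2ndBigOh-prop}\ref{enu:2ndBigOh-prodScal2} (or simply $|i|\le 2|i|$ together with Theorem \ref{thm:propOrd}\ref{enu:orderBigO}) rather than \ref{enu:2ndBigOh-prodScal1}, but this does not affect the argument.
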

\begin{proof}
That Def. \ref{def:asymptoticGauge}.\ref{enu:AGabsSum} is sufficient
follows from $i+j\le|i|+|j|$, $i\le|i|+|i|$ and Thm. \ref{thm:propOrd}.\ref{enu:orderBigO}.
The condition is also necessary: if $i$, $j\in\B$, then from \ref{enu:AGabs}
we get $a$, $b\in\B_{>0}$ such that $i=O(a)$ and $j=O(b)$; from
\ref{enu:AGsum} we have the existence of $s\in\B$ such that $a+b=O(s)$.
Once again from \ref{enu:AGabs}, we can assume $s>_{\mathbb{I}}0$.
Thus $|i|+|j|=O(|a|+|b|)=O(a+b)=O(s)$.
\end{proof}
A general way to obtain an asymptotic gauge on a generic set of indices
$\mathbb{I}$ is to find a map $\rho:I\ra(0,1]$ such that $\lim_{\mathbb{I}}\rho=0$
and to take the composition of $\rho$ with the nets of an asymptotic
gauge $\mathcal{B}$ on $\mathbb{I}^{\srm}$, i.e.
\[
\mathcal{B}\circ\rho:=\left\{ i\circ\rho\mid i\in\mathcal{B}\right\} .
\]
For example, for the set of indices $\mathbb{I}^{\erm}$ we can consider
\begin{equation}
\rho(\phi):=\min\left\{ \text{diam}\left(\text{supp}\phi\right),1\right\} \quad\forall\phi\in I^{\erm}=\mathcal{A}_{0}.\label{eq:defRhoInf}
\end{equation}
An analogue function $\rho$ can be defined for the set of indices
$\hat{\mathbb{I}}$ and in both cases they have limit zero. Several
of the definitions we have introduced so far are motivated by the
wish to obtain the following
\begin{thm}
\label{thm:AGfromInfinitesRho}Let $\mathbb{I}=(I,\le,\mathcal{I})$
be a set of indices, and $\rho:I\ra(0,1]$ be a map such that $\lim_{\mathbb{I}}\rho=0$.
Let $\mathcal{B}$ be an asymptotic gauge on $\mathbb{I}^{\srm}$,
then
\begin{enumerate}[leftmargin=*,label=(\roman*),align=left ]
\item \label{enu:BcompRhoAG}$\mathcal{B}\circ\rho$ is an asymptotic gauge
on $\mathbb{I}$.
\item \label{enu:BcompRhoPositive}If $\mathcal{B}>_{\mathbb{I}^{\srm}}0$
then $\mathcal{B}\circ\rho>_{\mathbb{I}}0$.
\item \label{enu:BcompRhoTotOrd}If $\mathcal{B}$ is totally ordered, then
also $\mathcal{B}\circ\rho$ is totally ordered.
\end{enumerate}
\end{thm}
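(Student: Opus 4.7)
The overall strategy is to prove a single transfer principle: if $\lim_{\mathbb{I}}\rho=0$, then any asymptotic relation ($O$, $>_{\mathbb{I}^{\srm}} 0$, $\lim=\infty$) that holds for nets indexed by $\mathbb{I}^{\srm}$ transfers to the corresponding relation, indexed by $\mathbb{I}$, for the composed nets. Concretely, the key lemma I would prove first is: if $(x_s),(y_s)\in\R^{(0,1]}$ satisfy $x_s=O_{\mathbb{I}^{\srm}}(y_s)$, then $x_{\rho(\eps)}=O_{\mathbb{I}}(y_{\rho(\eps)})$, and analogously for $>_{\mathbb{I}^{\srm}} 0$ and for $\lim_{\mathbb{I}^{\srm}}=\infty$. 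The argument is uniform: unfold the hypothesis as $\exists s_0\in(0,1]\,\forall s\in(0,s_0]:\mathcal{P}(s)$, then use $\lim_{\mathbb{I}}\rho=0$ to produce, inside the witnessing $A\in\mathcal{I}$ for this limit, an $e\le a$ (for any $a\in A$) with $\rho(\eps)\le s_0$ on $A_{\le e}$. Thus $\mathcal{P}(\rho(\eps))$ holds $\forall^{\mathbb{I}}\eps\in A_{\le a}$, which is precisely the witness needed for the target relation on $\mathbb{I}$. This lets me reuse the same set of indices $A$ for all $a\in A$, matching the structure of Def.~\ref{def:2ndBigOh} and Def.~\ref{def:limit}.

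Granting this transfer lemma, part~\ref{enu:BcompRhoAG} is then a checklist against Def.~\ref{def:asymptoticGauge}. Property \ref{enu:AGnets} is immediate from $\rho:I\to(0,1]\subseteq\R$. For \ref{enu:AGinfinite}, pick $i\in\mathcal{B}$ with $\lim_{\mathbb{I}^{\srm}}i=\infty$; the transfer lemma gives $\lim_{\mathbb{I}}i\circ\rho=\infty$. For \ref{enu:AGProduct}, given $i,j\in\mathcal{B}$ take $p\in\mathcal{B}$ with $i\cdot j=O(p)$ in $\mathbb{I}^{\srm}$; since $(i\circ\rho)\cdot(j\circ\rho)=(i\cdot j)\circ\rho$, the transfer lemma gives $(i\circ\rho)\cdot(j\circ\rho)=O(p\circ\rho)$ as $\eps\in\mathbb{I}$, and $p\circ\rho\in\mathcal{B}\circ\rho$. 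Properties \ref{enu:AGproductScalar} and \ref{enu:AGabsSum} work the same way, using in the latter case that $>_{\mathbb{I}^{\srm}} 0$ transfers to $>_{\mathbb{I}} 0$; by Lem.~\ref{lem:AGabsSum} it suffices to check closure under sum and absolute value, again by direct transfer.

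Parts \ref{enu:BcompRhoPositive} and \ref{enu:BcompRhoTotOrd} are immediate corollaries of the same transfer lemma. If every $i\in\mathcal{B}$ satisfies $i>_{\mathbb{I}^{\srm}} 0$, then $i\circ\rho>_{\mathbb{I}} 0$ for every $i\in\mathcal{B}$. If for $i,j\in\mathcal{B}$ one of $i=O(j)$ or $j=O(i)$ holds as $\eps\in\mathbb{I}^{\srm}$, the same dichotomy transfers to $i\circ\rho$ and $j\circ\rho$ as $\eps\in\mathbb{I}$.

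The only mildly delicate point is being careful about which set $A\in\mathcal{I}$ witnesses each asymptotic statement, since Def.~\ref{def:2ndBigOh} and Def.~\ref{def:limit} quantify "$\forall a\in A$" and the $s_0$ produced in $\mathbb{I}^{\srm}$ is not a priori tied to $a$. The fix, implicit in the argument above, is to fix once and for all an $A\in\mathcal{I}$ witnessing $\lim_{\mathbb{I}}\rho=0$ and reuse it in every transferred statement: for each $a\in A$ we independently pick the appropriate $e\le a$ inside $A_{\le a}$. Thus I do not need to invoke condition \ref{enu:DefSoI-intersection} of Def.~\ref{def:setOfIndices} beyond its role in giving us the fixed $A$ from the limit, and the proof stays entirely at the level of the unfolded $\forall^{\mathbb{I}}$ quantifier of Def.~\ref{def:epsSuffSmall}.
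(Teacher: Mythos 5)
Your proposal is correct and follows essentially the same approach as the paper: fix the $A\in\mathcal{I}$ witnessing $\lim_{\mathbb{I}}\rho=0$, and for each $a\in A$ use the $s_0$ from the $\mathbb{I}^{\srm}$-side hypothesis to produce, via the limit of $\rho$, the required $e\le a$ on which the composed property holds. The paper carries out this transfer inline for each clause of Def.~\ref{def:asymptoticGauge}, whereas you isolate it once as an explicit transfer lemma and then apply it as a checklist; this is a cleaner packaging, but the underlying mechanism is identical.
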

\begin{proof}
Property \ref{enu:AGnets} of Def. \ref{def:asymptoticGauge} is clear.
To prove \ref{enu:AGinfinite} of Def. \ref{def:asymptoticGauge},
assume that $i\in\mathcal{B}$ is such that $\lim_{r\to0^{+}}i_{r}=\infty$.
Since $\lim_{\mathbb{I}}\rho=0$, we get
\begin{equation}
\exists A\in\mathcal{I}\,\forall a\in A:\ 0=\lim_{\eps\le a}\rho|_{A}(\eps).\label{eq:limitRhoExplicit}
\end{equation}
For each $R\in\R_{>0}$ there exists $\delta\in\R_{>0}$ such that
$|i_{r}|>R$ for $r\in(0,\delta]$. But for each $a\in A$, \ref{eq:limitRhoExplicit}
yields
\[
\exists\eps_{0}\le e\,\forall\eps\in A:\ \eps\le\eps_{0}\then|\rho_{\eps}|<\delta.
\]
Therefore, $|i(\rho_{\eps})|>r$ for the same $\eps\in A_{\le\eps_{0}}$.
This proves that $\lim_{\mathbb{I}}i(\rho_{\eps})=\infty$.

Take $i$, $j\in\mathcal{B}$. We want to prove that both $\left|i\circ\rho\right|+\left|j\circ\rho\right|$
and $(i\circ\rho)\cdot(j\circ\rho)$ are asymptotically in $\mathcal{B}\circ\rho$.
From the analogous property of the AG $\mathcal{B}$, we get the existence
of $\sigma$, $\pi\in\mathcal{B}$ such that $|i|+|j|=O_{\mathbb{I}^{\srm}}(\sigma)$
and $i\cdot j=O_{\mathbb{I}^{\srm}}(\pi)$, that is $||i_{r}|+|j_{r}||\le H\cdot|\sigma_{r}|$
and $|i_{r}\cdot j_{r}|\le K\cdot|\pi_{r}|$ for some $H$, $K\in\R_{>0}$
and for each $r\in(0,r_{0}]$. It suffices to take $A$ as in \ref{eq:limitRhoExplicit}
and $a\in A$ to have $|\rho_{\eps}|<r_{0}$ for each $\eps\in A_{\le\eps_{1}}$,
for a suitable $\eps_{1}\le a$. For all these $\eps\in A_{\le\eps_{1}}$
we hence get $\left|\left|i(\rho_{\eps})\right|+\left|j(\rho_{\eps})\right|\right|\le H\cdot|\sigma(\rho_{\eps})|$
and $|i(\rho_{\eps})\cdot j(\rho_{\eps})|\le K\cdot|\pi(\rho_{\eps})|$,
which is our conclusion. Analogously, we can prove the asymptotic
closure with respect to the product by scalars.

To prove \ref{enu:BcompRhoPositive}, assume that $i\in\mathcal{B}>_{\mathbb{I}^{\srm}}0$,
so that $i_{r}>0$ for $r\in(0,r_{0}]$. From \eqref{eq:limitRhoExplicit}
for each $a\in A$ we have that $\left|\rho_{\eps}\right|<r_{0}$
for all $\eps\in A_{\le\eps_{1}}$ and for some $\eps_{1}\le a$.
Therefore $i(\rho_{\eps})>0$ for the same $\eps$. This proves that
$i(\rho_{\eps})>0$ $\forall^{\mathbb{I}}\eps\in A_{\le a}$, and
hence also that $i\circ\rho>_{\mathbb{I}}0$.

Finally, to prove \ref{enu:BcompRhoTotOrd}, take $i$, $j\in\mathcal{B}$
such that $i_{r}=O_{\mathbb{I}^{\srm}}(j_{r})$ as $r\in\mathbb{I}^{\srm}$.
We want to prove that $i(\rho_{\eps})=O_{\mathbb{I}}(j(\rho_{\eps}))$
as $\eps\in\mathbb{I}$. Assume that $|i_{r}|\le H\cdot|j_{r}|$ for
$r\in(0,r_{0}]$. The limit relation \eqref{eq:limitRhoExplicit}
yields the existence of $A\in\mathcal{I}$ such that for each $a\in A$
we have
\[
\exists\eps_{0}\le a\,\forall\eps\in A_{\le\eps_{0}}:\ |\rho_{\eps}|<r_{0},
\]
and thus $|i(\rho_{\eps})|\le H\cdot|j(\rho_{\eps})|$ for the same
$\eps$. By \ref{enu:DefSoI-DonwDir} of Def. \ref{def:setOfIndices}
of set of indices, we get $A_{\le\eps_{0}}\ne\emptyset$ which yields
\[
\exists\eps_{1}\in A_{\le a}\,\forall\eps\in A_{\le\eps_{1}}:\ |i(\rho_{\eps})|\le H\cdot|j(\rho_{\eps})|,
\]
which proves that $i(\rho_{\eps})=O_{a,A}(j(\rho_{\eps}))$, that
is our conclusion.\end{proof}
\begin{example}
\label{exa:AG}\ 
\begin{enumerate}[leftmargin=*,label=(\roman*),align=left ]
\item Let $\rho$ be the function defined in \eqref{eq:defRhoInf}, then
$\left\{ \left(\rho_{\eps}^{-a}\right)\mid a\in\R_{>0}\right\} $
is a totally ordered asymptotic gauge of positive functions on $\mathbb{I}^{\erm}$.
In the same way, we can proceed for $\hat{\mathbb{I}}$.
\item Define $\text{exp}(x):=e^{x}$ for $x\in\R$, and $\text{exp}^{k}:=\exp\circ\ptind^{k}\circ\text{ exp}$,
then $\mathcal{B}_{\text{fin}}^{\text{exp}}:=\left\{ \text{exp}^{k}\left(\frac{1}{\eps}\right)\mid k\in\N_{\ne0}\right\} $
and $\mathcal{B}_{\infty}^{\text{exp}}:=\left\{ \left(\text{exp}^{[\eps^{-1}]}(\eps^{-1})^{a}\right)\mid a\in\R_{>0}\right\} $,
where $[x]$ is the integer part of $x\in\R$, are totally ordered
asymptotic gauges of positive functions.
\item Assuming that $\hat{\mathcal{I}}$ is a P-point on $\hat{I}=\D(\R^{d})$,
then the condition $\rho:\hat{I}\ra\R_{>0}$ and $\lim_{\hat{\mathbb{I}}}\rho=0$
are equivalent to say that $[\rho]_{\hat{\mathcal{I}}}\in\hyperR=\R^{\hat{I}}/\hat{\mathcal{I}}$
is infinitesimal. Therefore, Thm. \ref{thm:AGfromInfinitesRho} gives
that $\hat{\mathcal{B}}_{\rho}:=\{\rho^{-a}\mid a\in\R_{>0}\}$, $\hat{\mathcal{B}}_{\text{fin}}^{\text{exp}}:=\{\text{exp}^{k}(\rho^{-1})\mid k\in\N_{\ne0}\}$
and $\hat{\mathcal{B}}_{\infty}^{\text{exp}}:=\left\{ \text{exp}^{[\rho^{-1}]}(\rho^{-1})^{a}\mid a\in\R_{>0}\right\} $
are totally ordered asymptotic gauges of positive functions on $\hat{\mathbb{I}}$.
\end{enumerate}
\end{example}
\begin{defn}
Let $\B$ be an AG on the set of indices $\mathbb{I}=(I,\le,\mathcal{I})$.
The set of \emph{moderate nets generated by} \emph{$\B$} is 

\[
\R_{M}(\B):=\left\{ x\in\R^{I}\mid\exists b\in\B:\ x_{\eps}=O(b_{\eps})\text{ as }\eps\in\mathbb{I}\right\} .
\]

\end{defn}
It is immediate to see that $\B\subseteq\R_{M}(\B)$. Let us introduce
also the following definition:
\begin{defn}
We say that an AG $\B$ is an \emph{asymptotically closed ring} if
\begin{equation}
\B=\R_{M}(\B).\label{eq:asymptClosed}
\end{equation}

\end{defn}
The following holds:
\begin{thm}
\label{thm:acr} If $\B$ is an AG then $\R_{M}(\B)$ is the minimal
(with respect to inclusion) asymptotically closed solid ring containing
$\B$.\end{thm}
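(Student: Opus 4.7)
The plan is to verify, in order: (a) $\B\subseteq\R_M(\B)$; (b) $\R_M(\B)$ is itself an AG and a solid ring; (c) $\R_M(\B)=\R_M(\R_M(\B))$, i.e.\ $\R_M(\B)$ is asymptotically closed; and (d) any asymptotically closed solid ring $A$ with $\B\subseteq A$ already contains $\R_M(\B)$. I expect the entire argument to be a mechanical unfolding of the AG axioms of Def.\ \ref{def:asymptoticGauge} together with the formal properties of the unified big-$O$ collected in Thm.\ \ref{thm:2ndBigOh-prop}; no step should present a substantive obstacle.

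For (a) I would appeal to the reflexivity $b_\eps=O(b_\eps)$, Thm.\ \ref{thm:2ndBigOh-prop}\ref{enu:2ndBigOh-rifl}. For (b), given $x=O(b)$ and $y=O(b')$ with $b,b'\in\B$, AG axiom \ref{enu:AGProduct} yields $p\in\B$ with $b\cdot b'=O(p)$; combined with the product rule of Thm.\ \ref{thm:2ndBigOh-prop}\ref{enu:2ndBigOh-prod} this gives $xy=O(p)$, hence $xy\in\R_M(\B)$. Axiom \ref{enu:AGabsSum} together with the sum rule gives $x+y\in\R_M(\B)$, axiom \ref{enu:AGproductScalar} closes $\R_M(\B)$ under real-scalar multiplication, and solidity---$|y_\eps|\le|x_\eps|$ with $x\in\R_M(\B)$ forces $y\in\R_M(\B)$---is immediate from unfolding the definition of $O_{a,A}$. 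Since $\R_M(\B)$ also contains an infinite net (inherited from $\B$ via (a)), these closure properties show that $\R_M(\B)$ is itself an AG, so that $\R_M(\R_M(\B))$ is well defined.

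For (c), the inclusion $\R_M(\B)\subseteq\R_M(\R_M(\B))$ is just (a) applied to $\R_M(\B)$. For the reverse, if $x\in\R_M(\R_M(\B))$ then $x=O(y)$ with $y\in\R_M(\B)$, and $y=O(b)$ with $b\in\B$; transitivity (Thm.\ \ref{thm:2ndBigOh-prop}\ref{enu:2ndBigOh-trans}) yields $x=O(b)$, i.e.\ $x\in\R_M(\B)$. Finally, for minimality (d), let $A$ be an asymptotically closed solid ring with $\B\subseteq A$; the evident monotonicity of $\R_M$ (immediate from its definition, since enlarging the pool of admissible dominators can only enlarge the class of moderate nets) together with $A=\R_M(A)$ yields $\R_M(\B)\subseteq\R_M(A)=A$, completing the proof.
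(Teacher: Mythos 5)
Your proposal is correct and follows essentially the same route as the paper: establish the closure properties of $\R_M(\B)$ under the ring operations using the AG axioms and the formal big-$O$ rules, observe asymptotic closure and solidity directly from the definition, and then obtain minimality from monotonicity of $\R_M$ applied to any asymptotically closed ring $A\supseteq\B$. Your version is a bit more explicit than the paper's (which states the idempotence $\R_M(\R_M(\B))=\R_M(\B)$ as "immediate" and unwinds monotonicity inline rather than naming it), and you usefully note that $\R_M(\B)$ is itself an AG so that asymptotic closedness is well-posed; the only point you might spell out is that in verifying AG axiom (v) for $\R_M(\B)$ the dominating net $s$ can be chosen in $\B_{>0}\subseteq\R_M(\B)_{>0}$, so positivity is preserved.
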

\begin{proof}
From Def. \ref{def:asymptoticGauge}.\ref{enu:AGinfinite} and Thm.
\textcolor{red}{\ref{thm:propOrd}} we get $0\in\R_{M}(\B)$. If $x_{\eps}=O(b_{\eps})$
and $y_{\eps}=O(c_{\eps})$ for $b,c\in\B$, then $x_{\eps}+y_{\eps}=O(|b_{\eps}|+|c_{\eps}|)$.
But $|b_{\eps}|+|c_{\eps}|=O(d_{\eps})$ for some $d\in\B_{>0}$ by
Def. \ref{def:asymptoticGauge}.\ref{enu:AGabsSum}. Therefore, $x_{\eps}+y_{\eps}=O(d_{\eps})$.
Analogously, we can prove the closure of $\R_{M}(\B)$ with respect
to the product. It is immediate to see that $\R_{M}(\R_{M}(\B))=\R_{M}(\B)$.
Let us prove the minimality: let $R$ be an asymptotically closed
ring containing $\B$. Let $x\in\R_{M}(\B)$, and let $a\in\B_{>0}$
be such that $x=O(a)$. Then, since $a\in\B\subseteq R$, we get $x\in\R_{M}(R)$.
But $R=\R_{M}(R)$ because $R$ is asymptotically closed, and we get
that $x\in R$. So $\R_{M}(\B)\subseteq R$. The definition of $\R_{M}(\B)$
directly gives that it is also asymptotically solid.\end{proof}
\begin{defn}
Given two asymptotic gauges $\B_{1},\B_{2}$ we say that $\B_{1}$
and $\B_{2}$ are\emph{ equivalent} if $\R_{M}(\B_{1})=\R_{M}(\B_{2})$.\end{defn}
\begin{example}
\ 
\begin{enumerate}[leftmargin=*,label=(\roman*),align=left ]
\item Every AG $\B$ is equivalent to $\R_{M}(\B)$.
\item The asymptotic gauges $\B_{1}=\{(\eps^{-a})\mid a\in\R\}$, $\B_{2}=\{(\eps^{-2a})\mid a\in\R\}$
and $\B_{3}=\{(\eps^{-n})\mid n\in\N\}$ on $I=(0,1]$ are all equivalent.
\end{enumerate}
\end{example}

\section{Colombeau algebras generated by two asymptotic gauges}

As we have already stated, every asymptotic gauge formalizes a notion
of \char`\"{}growth conditions\char`\"{}. For example, $\B^{s}$ (see
Sec.\ \ref{sub:AsymptoticGauges}) formalizes the idea of polynomial
growth. We can hence use an asymptotic gauge $\B$ to define moderate
nets and the reciprocals of nets taken from another asymptotic gauge
$\mathcal{Z}$ to define negligible nets. From this point of view,
it is natural to introduce the following definition:
\begin{defn}
Let $\Omega\subseteq\mathbb{R}^{n}$ be an open set, let $\mathcal{B},\mathcal{Z}$
be AG on a set of indices $\mathbb{I}=(I,\leq,\mathcal{I})$ and let
$\mathcal{A}$ be a subalgebra of $\mathcal{C}^{\infty}(\Omega)^{I}$.
The \emph{set of $\B$-moderate nets in $\mathcal{A}$} is 
\begin{multline*}
\mathcal{E}_{M}(\B,\Omega,\mathcal{A}):=\{u\in\mathcal{A}\mid\forall K\Subset\Omega\,\forall\alpha\in\mathbb{N}^{n}\\
\exists b\in\B:\ \sup\limits _{x\in K}|\partial^{\alpha}u_{\eps}(x)|=O(b_{\eps})\text{ as }\eps\in\mathbb{I}\}.
\end{multline*}

\noindent The \emph{set of $\mathcal{Z}$-negligible nets in $\mathcal{A}$}
is
\begin{multline}
\mathcal{N}(\mathcal{Z},\Omega,\mathcal{A}):=\{u\in\mathcal{A}\mid\forall K\Subset\Omega\,\forall\alpha\in\mathbb{N}^{n}\\
\forall z\in\mathcal{Z}_{>0}:\ \sup\limits _{x\in K}|\partial^{\alpha}u_{\eps}(x)|=O(z_{\eps}^{-1})\text{ as }\eps\in\mathbb{I}\}.\label{eq:DefNegligible}
\end{multline}

\noindent Moreover we set

\[
\mathcal{E}_{M}(\B,\Omega):=\mathcal{E}_{M}(\B,\Omega,\mathcal{C}^{\infty}(\Omega)^{I})
\]

\noindent and

\[
\mathcal{N}(\mathcal{Z},\Omega):=\mathcal{N}(\mathcal{Z},\Omega,\mathcal{C}^{\infty}(\Omega)^{I}).
\]
\end{defn}
\begin{rem*}
\label{rem:DefModerateNegligible}\ 
\begin{enumerate}[leftmargin=*,label=(\roman*),align=left ]
\item If $z\in\mathcal{Z}_{>0}$ then $\forall^{\mathbb{I}}\eps\in A_{\le a}:\ z_{\eps}>0$
for some $a\in A\in\mathcal{I}$, so that we can consider $z_{\eps}^{-1}$.
It is implicit in \eqref{eq:DefNegligible} that we are considering
only these $\eps$.
\item $\mathcal{E}_{M}(\B,\Omega)\cap\R^{I}=\R_{M}(\B)$ (by identifying
a constant function $\Omega\ra\R$ with its value).
\item $\mathcal{E}_{M}(\B,\Omega,\mathcal{A})=\mathcal{E}_{M}(\B,\Omega)\cap\mathcal{A}$.
\item $ $$\mathcal{N}(\mathcal{Z},\Omega,\mathcal{A}):=\mathcal{N}(\mathcal{Z},\Omega)\cap\mathcal{A}$.
\end{enumerate}
\end{rem*}
We want to find conditions that ensure that the quotient $\mathcal{E}_{M}(\B,\Omega)/\mathord{\mathcal{N}(\mathcal{Z},\Omega)}$
is an algebra. When this happens, we will use the following definition:
\begin{defn}
Let $\B,\mathcal{Z}$ be AG and let $\mathcal{A}$ be a subalgebra
of $\mathcal{C}^{\infty}(\Omega)^{I}$. The \emph{Colombeau AG algebra
generated by $\B$ and $\mathcal{Z}$ on $\mathcal{A}$} is the quotient
\[
\mathcal{G}(\B,\mathcal{Z},\mathcal{A}):=\mathcal{E}_{M}(\B,\Omega,\mathcal{A})/\mathord{\mathcal{N}(\mathcal{Z},\Omega,\mathcal{A})}.
\]

\noindent We also set $\mathcal{G}(\B,\mathcal{Z},\mathcal{C}^{\infty}(\Omega)^{I})=\GBZ$.
\end{defn}
In the following, we will only consider the case $\mathcal{A}=\mathcal{C}^{\infty}(\Omega)^{I}$
and real valued nets of smooth functions. Nevertheless, all the results
that we prove in this section can be easily generalized to the case
of a generic subalgebra $\mathcal{A}\subseteq\mathcal{C}^{\infty}(\Omega)^{I}$
and to complex valued nets.

Let us observe that $\mathcal{G}^{s}(\Omega)=\mathcal{G}(\B^{s},\B^{s},\Omega)$.
A known result is that, having fixed $\mathcal{Z}=\B^{s}$, $\B=\mathcal{Z}$
is the maximal choice such that $\GBZ$ is an algebra. We will prove
that a similar property holds in our general setting.
\begin{lem}
\label{lem:negligibleAreModerate}For every $\B,\mathcal{Z}$ asymptotic
gauges and $\Omega\subseteq\mathbb{R}^{n}$ the inclusion $\mathcal{N}(\mathcal{Z},\Omega)\subseteq\mathcal{E}_{M}(\B,\Omega)$
holds.\end{lem}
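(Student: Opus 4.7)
Fix $u\in\mathcal{N}(\mathcal{Z},\Omega)$, and let $K\Subset\Omega$ and $\alpha\in\mathbb{N}^{n}$ be arbitrary. The goal is to exhibit a single $b\in\mathcal{B}$ (in fact independent of $K$ and $\alpha$) such that $\sup_{x\in K}|\partial^{\alpha}u_{\eps}(x)|=O(b_{\eps})$ as $\eps\in\mathbb{I}$.

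First I produce a useful negligibility witness $z\in\mathcal{Z}_{>0}$ with $\lim_{\mathbb{I}}z=\infty$. By Def.~\ref{def:asymptoticGauge}.\ref{enu:AGinfinite} applied to $\mathcal{Z}$ some $i\in\mathcal{Z}$ satisfies $\lim_{\mathbb{I}}i=\infty$; by Lemma~\ref{lem:AGabsSum}.\ref{enu:AGabs} there is $z\in\mathcal{Z}_{>0}$ with $i=O(z)$ as $\eps\in\mathbb{I}$. From $|i_{\eps}|\le H|z_{\eps}|$ for $\eps$ sufficiently small in $\mathbb{I}$ and from $|i_{\eps}|\to\infty$, it follows that $z_{\eps}\to\infty$ as well, and in particular $z_{\eps}\ge 1$ for $\eps$ sufficiently small. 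Applying the defining property \eqref{eq:DefNegligible} of $\mathcal{N}(\mathcal{Z},\Omega)$ to this specific $z$ gives $\sup_{x\in K}|\partial^{\alpha}u_{\eps}(x)|=O(z_{\eps}^{-1})$; since $z_{\eps}^{-1}\le 1$ eventually, there exists $C\in\R_{>0}$ such that $\sup_{x\in K}|\partial^{\alpha}u_{\eps}(x)|\le C$ for $\eps$ sufficiently small in $\mathbb{I}$.

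Next I exhibit $b$. By Def.~\ref{def:asymptoticGauge}.\ref{enu:AGinfinite} applied this time to $\mathcal{B}$, choose $b\in\mathcal{B}$ with $\lim_{\mathbb{I}}b=\infty$; then $|b_{\eps}|\ge C$ for $\eps$ sufficiently small in $\mathbb{I}$. Combining this with the bound of the previous paragraph yields $\sup_{x\in K}|\partial^{\alpha}u_{\eps}(x)|\le C\le|b_{\eps}|$ for $\eps$ sufficiently small in $\mathbb{I}$, i.e.\ $\sup_{x\in K}|\partial^{\alpha}u_{\eps}(x)|=O(b_{\eps})$ as $\eps\in\mathbb{I}$. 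As $K$ and $\alpha$ were arbitrary, this shows $u\in\mathcal{E}_{M}(\mathcal{B},\Omega)$.

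The only mildly delicate point is guaranteeing that $\mathcal{Z}_{>0}$ actually contains a net tending to infinity; this is precisely why the combination of axiom~\ref{enu:AGinfinite} and axiom~\ref{enu:AGabsSum} of Def.~\ref{def:asymptoticGauge} (via Lemma~\ref{lem:AGabsSum}) is needed. Everything else is routine big-$O$ bookkeeping from Thm.~\ref{thm:2ndBigOh-prop}, together with the observation that an eventually bounded net is absorbed by any net with infinite $\mathbb{I}$-limit.
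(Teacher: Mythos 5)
Your argument is correct and follows essentially the same route as the paper's: apply the negligibility estimate to a single infinite witness in $\mathcal{Z}$, note the resulting net is eventually bounded, and absorb that bound into an infinite $b\in\mathcal{B}$ supplied by Def.~\ref{def:asymptoticGauge}.\ref{enu:AGinfinite}. You are a bit more careful than the paper in first upgrading an infinite $i\in\mathcal{Z}$ to a \emph{positive} infinite $z\in\mathcal{Z}_{>0}$ via Lemma~\ref{lem:AGabsSum}.\ref{enu:AGabs} before invoking~\eqref{eq:DefNegligible}, a step the paper leaves implicit.
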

\begin{proof}
Let $b\in\B$ and $z\in\mathcal{Z}$ be infinite nets: $\lim_{\eps\in\mathbb{I}}b_{\eps}=\lim_{\eps\in\mathbb{I}}z_{\eps}=\infty$.
Then $\lim_{\eps\in\mathbb{I}}z_{\eps}^{-1}=0$, so that, for some
$a\in A\in\mathcal{I}$, $|z_{\eps}^{-1}|<1$ $\forall^{\mathbb{I}}\eps\in A_{\le a}$.
Analogously, $1<|b_{\eps}|$ $\forall^{\mathbb{I}}\eps\in B_{\le b}$
for some $b\in B\in\mathcal{I}$. Therefore, $z_{\eps}^{-1}=O(b_{\eps})$.
Now, let $K\Subset\Omega$, $\alpha\in\N^{n}$ and $u\in\mathcal{N}(\mathcal{Z},\Omega)$.
As 
\[
\sup\limits _{x\in K}|\partial^{\alpha}u_{\eps}(x)|=O(z_{\eps}^{-1})
\]
and $z_{\eps}^{-1}=O(b_{\eps})$, we obtain that 
\[
\sup\limits _{x\in K}|\partial^{\alpha}u_{\eps}(x)|=O(b_{\eps}),
\]
so $\ue\in\mathcal{E}_{M}(\B,\Omega)$.\end{proof}
\begin{lem}
\label{lem:moderateNegligibleRings}For every $\B,\mathcal{Z}$ AG
and $\Omega\subseteq\mathbb{R}^{n}$ open set, both $\mathcal{E}_{M}(\B,\Omega)$
and $\mathcal{N}(\mathcal{Z},\Omega)$ are rings.\end{lem}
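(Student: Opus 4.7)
The plan is to verify the ring axioms for $\mathcal{E}_M(\mathcal{B},\Omega)$ and $\mathcal{N}(\mathcal{Z},\Omega)$ separately. In both cases, the zero net is trivially in the set, and closure under additive inverses follows at once from $O(x_\eps)=O(-x_\eps)$ (part (viii) of Theorem \ref{thm:2ndBigOh-prop}). Hence only closure under sum and product needs real work. The key inputs will be the asymptotic closure properties of AGs (Definition \ref{def:asymptoticGauge} together with Lemma \ref{lem:AGabsSum}), the Leibniz rule, and the formal big-$O$ calculus of Theorem \ref{thm:2ndBigOh-prop}.

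For $\mathcal{E}_M(\mathcal{B},\Omega)$: fix $u,v$ in it, $K\Subset\Omega$, and $\alpha\in\mathbb{N}^n$. Pick $b_\beta,c_\beta\in\mathcal{B}$ bounding $\sup_{x\in K}|\partial^\beta u_\eps(x)|$ and $\sup_{x\in K}|\partial^\beta v_\eps(x)|$ respectively, for each multi-index $\beta\le\alpha$. For the sum, $\sup_K|\partial^\alpha(u+v)_\eps|=O(b_\alpha)+O(c_\alpha)=O(|b_\alpha|+|c_\alpha|)$, and by Definition \ref{def:asymptoticGauge}\ref{enu:AGabsSum} (or equivalently Lemma \ref{lem:AGabsSum}) there exists $s\in\mathcal{B}$ with $|b_\alpha|+|c_\alpha|=O(s)$. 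For the product, the Leibniz rule yields
\[
\sup_{x\in K}|\partial^\alpha(uv)_\eps(x)|\le\sum_{\beta\le\alpha}\binom{\alpha}{\beta}\sup_{x\in K}|\partial^\beta u_\eps(x)|\cdot\sup_{x\in K}|\partial^{\alpha-\beta}v_\eps(x)|;
\]
each summand is $O(b_\beta\cdot c_{\alpha-\beta})$, and by property \ref{enu:AGProduct} of Definition \ref{def:asymptoticGauge} there is $p_\beta\in\mathcal{B}$ with $b_\beta\cdot c_{\alpha-\beta}=O(p_\beta)$. Finitely many such $p_\beta$ are then collapsed into a single $p\in\mathcal{B}$ by iteratively applying the asymptotic closure under sum (Lemma \ref{lem:AGabsSum}).

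For $\mathcal{N}(\mathcal{Z},\Omega)$: fix $u,v$ in it, $K\Subset\Omega$, $\alpha\in\mathbb{N}^n$, and an arbitrary $z\in\mathcal{Z}_{>0}$. For the sum, both $\sup_K|\partial^\alpha u_\eps|$ and $\sup_K|\partial^\alpha v_\eps|$ are $O(z_\eps^{-1})$ by hypothesis, hence so is their sum by Theorem \ref{thm:2ndBigOh-prop}\ref{enu:2ndBigOh-sumEqualSummand}. For the product, I will use Leibniz again and exploit the following observation, which I regard as the single delicate point: \emph{every $u\in\mathcal{N}(\mathcal{Z},\Omega)$ has locally bounded derivatives of every order}. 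Indeed, by Definition \ref{def:asymptoticGauge}\ref{enu:AGinfinite} and Lemma \ref{lem:AGabsSum} applied to $\mathcal{Z}$, there exists $w\in\mathcal{Z}_{>0}$ with $\lim_{\mathbb{I}}w=\infty$, so $w_\eps^{-1}=O(1)$, and negligibility of $u$ with this particular $w$ gives $\sup_K|\partial^\beta u_\eps|=O(1)$. Consequently, in each Leibniz summand $\sup_K|\partial^\beta u_\eps|\cdot\sup_K|\partial^{\alpha-\beta}v_\eps|$, the first factor can be taken to be $O(z_\eps^{-1})$ (negligibility of $u$ with the \emph{chosen} $z$) and the second factor to be $O(1)$ (by the observation applied to $v$), so the product is $O(z_\eps^{-1})$. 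Summing finitely many such terms preserves the bound (Theorem \ref{thm:2ndBigOh-prop}\ref{enu:2ndBigOh-sumEqualSummand}), which establishes $uv\in\mathcal{N}(\mathcal{Z},\Omega)$. The main obstacle is precisely this last passage: without the automatic boundedness of derivatives of negligible nets, negligibility of a product would not be reducible to negligibility of a single factor; the axiom that each AG contains an infinite net together with Lemma \ref{lem:AGabsSum} is exactly what makes this reduction work.
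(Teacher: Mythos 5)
Your proof is correct and takes the same overall route as the paper (Leibniz rule plus the closure axioms of Definition~\ref{def:asymptoticGauge}), but it is substantially more careful on the one step the paper dismisses with ``Similarly, we can proceed for the product.'' For $\mathcal{N}(\mathcal{Z},\Omega)$ the naive ``similarly'' would give each Leibniz summand as $O(z_\eps^{-1})\cdot O(z_\eps^{-1})=O(z_\eps^{-2})$, which is not $O(z_\eps^{-1})$ for a general $z\in\mathcal{Z}_{>0}$, since elements of $\mathcal{Z}_{>0}$ need not be bounded away from zero. Your observation that every AG contains a positive infinite net (combining axiom \ref{enu:AGinfinite} of Definition~\ref{def:asymptoticGauge} with Lemma~\ref{lem:AGabsSum}), so that negligibility forces all local derivative sup-norms to be $O(1)$, is precisely what closes this gap, and it is in the same spirit as the proof of Lemma~\ref{lem:negligibleAreModerate}. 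In short: same strategy, but your version actually supplies the argument the paper's one-liner leaves implicit, which is a real improvement in rigour.
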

\begin{proof}
Let $u,v\in\mathcal{E}_{M}(\B,\Omega)$. Let $K\Subset\Omega$, $\alpha\in\N^{n}$
and let $b,c\in\B_{>0}$ be such that 
\[
\sup\limits _{x\in K}|\partial^{\alpha}u_{\eps}(x)|=O(b_{\eps}),\ \sup\limits _{x\in K}|\partial^{\alpha}v_{\eps}(x)|=O(c_{\eps}).
\]
Finally, let $d\in\B_{>0}$ be such that $|b_{\eps}|+|c_{\eps}|=O(d_{\eps})$.
Then 
\[
\sup\limits _{x\in K}|\partial^{\alpha}(u_{\eps}+v_{\eps})(x)|=O(b_{\eps})+O(c_{\eps})=O(|b_{\eps}|+|c_{\eps}|)=O(d_{\eps}),
\]
so $u+v\in\mathcal{E}_{M}(\B,\Omega)$. Similarly, we can proceed
for the product.

\noindent Now let $u,v\in\mathcal{N}(\mathcal{Z},\Omega)$. Let $K\Subset\Omega$,
$\alpha\in\N^{n}$ and let $z\in\mathcal{Z}_{>0}$. Then 
\[
\sup\limits _{x\in K}|\partial^{\alpha}(u_{\eps}+v_{\eps})(x)|=O(z_{\eps}^{-1})+O(z_{\eps}^{-1})=O(z_{\eps}^{-1}),
\]
so $u+v\in\mathcal{N}(\mathcal{Z},\Omega)$. Similarly, we can proceed
for the product.
\end{proof}
To have that $\GBZ$ is an algebra, we need that the product of a
moderate net by a negligible one is always negligible. This implies
that if $b\in\B$ and $z\in\mathcal{Z}_{>0}$, then we can find a
$w\in\mathcal{Z}_{>0}$, depending on $b$ and sufficiently small,
such that $w_{\eps}^{-1}\cdot b_{\eps}$ is bounded by $z_{\eps}^{-1}$.
This forces a relation between the AG $\B$ and $\mathcal{Z}$ which
can be summarized by saying that the scale $\mathcal{Z}$ is stronger
or equal to that of $\B$, as it is precisely stated in the following
theorem.
\begin{thm}
\noindent \label{thm:equivCondForIdeal}Let $\B$, $\mathcal{Z}$
be AG and $\Omega\subseteq\R^{n}$ be an open set. Then the following
properties are equivalent
\begin{enumerate}[leftmargin=*,label=(\roman*),align=left ]
\item \label{enu:bigOhInclusion}$\R_{M}(\B)\subseteq\R_{M}(\mathcal{Z})$;
\item \label{enu:B-Z-relByRepresentatives}$\forall b\in\B\,\forall z\in\mathcal{Z}_{>0}\,\exists w\in\mathcal{Z}_{>0}:\ w_{\eps}^{-1}\cdot b_{\eps}=O(z_{\eps}^{-1})$.
\end{enumerate}

\noindent If these hold, then
\begin{enumerate}[leftmargin=*,label=(\roman*),align=left,start=3]
\item \label{enu:ideal}$\mathcal{N}(\mathcal{Z},\Omega)$ is a multiplicative
ideal in $\mathcal{E}_{M}(\B,\Omega)$ (so, in particular, the quotient
$\GBZ$ is an algebra). 
\end{enumerate}

\noindent Moreover, if
\begin{equation}
\forall b\in\B\,\forall z\in\mathcal{Z}:\ b_{\eps}=O(z_{\eps})\text{ or }z_{\eps}=O(b_{\eps})\label{eq:B-Z_O-totallyOrdered}
\end{equation}
then \ref{enu:ideal} entails \ref{enu:bigOhInclusion}.

\end{thm}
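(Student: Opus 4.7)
The plan is to prove (ii) $\Rightarrow$ (i), (i) $\Rightarrow$ (ii), (ii) $\Rightarrow$ (iii), and finally the non-trivial implication (iii) $\Rightarrow$ (i) under the totally-ordered hypothesis. Throughout, I will repeatedly use the fact that $\mathcal{Z}_{>0}$ contains a positive infinite net: start from any infinite $i\in\mathcal{Z}$ (Def.~\ref{def:asymptoticGauge}.\ref{enu:AGinfinite}), dominate $|i|$ by some $a\in\mathcal{Z}_{>0}$ (Lemma~\ref{lem:AGabsSum}.\ref{enu:AGabs}); the estimate $|i_{\eps}|\le H\,a_{\eps}$ then forces $a\to\infty$.

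For (ii) $\Rightarrow$ (i), apply (ii) to any $b\in\mathcal{B}$ with a fixed positive infinite $z_{1}\in\mathcal{Z}_{>0}$; since $z_{1,\eps}^{-1}=O(1)$ this gives $w^{-1}b=O(1)$, i.e.\ $b=O(w)\in\R_{M}(\mathcal{Z})$. For (i) $\Rightarrow$ (ii), fix $b\in\mathcal{B}$, $z\in\mathcal{Z}_{>0}$; by (i) write $b=O(w_{0})$ with $w_{0}\in\mathcal{Z}_{>0}$ (positivity via Lemma~\ref{lem:AGabsSum}.\ref{enu:AGabs}), then $b\cdot z=O(w_{0}\cdot z)=O(w)$ for some $w\in\mathcal{Z}_{>0}$ (AGProduct and AGabs applied to $\mathcal{Z}$), and division by $z>0$ yields $w^{-1}b=O(z^{-1})$. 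The step (ii) $\Rightarrow$ (iii) reduces, via Lemma~\ref{lem:moderateNegligibleRings}, to the absorbing property: for $u\in\mathcal{E}_{M}(\mathcal{B},\Omega)$, $v\in\mathcal{N}(\mathcal{Z},\Omega)$, $K\Subset\Omega$, $\alpha\in\N^{n}$ and $z\in\mathcal{Z}_{>0}$, Leibniz expands $\partial^{\alpha}(uv)$ as a finite sum; each factor $\partial^{\beta}u$ admits a dominating $b_{\beta}\in\mathcal{B}$, and applying (ii) to $b_{\beta}$ and $z$ produces $w_{\beta}\in\mathcal{Z}_{>0}$ against which the negligibility of $v$ gives $\sup_{K}|\partial^{\alpha-\beta}v_{\eps}|=O(w_{\beta,\eps}^{-1})$. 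The product estimate $O(b_{\beta,\eps}w_{\beta,\eps}^{-1})=O(z_{\eps}^{-1})$, summed by Theorem~\ref{thm:2ndBigOh-prop}, completes the argument.

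The hard part is (iii) $\Rightarrow$ (i) under hypothesis \eqref{eq:B-Z_O-totallyOrdered}, which I would prove contrapositively. Suppose some $b\in\mathcal{B}$ is not in $\R_{M}(\mathcal{Z})$; Lemma~\ref{lem:AGabsSum}.\ref{enu:AGabs} together with the solidity of $\R_{M}(\mathcal{Z})$ (Theorem~\ref{thm:acr}) let me replace $b$ by a positive dominant in $\mathcal{B}_{>0}$, still outside $\R_{M}(\mathcal{Z})$. The totally-ordered hypothesis then forces $z_{\eps}=O(b_{\eps})$ for every $z\in\mathcal{Z}$. The witnesses violating (iii) are the constant-in-$x$ smooth nets $u_{\eps}(x):=b_{\eps}$ and $v_{\eps}(x):=b_{\eps}^{-1}$, the latter being legitimate on the cofinal $\eps$-set where $b_{\eps}>0$. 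Then $u\in\mathcal{E}_{M}(\mathcal{B},\Omega)$ trivially (derivatives in $x$ vanish), $v\in\mathcal{N}(\mathcal{Z},\Omega)$ is exactly the rewriting of $z_{\eps}=O(b_{\eps})$ as $b_{\eps}^{-1}=O(z_{\eps}^{-1})$ for every $z\in\mathcal{Z}_{>0}$, while $uv\equiv 1$ is \emph{not} negligible: any positive infinite $z_{0}\in\mathcal{Z}_{>0}$ satisfies $z_{0,\eps}^{-1}\to 0$, precluding $1=O(z_{0,\eps}^{-1})$. This contradicts (iii). The main conceptual obstacle is recognizing that the correct witness pair is $(b,b^{-1})$; bookkeeping the set where $b_{\eps}>0$ to make $b_{\eps}^{-1}$ meaningful is straightforward within the $\forall^{\mathbb{I}}\eps$ formalism used throughout the paper.
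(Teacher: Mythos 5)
Your proposal is correct and follows essentially the same route as the paper's own proof: the equivalence of \ref{enu:bigOhInclusion} and \ref{enu:B-Z-relByRepresentatives} by the algebraic rearrangement $w^{-1}b=O(z^{-1})\Leftrightarrow bz=O(w)$ together with the ring/solidity properties of $\R_{M}(\mathcal{Z})$, the Leibniz-rule argument for \ref{enu:ideal}, and, for the final implication, the contrapositive with the same witness pair $(b_{\eps})$, $(b_{\eps}^{-1})$ extracted from a $b\in\B_{>0}$ that dominates all of $\mathcal{Z}$ under hypothesis \eqref{eq:B-Z_O-totallyOrdered}. You are in fact slightly more scrupulous than the paper about where positivity enters (invoking Lemma~\ref{lem:AGabsSum}.\ref{enu:AGabs} to normalise to positive dominants and noting the $\forall^{\mathbb{I}}\eps$ caveat for $b_{\eps}^{-1}$), but these are refinements of the same argument rather than a different one.
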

\begin{proof}
\ref{enu:bigOhInclusion} $\Rightarrow$ \ref{enu:B-Z-relByRepresentatives}:
Let $b\in\B$ and $z\in\mathcal{Z}_{>0}$, we have to prove that $z\cdot b\in\R_{M}(\mathcal{Z})$.
But $b\in\B\subseteq\R_{M}(\B)\subseteq\R_{M}(\mathcal{Z})$ and $z\in\mathcal{Z}_{>0}\subseteq\R_{M}(\mathcal{Z})$.
Since $\R_{M}(\mathcal{Z})$ is a ring, the conclusion follows.

\noindent \ref{enu:B-Z-relByRepresentatives} $\Rightarrow$ \ref{enu:bigOhInclusion}:
Take $x\in\R_{M}(\B)$, so that $x_{\eps}=O(b_{\eps})$ for some $b\in\B$.
Take an infinite net $z\in\mathcal{Z}$: $\lim_{\eps\in\mathbb{I}}z_{\eps}=\infty$.
Then $b_{\eps}=O(z_{\eps}\cdot b_{\eps})$ and hence $x_{\eps}=O(z_{\eps}\cdot b_{\eps})$.
By \ref{enu:B-Z-relByRepresentatives} we get $w\in\mathcal{Z}_{>0}$
such that $z_{\eps}\cdot b_{\eps}=O(w_{\eps})$, which implies $x\in\R_{M}(\mathcal{Z})$.

\noindent \ref{enu:B-Z-relByRepresentatives} $\Rightarrow$ \ref{enu:ideal}:
Let $u\in\mathcal{N}(\mathcal{Z},\Omega)$ and $v\in\mathcal{E}_{M}(\B,\Omega)$,
let $K\Subset\Omega$, $\alpha\in\mathbb{N}^{n}$. Since
\[
\sup_{x\in K}\left|\partial^{\alpha}(u_{\eps}\cdot v_{\eps})(x)\right|\le\sum_{\substack{\beta\in\N^{n}\\
\beta\le\alpha
}
}{\alpha \choose \beta}\sup_{x\in K}\left|\partial^{\beta}u_{\eps}(x)\right|\cdot\sup_{x\in K}\left|\partial^{\alpha-\beta}v_{\eps}(x)\right|\quad\forall\eps\in I,
\]
for each $\beta\le\alpha$ we can find $b_{\beta}=(b_{\beta\eps})\in\B_{>0}$
such that 
\[
\sup\limits _{x\in K}|\partial^{\alpha-\beta}v_{\eps}(x)|=O(b_{\beta\eps}).
\]
Therefore, for all $w\in\mathcal{Z}_{>0}$ we obtain
\begin{align}
\sup_{x\in K}\left|\partial^{\alpha}(u_{\eps}\cdot v_{\eps})(x)\right| & =\sum_{\substack{\beta\in\N^{n}\\
\beta\le\alpha
}
}{\alpha \choose \beta}O(w_{\eps}^{-1})O(b_{\beta\eps})=\nonumber \\
 & =O\left(w_{\eps}^{-1}\right)\cdot O\left(\sum_{\substack{\beta\in\N^{n}\\
\beta\le\alpha
}
}b_{\beta\eps}\right).\label{eq:supDerProd}
\end{align}
Let $b\in\B_{>0}$ such that $\sum_{\substack{\beta\in\N^{n}\\
\beta\le\alpha
}
}b_{\beta\eps}=O(b_{\eps})$ and $z\in\mathcal{Z}_{>0}$. By \ref{enu:B-Z-relByRepresentatives}
there exists $(w_{\eps})\in\mathcal{Z}_{>0}$ such that $w_{\eps}^{-1}\cdot b_{\eps}=O(z_{\eps}^{-1})$,
and hence $\sup_{x\in K}\left|\partial^{\alpha}(u_{\eps}\cdot v_{\eps})(x)\right|=O(z_{\eps}^{-1})$
by \eqref{eq:supDerProd}.

\noindent \ref{enu:ideal} and \eqref{eq:B-Z_O-totallyOrdered} $\Rightarrow$
\ref{enu:bigOhInclusion}: Let us assume that \ref{enu:bigOhInclusion}
does not hold. Then there exists $x\in\R_{M}(\B)\setminus\R_{M}(\mathcal{Z})$,
i.e.\ $x_{\eps}=O(b_{\eps})$ for some $b\in\B_{>0}$. Since $x\notin\R_{M}(\mathcal{Z})$,
it is easy to see that $b\notin\R_{M}(\mathcal{Z})$. From $b\notin\R_{M}(\mathcal{Z})$,
we get $b_{\eps}\neq O(z_{\eps})$ for all $(z_{\eps})\in\mathcal{Z}$.
Thus, \eqref{eq:B-Z_O-totallyOrdered} yields $z_{\eps}=O(b_{\eps})$
for all $(z_{\eps})\in\mathcal{Z}$. Hence $b_{\eps}^{-1}=O(z_{\eps}^{-1})$
for all $(z_{\eps})\in\mathcal{Z}_{>0}$, so we have that
\[
(b_{\eps})\in\mathcal{E}_{M}(\B,\Omega);
\]
\[
(b_{\eps})^{-1}\in\mathcal{N}(\mathcal{Z},\Omega);
\]
\[
(b_{\eps})\cdot(b_{\eps}^{-1})\notin\mathcal{N}(\mathcal{Z},\Omega),
\]
so $\mathcal{N}(\mathcal{Z},\Omega)$ is not a multiplicative ideal
in $\mathcal{E}_{M}(\B,\Omega)$, which is absurd.
\end{proof}
Let us note that, in particular, when \eqref{eq:B-Z_O-totallyOrdered}
holds, $\GBZ$ is an algebra if and only if $\R_{M}(\B)\subseteq\R_{M}(\mathcal{Z})$,
so the maximal possible choice for $\B$ is to take $\B$ equivalent
to $\mathcal{Z}$. This is the generalization to our context of the
known result for $\mathcal{G}^{s}(\Omega)$.

We conclude this section by proving that equivalent asymptotic gauges
give the same Colombeau AG algebras:
\begin{thm}
\label{thm:ograndi} Let $\B,\mathcal{Z}$ be AG, and let $\Omega\subseteq\mathbb{R}^{n}$.
The following conditions hold: 
\begin{enumerate}[leftmargin=*,label=(\roman*),align=left ]
\item $\mathcal{E}_{M}(\B,\Omega)=\mathcal{E}_{M}(\R_{M}(\B),\Omega);$
\item $\mathcal{N}(\mathcal{Z},\Omega)=\mathcal{N}(\R_{M}(\mathcal{Z}),\Omega)$. 
\end{enumerate}

In particular, $\mathcal{G}(\B,\mathcal{Z},\Omega)=\mathcal{G}(\R_{M}(\B),\R_{M}(\mathcal{Z}),\Omega)$.

\end{thm}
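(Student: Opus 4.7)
The plan is to prove (i) and (ii) by double inclusion, using in each case one trivial direction (coming from $\B\subseteq\R_M(\B)$, resp.\ $\mathcal{Z}\subseteq\R_M(\mathcal{Z})$) and one direction that relies on transitivity of $O$. The ``in particular'' part is then immediate.

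For (i), the inclusion $\mathcal{E}_M(\B,\Omega)\subseteq\mathcal{E}_M(\R_M(\B),\Omega)$ is free: any witness $b\in\B$ of moderateness is also a witness in $\R_M(\B)\supseteq\B$. For the converse, let $u\in\mathcal{E}_M(\R_M(\B),\Omega)$, fix $K\Subset\Omega$ and $\alpha\in\N^n$, and pick $c\in\R_M(\B)$ with $\sup_{x\in K}|\partial^\alpha u_\eps(x)|=O(c_\eps)$. By definition of $\R_M(\B)$ there exists $b\in\B$ with $c_\eps=O(b_\eps)$, so Thm.~\ref{thm:2ndBigOh-prop}\ref{enu:2ndBigOh-trans} yields $\sup_{x\in K}|\partial^\alpha u_\eps(x)|=O(b_\eps)$, hence $u\in\mathcal{E}_M(\B,\Omega)$.

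For (ii), the direction $\mathcal{N}(\R_M(\mathcal{Z}),\Omega)\subseteq\mathcal{N}(\mathcal{Z},\Omega)$ is immediate: if $z\in\mathcal{Z}_{>0}$ then $z=O(z)$ by Thm.~\ref{thm:2ndBigOh-prop}\ref{enu:2ndBigOh-rifl}, so $z\in\R_M(\mathcal{Z})_{>0}$, and therefore testing the $\R_M(\mathcal{Z})_{>0}$-condition at $z$ gives the $\mathcal{Z}_{>0}$-condition. The converse is the only subtle point. Let $u\in\mathcal{N}(\mathcal{Z},\Omega)$ and fix $K\Subset\Omega$, $\alpha\in\N^n$, and $w\in\R_M(\mathcal{Z})_{>0}$. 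Choose $b\in\mathcal{Z}$ with $w_\eps=O(b_\eps)$; by Lem.~\ref{lem:AGabsSum}\ref{enu:AGabs} applied to $\mathcal{Z}$ we may upgrade $b$ to $b'\in\mathcal{Z}_{>0}$ with $b=O(b')$, hence $w_\eps=O(b'_\eps)$ with both $w,b'>_\mathbb{I}0$. The key observation is then that $w_\eps\le H\,b'_\eps$ (for $\eps$ suitably small in the sense of Def.~\ref{def:usualBigOh}) entails $(b'_\eps)^{-1}\le H\,w_\eps^{-1}$, i.e.\ $(b'_\eps)^{-1}=O(w_\eps^{-1})$. Since $u\in\mathcal{N}(\mathcal{Z},\Omega)$ applied at $z:=b'\in\mathcal{Z}_{>0}$ gives $\sup_{x\in K}|\partial^\alpha u_\eps(x)|=O((b'_\eps)^{-1})$, transitivity yields $\sup_{x\in K}|\partial^\alpha u_\eps(x)|=O(w_\eps^{-1})$, so $u\in\mathcal{N}(\R_M(\mathcal{Z}),\Omega)$.

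The main (mild) obstacle is precisely this inversion step: one must be able to pass from $w=O(b)$ to $b^{-1}=O(w^{-1})$, and this requires \emph{positive} comparands. That is exactly why Def.~\ref{def:asymptoticGauge}\ref{enu:AGabsSum}, reformulated as Lem.~\ref{lem:AGabsSum}\ref{enu:AGabs}, was included in the notion of asymptotic gauge; without the possibility to replace an arbitrary $b\in\mathcal{Z}$ by a positive representative in $\mathcal{Z}_{>0}$ dominating it, the argument would fail. Finally, from (i) and (ii) the identification of quotients
\[
\mathcal{G}(\B,\mathcal{Z},\Omega)=\mathcal{E}_M(\B,\Omega)/\mathcal{N}(\mathcal{Z},\Omega)=\mathcal{E}_M(\R_M(\B),\Omega)/\mathcal{N}(\R_M(\mathcal{Z}),\Omega)=\mathcal{G}(\R_M(\B),\R_M(\mathcal{Z}),\Omega)
\]
follows with no further work.
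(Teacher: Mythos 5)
Your proof is correct, and it is essentially what the paper intends: the paper's own proof consists solely of the remark that the claims ``follow from the definitions,'' and your argument is the natural unpacking of that statement, using $\B\subseteq\R_M(\B)$ for the easy inclusions and transitivity of $O$ (together with Lem.~\ref{lem:AGabsSum}\ref{enu:AGabs} to upgrade a dominating $b\in\mathcal{Z}$ to a positive $b'\in\mathcal{Z}_{>0}$ before inverting) for the nontrivial ones. Your explicit identification of the inversion step, and of why positivity of the comparands is what makes it legitimate, is a genuinely useful gloss that the paper leaves implicit.
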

\begin{proof}
The proofs follow from the definitions.
\end{proof}
A consequence of Theorem \ref{thm:ograndi} and Proposition \ref{thm:acr}
is that the theory could be developed in terms of asymptotically closed
rings. This is the point of view followed by \foreignlanguage{british}{\cite{Del09,Has11}}.
Nevertheless, we think that it is useful to consider the notion of
asymptotic gauge because many growth conditions are more easily expressed
in terms of asymptotic gauges than in terms of asymptotically closed
rings: note e.g.\ that the assumption \eqref{eq:B-Z_O-totallyOrdered}
is too restrictive if $\B$ is a ring.
\begin{example}
\label{exa:fullAlgebra}Let $\mathcal{B}$ be an AG on $\mathbb{I}^{\srm}$,
then Thm. \ref{thm:AGfromInfinitesRho} yields that
\[
\B^{\erm}:=\left\{ \left(b_{\underline{\eps}}\right)_{\eps\in\mathcal{A}_{0}}\mid b\in\B\right\} \ ,\ \hat{\B}:=\left\{ \left(b_{\underline{\eps}}\right)_{\eps\in\mathcal{D}(\R^{d})}\mid b\in\B\right\} 
\]
are AG on $\mathbb{I}^{e}$ and $\hat{\mathbb{I}}$ respectively.
Therefore, if $\R_{M}(\B)\subseteq\R_{M}(\mathcal{Z})$ then $\mathcal{G}(\B^{\erm},\mathcal{Z}^{\erm},\Omega)$
and $\mathcal{G}(\hat{\B},\hat{\mathcal{Z}},\Omega)$ generalize the
full Colombeau algebra $\mathcal{G}^{\erm}(\Omega)$ and the algebra
$\hat{\mathcal{G}}(\Omega)$ of asymptotic functions (see also Cor.
\ref{cor:specialAlgebra} and Thm. \ref{thm:fullEquiv}).
\end{example}

\subsubsection{\label{sub:A-comparison}A comparison with asymptotic scales, $(\mathcal{C},\mathcal{E},\mathcal{P})$
algebras and exponent weights}

To study the relations between AG and the generalizations of Colombeau
algebras cited in the title, in this section we only consider the
set of indices $\mathbb{I}^{\srm}$ of the special algebra, the sheaf
$\Coo$of ordinary smooth functions, and the usual family of norms
\[
S:=\left\{ \|\partial^{\alpha}\cdot\|_{L^{\infty}(K)}\mid K\Subset\Omega,\alpha\in\N^{n}\right\} .
\]

Let $\B$, $\mathcal{Z}$ be AG, with $\R_{M}(\B)\subseteq\R_{M}(\mathcal{Z})$.
We already know (Thm. \ref{thm:acr}) that $\R_{M}(\B)$ is a solid
ring. Set
\begin{equation}
J_{\mathcal{Z}}:=\left\{ x\in\R^{I^{\srm}}\mid\forall z\in\mathcal{Z}_{>0}:\ x_{\eps}=O(z_{\eps}^{-1})\right\} .\label{eq:idealJ_Z}
\end{equation}
If $b\in\R_{M}(\B)$, $x\in J_{\mathcal{Z}}$ and $z\in\mathcal{Z}_{>0}$,
then Thm. \ref{thm:equivCondForIdeal}.\ref{enu:B-Z-relByRepresentatives}
yields $w_{\eps}^{-1}\cdot b_{\eps}=O(z_{\eps}^{-1})$ for some $w\in\mathcal{Z}_{>0}$.
By the definition \eqref{eq:idealJ_Z}, we have $x_{\eps}=O(w_{\eps}^{-1})$,
so that $x_{\eps}\cdot b_{\eps}=O(w_{\eps}^{-1}\cdot b_{\eps})=O(z_{\eps}^{-1})$,
which proves that $x\cdot b\in J_{\mathcal{Z}}$, i.e.\ that $J_{\mathcal{Z}}$
is an ideal of $\R_{M}(\B)$. Directly from the definitions, we get
\begin{equation}
\mathcal{A}_{\R_{M}(\B)/J_{\mathcal{Z}},{\scriptstyle \mathcal{C}^{\infty}},S}=\mathcal{G}(\B,\mathcal{Z},-),\label{eq:AG2CEP}
\end{equation}
where the left hand side is the $(\mathcal{C},\mathcal{E},\mathcal{P})$-algebra
associated to the ring $\R_{M}(\B)/J_{\mathcal{Z}}$.

Vice versa, let us assume that $A$ is a solid subring of $\R^{I^{\srm}}$
containing at least one infinite net
\[
\exists a\in A:\ \lim_{\eps\to0}a_{\eps}=\infty.
\]
Then $\B:=A$ is an AG. We can now consider the solid ideal associated
to $A$, i.e.
\[
I_{A}=\left\{ x\in\R^{I^{\srm}}\mid\forall a\in A^{*}:\ x=O(a)\right\} .
\]
It is easy to prove that $I_{A}=\left\{ x\in\R^{I^{\srm}}\mid\forall a\in A_{>0}^{*}:\ x=O(a^{-1})\right\} $
so that we can set $\mathcal{Z}:=I_{A}$, which is an AG. Directly
from the definitions, we get
\begin{equation}
\mathcal{A}_{A/I_{A},{\scriptstyle \mathcal{C}^{\infty}},S}=\mathcal{G}(\B,\mathcal{Z},-).\label{eq:CEP2AG}
\end{equation}
This proves that, in the framework of the special algebra, $(\mathcal{C},\mathcal{E},\mathcal{P})$-algebras
and Colombeau AG algebra are essentially equivalent.

In \cite{Has11}, the relations between $(\mathcal{C},\mathcal{E},\mathcal{P})$-algebras
and asymptotic algebras generated by an asymptotic scales are already
clarified, so that our previous \eqref{eq:AG2CEP}, \eqref{eq:CEP2AG}
would also give the relations with the latter. Anyway, let us assume
that $a=(a_{m})_{m\in\mathbb{Z}}$, $a_{m}\in\R_{>0}^{I^{\srm}}$,
is an asymptotic scale:
\[
\forall m\in\mathbb{Z}:\ a_{m+1}=o(a_{m}),a_{-m}=\frac{1}{a_{m}},\exists M\in\mathbb{Z}:\ a_{M}=O(a_{m}^{2}).
\]
Then, we can define the AG $\B:=\mathcal{Z}:=\left\{ x\in\R^{I^{\srm}}\mid\exists m\in\mathbb{Z}:\ x_{\eps}=O(a_{m\eps})\right\} $,
and this yields
\[
\mathcal{A}_{a}(\Coo,S)=\mathcal{G}(\B,\mathcal{Z},-).
\]
Finally, let $r:\N\ra\R_{>0}$ be a sequence of weights (see \cite{DPHV})
and, instead of the set of indices $\mathbb{I}^{\srm}$, we consider
the set of indices $\mathbb{I}^{\infty}:=(\N,\le,\mathcal{F})$, where
$\le$ is the usual order relation on $\N$ and $\mathcal{F}$ is
the Fr\'echet filter on $\N$. Setting
\[
\B:=\mathcal{Z}:=\{x\in\R^{\N}\mid\forall p\in S:\ \vvvert x\vvvert_{p,r}<\infty\}
\]
we get an AG and
\[
\mathcal{G}_{S,r}=\mathcal{G}(\B,\mathcal{Z},-).
\]
See \cite{DPHV} for more details and for the notations $\vvvert-\vvvert_{p,r}$
and $\mathcal{G}_{S,r}$.

\section{Embeddings of Distributions}

In this section we let $\Omega\subseteq\mathbb{R}^{n}$ be a fixed
open set and we let $\B,\mathcal{Z}$ be two fixed asymptotic gauges
with $\R_{M}(\B)\subseteq\R_{M}(\Z)$. We will define an embedding

\[
i:\mathcal{D}'(\Omega)\rightarrow\GBZ
\]

\noindent by slightly modifying the construction usually considered
in $\mathcal{G}^{s}(\Omega)$. Our construction will follow the same
approach used by \cite{GKOS}. We start by defining our mollificator.
\begin{defn}
\label{def:b_rho_rho_eps}Let $b\in\B_{>0}$ be infinite: $\lim_{\mathbb{I}}b=+\infty$;
for simplicity, we can assume that $b_{\eps}>0$ for all $\eps\in I$.
Let $\rho\in\mathcal{S}(\mathbb{R}^{n})$ be such that 
\begin{enumerate}[leftmargin=*,label=(\roman*),align=left ]
\item $\int\rho(x)\diff{x}=1$;
\item $\int\rho(x)x^{k}\diff{x}=0$ for every $k\geq1$. 
\end{enumerate}

\noindent We set 
\[
\rho_{\eps}:=b_{\eps}^{-1}\odot\rho,
\]
so that $\rho_{\eps}(x)=b_{\eps}^{n}\cdot\rho(b_{\eps}x)$ for all
$x\in\R^{n}$.

\end{defn}
\begin{lem}
\label{lem:convAndLimInD'}Let $w\in\mathcal{\mathcal{E}}'(\Omega)$
and $\phi\in\D(\Omega)$, then
\[
\lim_{\eps\in\mathbb{I}}\int_{\Omega}(w\ast\rho_{\eps})\cdot\phi=\langle w,\phi\rangle.
\]
\end{lem}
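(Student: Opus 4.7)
The plan is to reduce the claim to the classical fact that a Schwartz-type mollifier regularizes test functions to themselves in $\mathcal{E}(\mathbb{R}^n)$, and then to transport this convergence to the set-of-indices framework using Def.~\ref{def:limit}.

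First, I would rewrite the integral as a duality pairing. Setting $\check\rho_\eps(y):=\rho_\eps(-y)$ and using Fubini's theorem, which is legitimate because $w\in\mathcal{E}'(\Omega)$ has compact support and the integrand is the product of the compactly supported $\phi$ with the smooth function $(w\ast\rho_\eps)(x)=\langle w_y,\rho_\eps(x-y)\rangle$, one obtains
\[
\int_\Omega (w\ast\rho_\eps)(x)\,\phi(x)\diff{x}=\langle w,\check\rho_\eps\ast\phi\rangle.
\]

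Next, I would establish the (essentially classical) fact that $\check\rho_\eps\ast\phi\to\phi$ in $\mathcal{E}(\mathbb{R}^n)$ in the sense of Def.~\ref{def:limit}, that is, for every $K\Subset\mathbb{R}^n$ and every $\alpha\in\N^n$,
\[
\lim_{\eps\in\mathbb{I}}\sup_{x\in K}\left|\partial^\alpha(\check\rho_\eps\ast\phi-\phi)(x)\right|=0.
\]
The classical argument writes $(\check\rho_\eps\ast\phi-\phi)(x)=\int\rho(z)\bigl(\phi(x+b_\eps^{-1}z)-\phi(x)\bigr)\diff{z}$; a Taylor expansion of $\phi$ combined with $\int\rho=1$, the vanishing of all positive moments of $\rho$, and the rapid decay of $\rho$, yields $\sup_{x\in K}|\partial^\alpha(\check\rho_t\ast\phi-\phi)(x)|\to 0$ as the classical dilation parameter $t\to 0^+$ (indeed with arbitrary polynomial rate). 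Since $b\in\B_{>0}$ is infinite, Thm.~\ref{thm:limitProp} together with Def.~\ref{def:limit} gives $\lim_{\mathbb{I}}b_\eps^{-1}=0$, so for every $t_0>0$ we have $b_\eps^{-1}<t_0$ for $\eps$ sufficiently small in $\mathbb{I}$, converting the classical estimate into the required set-of-indices limit.

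Finally, I would invoke the continuity of $w$: there exist $K'\Subset\mathbb{R}^n$, $N\in\N$ and $C>0$ such that $|\langle w,\psi\rangle|\le C\max_{|\alpha|\le N}\sup_{x\in K'}|\partial^\alpha\psi(x)|$ for every $\psi\in\mathcal{C}^\infty(\mathbb{R}^n)$. Applying this with $\psi=\check\rho_\eps\ast\phi-\phi$ and combining with the previous step gives $\langle w,\check\rho_\eps\ast\phi\rangle\to\langle w,\phi\rangle$ in $\mathbb{I}$, which is the claim. The only genuine technical content is the classical mollifier estimate of the middle paragraph; once $\lim_{\mathbb{I}}b_\eps^{-1}=0$ is in hand, translation into the set-of-indices language is automatic.
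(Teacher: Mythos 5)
Your proof is correct and takes essentially the same approach as the paper: both reduce to a mollification estimate via the change of variables $t\mapsto b_\eps t$ together with $\lim_{\mathbb{I}}b_\eps^{-1}=0$. You merely make explicit what the paper compresses into ``continuity of convolution''---namely the adjoint identity $\langle w\ast\rho_\eps,\phi\rangle=\langle w,\check\rho_\eps\ast\phi\rangle$ followed by the $\mathcal{E}'$-seminorm bound on $w$---and you invoke the vanishing moments of $\rho$ via Taylor expansion, although uniform continuity of $\phi$ alone (as in the paper's estimate) already gives the limit $0$; the moment conditions become essential only for the polynomial-rate estimate \eqref{eq:TaylorEmbedding} used later in the embedding theorem.
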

\begin{proof}
As usual, by the continuity of convolution, the conclusion is equivalent
to
\begin{equation}
\lim_{\eps\le e}\int\rho_{\eps}\cdot\phi=\phi(0),\label{eq:limitDelta}
\end{equation}
where $e\in I$. In fact, this would prove that $(\rho_{\eps})_{\eps\le e}\to\delta$
in $\D'$ with respect to the directed set $(\emptyset,e]$. To prove
\eqref{eq:limitDelta}, we consider
\begin{align*}
\left|\int\rho_{\eps}(x)\phi(x)\diff{x}-\phi(0)\right| & =\left|\int\rho(t)\left[\phi\left(\frac{t}{b_{\eps}}\right)-\phi(0)\right]\diff{t}\right|\le\\
 & \le\int\left|\rho(t)\right|\left|\phi\left(\frac{t}{b_{\eps}}\right)-\phi(0)\right|\diff{t}.
\end{align*}
Since $\phi$ is compactly supported, we have
\begin{align*}
\forall r\in\R_{>0}\,\exists\eps_{0}\le e\,\forall\eps\le\eps_{0}\, & \forall t\in\text{supp}\phi:\ \left|\phi\left(\frac{t}{b_{\eps}}\right)-\phi(0)\right|<r,
\end{align*}
 so 
\begin{align*}
\forall r\in\R_{>0}\,\exists\eps_{0}\le e\,\forall\eps\le\eps_{0}:\ \left|\int\rho_{\eps}(x)\phi(x)\diff{x}-\phi(0)\right|\le r\int\left|\rho\right|.
\end{align*}
\end{proof}
\begin{thm}
\label{thm:linearEmbedding}For every open $\Omega\subseteq\mathbb{R}^{n}$
the map 
\begin{align}
i_{0}:\mathcal{E}'(\Omega) & \rightarrow\GBZ\nonumber \\
w & \mapsto\left[(w\ast\rho_{\eps})|_{\Omega}\right]\label{eq:embeddingOnCmptSuppDistr}
\end{align}
is a linear embedding.\end{thm}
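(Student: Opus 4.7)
The plan is to verify three things in turn: that the map $i_0$ takes values in $\mathcal{E}_M(\mathcal{B},\Omega)$ (so that the target makes sense); that it is linear (immediate from the linearity of convolution); and that it descends to an injection modulo $\mathcal{N}(\mathcal{Z},\Omega)$, i.e.\ that $(w*\rho_\eps)|_\Omega \in \mathcal{N}(\mathcal{Z},\Omega)$ forces $w=0$ in $\mathcal{D}'(\Omega)$. The real content is therefore moderateness and injectivity.

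For moderateness I would invoke the structure theorem for compactly supported distributions: $w\in\mathcal{E}'(\Omega)$ has some finite order $k\in\mathbb{N}$ and is supported in a compact $L\subseteq\Omega$, with
\[
|\langle w,\psi\rangle|\le C\sum_{|\beta|\le k}\sup_{L}|\partial^\beta\psi|
\]
for all smooth $\psi$. Applying this to $\psi(y)=(\partial^\alpha\rho_\eps)(x-y)$, and combining with $\partial^\gamma\rho_\eps(y)=b_\eps^{\,n+|\gamma|}(\partial^\gamma\rho)(b_\eps y)$, yields, for every $K\Subset\Omega$ and every $\alpha\in\mathbb{N}^n$, a pointwise bound of the shape
\[
\sup_{x\in K}|\partial^\alpha(w*\rho_\eps)(x)|\le C'\cdot b_\eps^{\,n+|\alpha|+k},
\]
where $C'$ depends on $\rho$, $w$, $K$, $\alpha$ but not on $\eps$. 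Iterating Definition \ref{def:asymptoticGauge}.\ref{enu:AGProduct} (and using Theorem \ref{thm:acr}, which gives that $\mathcal{R}_M(\mathcal{B})$ is a ring) produces some $d\in\mathcal{B}$ with $b_\eps^{\,n+|\alpha|+k}=O(d_\eps)$ as $\eps\in\mathbb{I}$, establishing $(w*\rho_\eps)|_\Omega\in\mathcal{E}_M(\mathcal{B},\Omega)$.

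For injectivity, suppose $(w*\rho_\eps)|_\Omega\in\mathcal{N}(\mathcal{Z},\Omega)$, fix an arbitrary test function $\phi\in\mathcal{D}(\Omega)$ and set $K=\mathrm{supp}(\phi)$. Then for every $z\in\mathcal{Z}_{>0}$,
\[
\left|\int_\Omega(w*\rho_\eps)(x)\,\phi(x)\,\diff{x}\right|\le\|\phi\|_{L^1}\cdot\sup_{x\in K}|(w*\rho_\eps)(x)|=O(z_\eps^{-1}).
\]
Definition \ref{def:asymptoticGauge}.\ref{enu:AGinfinite} supplies an infinite net in $\mathcal{Z}$, which by Lemma \ref{lem:AGabsSum}.\ref{enu:AGabs} is dominated by a positive $a\in\mathcal{Z}_{>0}$; the domination inequality shows that $a$ is itself infinite, so $a_\eps^{-1}\to0$ in $\mathbb{I}$. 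Consequently $\lim_{\mathbb{I}}\int_\Omega(w*\rho_\eps)\phi=0$, and Lemma \ref{lem:convAndLimInD'} identifies this limit as $\langle w,\phi\rangle$. Hence $\langle w,\phi\rangle=0$ for every $\phi\in\mathcal{D}(\Omega)$, i.e.\ $w=0$.

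I expect the moderateness step to be the main obstacle, since it is the only place where one must reconcile the concrete power growth $b_\eps^{\,n+|\alpha|+k}$, dictated by the finite order of $w$, with the abstract closure properties of the asymptotic gauge $\mathcal{B}$; the key fact to extract is that arbitrary positive integer powers of an element of $\mathcal{B}$ remain asymptotically dominated by elements of $\mathcal{B}$, which is exactly what Definition \ref{def:asymptoticGauge}.\ref{enu:AGProduct} combined with Theorem \ref{thm:acr} provides.
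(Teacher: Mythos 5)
Your proof follows the same three-part structure as the paper's (linearity; moderateness; triviality of the kernel via Lemma~\ref{lem:convAndLimInD'}), and all three steps are correct. The one place you diverge is the moderateness step: the paper invokes the representation $w=\partial^{\alpha}f$ with $f$ smooth and compactly supported, changes variables inside the convolution integral, and lands directly on the clean bound $O(b_{\eps}^{|\alpha|})$; you instead use the finite-order seminorm estimate for $\mathcal{E}'(\Omega)$ applied to $\psi(y)=(\partial^{\alpha}\rho_{\eps})(x-y)$, which gives the cruder exponent $b_{\eps}^{\,n+|\alpha|+k}$ (the factor $b_{\eps}^{n}$ that the paper absorbs via the change of variables $t=b_{\eps}y$ survives in your estimate). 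This is immaterial for the conclusion: once you know the bound is a fixed positive integer power of $b_{\eps}$, Theorem~\ref{thm:acr} (or an induction on Definition~\ref{def:asymptoticGauge}.\ref{enu:AGProduct}) shows it lies in $\R_{M}(\mathcal{B})$, exactly as you argue. Arguably your version is slightly more robust, since it avoids the delicate point of whether the structure-theorem representative $f$ may be taken in $\mathcal{D}(\Omega)$ rather than merely continuous. Your injectivity step also makes explicit a point the paper leaves implicit, namely why $\sup_{K}|w\ast\rho_{\eps}|=O(z_{\eps}^{-1})$ for all $z\in\mathcal{Z}_{>0}$ forces the limit in $\mathbb{I}$ to vanish: you correctly extract an infinite positive $a\in\mathcal{Z}_{>0}$ from Definition~\ref{def:asymptoticGauge}.\ref{enu:AGinfinite} together with Lemma~\ref{lem:AGabsSum}.\ref{enu:AGabs}.
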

\begin{proof}
We have to prove that 
\begin{enumerate}[leftmargin=*,label=(\roman*),align=left ]
\item $i_{0}$ is linear;
\item \label{enu:imageEmb}$\forall w\in\mathcal{E}'(\Omega):\ (w\ast\rho_{\eps})|_{\Omega}\in\mathcal{E}_{M}(\B,\Omega)$;
\item \label{enu:kerInj}$\text{Ker}(i_{0})=\{0\}$. 
\end{enumerate}
That $i_{0}$ is linear follows immediately by the definition, since
the convolution is a linear operator. Let us prove \ref{enu:imageEmb}.
By the local structure theorem for distributions, it suffices to consider
the case $w=\partial^{\alpha}f\in\mathcal{E}'(\Omega)$, with $f\in\D(\Omega)$
and $\alpha\in\N^{n}$. Let $x\in K\Subset\Omega$, then 
\begin{align*}
(w\ast\rho_{\eps})(x) & =f\ast\partial^{\alpha}\rho_{\eps}(x)=\int f(x-y)\partial^{\alpha}\rho_{\epsilon}(y)\diff{y}=\\
 & =\int f(x-y)b_{\eps}^{n+|\alpha|}(\partial^{\alpha}\rho)(b_{\eps}\cdot y)\diff{y}=\\
 & =b_{\eps}^{|\alpha|}\int f\left(x-\frac{t}{b_{\eps}}\right)\cdot\partial^{\alpha}\rho(t)\diff{t}=O(b_{\eps}^{|\alpha|})=O(c_{\eps}),
\end{align*}
for some $(c_{\eps})\in\B$ with $b_{\eps}^{|\alpha|}=O(c_{\eps})$.
The same argument applies to the derivative $\partial^{\beta}(f\ast\partial^{\alpha}\rho_{\eps})=f\ast\partial^{\alpha+\beta}\rho_{\eps}$.

To prove \ref{enu:kerInj}, let $w\in\mathcal{E}'(\Omega)$ be such
that $\left[\left(w\ast\rho_{\eps}\right)|_{\Omega}\right]=0$ and
let $\phi\in\D(\Omega)$. Thus, setting $K:=\text{supp}(\phi)$, we
have
\[
\forall z\in\mathcal{Z}_{>0}:\ \sup_{x\in K}\left|(w\ast\rho_{\eps})(x)\right|=O(z_{\eps}^{-1})
\]
and hence $\lim_{\eps\in\mathbb{I}}\sup_{x\in K}\left|(w\ast\rho_{\eps})(x)\right|=0$.
From this and Lemma \ref{lem:convAndLimInD'}, we hence obtain
\[
\left|\langle w,\phi\rangle\right|=\lim_{\eps\in\mathbb{I}}\left|\int_{\Omega}(w\ast\rho_{\eps})\phi\right|\le\lim_{\eps\in\mathbb{I}}\sup_{x\in K}\left|(w\ast\rho_{\eps})(x)\right|\cdot\int_{K}\left|\phi\right|=0.
\]

\end{proof}
Let us note that the embedding \eqref{eq:embeddingOnCmptSuppDistr}
depends on the open set $\Omega$. We will use the notation $i_{0\Omega}$
when we want to underline this dependence.

We denote by $\sigma$ the constant embedding of $\mathcal{C}^{\infty}(\Omega)$
into $\GBZ$, namely $\sigma(f)=[f]$. We would like to prove the
analogue of \cite[Prop. 1.2.11]{GKOS}. As usual, the idea is to start
with $f\in\D(\Omega)$ and to use Taylor's formula obtaining 
\begin{align}
(f\ast\rho_{\eps}-f)(x)= & \int(f(x-y)-f(x))\rho_{\eps}(y)\diff{y}=\nonumber \\
= & \int\left(f\left(x-\frac{t}{b_{\eps}}\right)-f(x)\right)\rho(t)\diff{t}=\nonumber \\
= & \int\sum\limits _{0<|\alpha|<m}\frac{1}{\alpha!}\left(-\frac{t}{b_{\eps}}\right)^{\alpha}\partial^{\alpha}f(x)\rho(t)\diff{t}+\nonumber \\
 & +\int\sum_{|\alpha|=m}\frac{1}{\alpha!}\left(-\frac{t}{b_{\eps}}\right)^{\alpha}\partial^{\alpha}f\left(x-\Theta\frac{t}{b_{\eps}}\right)\rho(t)\diff{t}=\nonumber \\
=\  & 0+b_{\eps}^{-m}\cdot\int\sum_{|\alpha|=m}\frac{1}{\alpha!}\left(-t\right)^{\alpha}\partial^{\alpha}f\left(x-\Theta\frac{t}{b_{\eps}}\right)\rho(t)\diff{t}=\nonumber \\
=\  & O\left(b_{\eps}^{-m}\right).\label{eq:TaylorEmbedding}
\end{align}
Therefore, to have $(f-(f\ast\rho_{\eps})|_{\Omega})\in\mathcal{N}(\mathcal{Z},\Omega)$
we need a further condition of the form
\[
\forall z\in\mathcal{Z}_{>0}\,\exists m\in\N:\ b_{\eps}^{-m}=O(z_{\eps}^{-1})\quad\text{i.e.}\quad z_{\eps}=O(b_{\eps}^{m}).
\]
This implies $\mathcal{Z}_{>0}\subseteq\R_{M}(\text{AG}(b))\subseteq\R_{M}(\B)$,
where $\text{AG}(b)$ is the AG 
\[
\text{AG}(b):=\{b^{m}\mid m\in\N\}.
\]
We have thus $\R_{M}(\mathcal{Z})=\R_{M}(\B)=\R_{M}(\text{AG}(b))$.
\begin{defn}
Let $\B$ be an AG. If $b\in\B$ is such that $\R_{M}(\text{AG}(b))\supseteq\R_{M}(\B)$
then we will say that $b$ is a \emph{generator of $\B$} and that
$\B$ is a \emph{principal AG.}
\end{defn}
Let us note that in the previous definition we could equivalently
substitute the condition $\R_{M}(\text{AG}(b))\supseteq\R_{M}(\B)$
with $\R_{M}(\text{AG}(b))=\R_{M}(\B)$. Moreover if $\B$ is principal
AG then, if necessary, we can always find a positive generator of
$\B$: in fact, if $b$ is any generator of \emph{$\B$} and $c\in\B_{>0},\ c\in O(|b|)$,
then also $c$ is a generator of \emph{$\B$}. 

Every AG of Ex. \ref{exa:AG}, other than $\B_{\text{fin}}^{\text{exp}}$
and $\hat{\B}_{\text{fin}}^{\text{exp}}$, is principal; for example,
$\eps^{-1}$ is a generator of $\mathcal{B}^{\srm}$. Moreover, a
solid subalgebra of $\R^{I}$ (containing an infinite net) generally
speaking is not a principal AG. In the latter case, the embedding
of distributions using a mollifier is not possible (see Thm. \ref{thm:principalNecessary}).

By Theorem \ref{thm:ograndi} we can also assume, without loss of
generality, that $\B=\mathcal{Z}$, which is the subject of our next
\begin{description}
\item [{Assumption}] \noindent $\mathcal{Z}=\B$ is a principal AG. Moreover
we assume that the mollifier $\left(\re\right)$ is constructed with
a fixed generator $b$.
\end{description}
Let us observe that a generator of an asymptotic gauge is necessarily
an infinite element in $\mathbb{R}^{I}$, i.e.\ $\lim_{\eps\in\mathbb{I}}b_{\eps}=\infty$,
and that every principal AG is totally ordered. As a first consequence
of our assumption, we have 
\begin{multline*}
\mathcal{N}(\mathcal{Z},\Omega)=\mathcal{N}(\B,\Omega)=\\
\left\{ u\in\mathcal{C}^{\infty}(\Omega)^{I}\mid\forall K\Subset\Omega\,\forall\alpha\in\mathbb{N}^{n}\,\forall m\in\mathbb{N}:\ \sup\limits _{x\in K}|\partial^{\alpha}u_{\eps}(x)|=O(b_{\eps}^{-m})\text{ as }\eps\in\mathbb{I}\right\} .
\end{multline*}
Henceforward, we will thus use the simplified notation $\mathcal{G}(\B,\Omega):=\mathcal{G}(\B,\B,\Omega)$.
\begin{thm}
$i_{0}|_{\mathcal{D}(\Omega)}=\sigma$. Consequently, $i_{0}$ is
an injective homomorphism of algebras on $\mathcal{D}(\Omega)$.\end{thm}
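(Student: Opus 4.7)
The goal is to prove that $(f\ast\re - f)|_\Omega\in\mathcal{N}(\B,\Omega)$ for every $f\in\mathcal{D}(\Omega)$, since this gives $i_0(f)=[(f\ast\re)|_\Omega]=[f]=\sigma(f)$ in $\mathcal{G}(\B,\Omega)$. Using the explicit description of $\mathcal{N}(\B,\Omega)=\mathcal{N}(\Z,\Omega)$ recalled just before the theorem, once we fix $K\Subset\Omega$, $\alpha\in\mathbb{N}^n$ and an arbitrary $m\in\mathbb{N}$, the task reduces to proving $\sup_{x\in K}|\partial^\alpha(f\ast\re - f)(x)|=O(b_\eps^{-m})$ as $\eps\in\mathbb{I}$. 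Extending $f$ by zero to an element of $\mathcal{D}(\mathbb{R}^n)$ and using $\partial^\alpha(f\ast\re)=(\partial^\alpha f)\ast\re$, we may replace $f$ by $g:=\partial^\alpha f\in\mathcal{D}(\mathbb{R}^n)$ and work with $\alpha=0$.

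The heart of the argument is then exactly the Taylor computation already displayed in \eqref{eq:TaylorEmbedding}, pushed to order $m$: after the change of variable $y=t/b_\eps$ in $(g\ast\re)(x)=\int g(x-y)\re(y)\diff{y}$, the contribution of order $0$ reproduces $g(x)$ thanks to $\int\rho=1$, each contribution of order $1,\ldots,m-1$ vanishes because of the moment hypothesis $\int\rho(t)t^\beta\diff{t}=0$ for $1\le|\beta|\le m-1$, and one is left with the remainder
\[
(g\ast\re - g)(x)=b_\eps^{-m}\int\sum_{|\beta|=m}\frac{(-t)^\beta}{\beta!}\,\partial^\beta g\bigl(x-\Theta t/b_\eps\bigr)\,\rho(t)\diff{t}
\]
for some $\Theta=\Theta(x,t,\eps)\in(0,1)$. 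Since $g\in\mathcal{D}(\mathbb{R}^n)$, each $\|\partial^\beta g\|_{L^\infty(\mathbb{R}^n)}$ is finite, and since $\rho\in\maS(\mathbb{R}^n)$ the integral $\int|t|^m|\rho(t)|\diff{t}$ is finite; bounding the right hand side uniformly in $x\in K$ therefore yields $\sup_{x\in K}|(g\ast\re-g)(x)|\le C_m\, b_\eps^{-m}$. As $m$ is arbitrary, this is exactly the negligibility condition.

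For the consequence, once $i_0|_{\mathcal{D}(\Omega)}=\sigma|_{\mathcal{D}(\Omega)}$ is established, $i_0$ is automatically an algebra homomorphism on $\mathcal{D}(\Omega)$: the ring operations of $\mathcal{G}(\B,\Omega)$ are induced from the pointwise operations of $\mathcal{C}^\infty(\Omega)^I$, and $\sigma$ is the constant-net embedding, hence $\sigma(fg)=\sigma(f)\sigma(g)$; injectivity on $\mathcal{D}(\Omega)\subset\mathcal{E}'(\Omega)$ is a restriction of the injectivity proved for $i_0$ in Thm. \ref{thm:linearEmbedding}. There is no genuine obstacle, only bookkeeping: the principality of $\B$ and the choice of a fixed generator $b$ are precisely what allow the arbitrary-order bound $O(b_\eps^{-m})$ to translate into negligibility with respect to every $z\in\Z_{>0}$, and the only point requiring care is that all constants in the Taylor estimate are independent of $\eps$ and uniform in $x\in K$, which is automatic since every derivative of a test function is again a test function.
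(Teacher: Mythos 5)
Your proof is correct and follows essentially the same route as the paper: both rely on the Taylor computation displayed in \eqref{eq:TaylorEmbedding}, with the change of variables $y = t/b_\eps$, the vanishing of the moments of $\rho$, and the $O(b_\eps^{-m})$ bound on the remainder, which translates into negligibility precisely because $\B = \mathcal{Z} = \text{AG}(b)$ is a principal AG. The only difference is bookkeeping: you make explicit the reduction to $\alpha = 0$ via $\partial^\alpha(f\ast\rho_\eps) = (\partial^\alpha f)\ast\rho_\eps$, a step the paper leaves implicit when it says ``the remaining part is proved in \eqref{eq:TaylorEmbedding}.''
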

\begin{proof}
The second statement follows from the first like in \cite{GKOS}.
The remaining part is proved in \eqref{eq:TaylorEmbedding}.
\end{proof}
\noindent The notions of support $\text{supp}(u)$ of a generalized
function $u\in\mathcal{G}(\B,\Omega)$ and of restriction $u|_{\Omega'}\in\mathcal{G}(\B,\Omega')$
can be defined exactly like in \cite[pag. 12]{GKOS}.
\begin{thm}
If $w\in\mathcal{E}'(\Omega)$ then $\text{\text{\emph{supp}}}(w)=\text{\emph{\text{supp}}}(i_{0}(w))$.\end{thm}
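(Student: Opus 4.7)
The plan is to prove the two inclusions $\text{supp}(i_0(w)) \subseteq \text{supp}(w)$ and $\text{supp}(w) \subseteq \text{supp}(i_0(w))$ separately. The second inclusion is the easy direction: if $x \notin \text{supp}(i_0(w))$, I pick an open neighbourhood $U$ of $x$ on which $(w\ast\rho_{\eps})|_{U}\in\mathcal{N}(\B,U)$. For an arbitrary $\phi\in\mathcal{D}(U)$ with compact support $K\Subset U$, I would combine Lemma \ref{lem:convAndLimInD'} (which gives $\langle w,\phi\rangle=\lim_{\eps\in\mathbb{I}}\int_{\Omega}(w\ast\rho_{\eps})\phi$) with the estimate $\sup_{x\in K}|(w\ast\rho_{\eps})(x)|=O(b_{\eps}^{-1})$, which tends to $0$ because $\lim_{\mathbb{I}}b=+\infty$. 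This forces $\langle w,\phi\rangle=0$ for every $\phi\in\mathcal{D}(U)$, hence $w|_{U}=0$, i.e.\ $x\notin\text{supp}(w)$.

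For the first inclusion, I take $x\notin\text{supp}(w)$ and pick an open neighbourhood $U$ of $x$ disjoint from $\text{supp}(w)$. I must show that $(w\ast\rho_{\eps})|_{U}\in\mathcal{N}(\B,U)$. Fix $K\Subset U$, and let $K_{0}:=\text{supp}(w)$, so $d:=\text{dist}(K,K_{0})>0$. Since $w\in\mathcal{E}'(\Omega)$ has finite order $k$, standard distribution theory gives an estimate of the form
\[
|(w\ast\rho_{\eps})(y)|=|\langle w_{t},\rho_{\eps}(y-t)\rangle|\le C\sum_{|\alpha|\le k}\sup_{t\in K_{0}}|\partial_{t}^{\alpha}\rho_{\eps}(y-t)|
\]
for $y\in K$, and analogously for arbitrary derivatives $\partial^{\beta}(w\ast\rho_{\eps})=w\ast\partial^{\beta}\rho_{\eps}$. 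Now $\partial^{\alpha}\rho_{\eps}(z)=b_{\eps}^{n+|\alpha|}(\partial^{\alpha}\rho)(b_{\eps}z)$, and since $\rho\in\mathcal{S}(\mathbb{R}^{n})$, every Schwartz seminorm gives $|\partial^{\alpha}\rho(u)|\le C_{\alpha,N}(1+|u|)^{-N}$. For $y\in K$ and $t\in K_{0}$ we have $|b_{\eps}(y-t)|\ge b_{\eps}d$, hence
\[
\sup_{y\in K,\,t\in K_{0}}|\partial_{t}^{\alpha}\rho_{\eps}(y-t)|\le C_{\alpha,N}\cdot b_{\eps}^{n+|\alpha|}(1+b_{\eps}d)^{-N}.
\]

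For any prescribed $m\in\N$, I choose $N$ large enough that $n+k+|\beta|-N\le-m$; since $b_{\eps}\to\infty$ in $\mathbb{I}$, the right hand side is then $O(b_{\eps}^{-m})$ as $\eps\in\mathbb{I}$. This shows $\sup_{y\in K}|\partial^{\beta}(w\ast\rho_{\eps})(y)|=O(b_{\eps}^{-m})$ for every $K\Subset U$, $\beta\in\N^{n}$ and $m\in\N$, which is exactly the negligibility condition under our Assumption that $\B$ is principal with generator $b$. Therefore $(w\ast\rho_{\eps})|_{U}\in\mathcal{N}(\B,U)$ and $x\notin\text{supp}(i_{0}(w))$.

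The step I expect to be the main obstacle is the quantitative estimate on $(w\ast\rho_{\eps})$ away from $\text{supp}(w)$: because $\rho\in\mathcal{S}(\mathbb{R}^{n})$ is \emph{not} compactly supported, one cannot just say that $\rho_{\eps}(y-\cdot)$ eventually vanishes on $\text{supp}(w)$. The subtlety is to exploit Schwartz decay together with the fact that $w$ has finite order, in order to trade the polynomial blow-up $b_{\eps}^{n+|\alpha|}$ from the scaling of $\rho$ against the arbitrarily fast decay $(1+b_{\eps}d)^{-N}$, and to re-express the resulting gain $b_{\eps}^{-(N-n-|\alpha|)}$ in the form $b_{\eps}^{-m}$ required by negligibility through the generator $b$ of $\B$.
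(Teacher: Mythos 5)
Your proof is correct, and for the inclusion $\text{supp}(i_0(w))\subseteq\text{supp}(w)$ it takes a genuinely different route from the paper. The paper invokes the local structure theorem to write $w=\partial^{\alpha}f$ with $f\in\D(\R^n)$ compactly supported away from $K$, and then splits the resulting integral $b_{\eps}^{|\alpha|}\int f\left(x-\tfrac{t}{b_{\eps}}\right)\partial^{\alpha}\rho(t)\diff{t}$ into the two regions $|t|<\sqrt{b_{\eps}}$ and $|t|\ge\sqrt{b_{\eps}}$: in the first region the integrand vanishes eventually because $|t/b_{\eps}|<b_{\eps}^{-1/2}\to 0$ and $x-\tfrac{t}{b_{\eps}}$ stays outside $\text{supp}(f)$, while in the second region Schwartz decay of $\partial^{\alpha}\rho$ produces the gain $b_{\eps}^{-m}$. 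You instead skip the structure theorem and the splitting entirely: you use the finite-order estimate $|(w\ast\rho_{\eps})(y)|\le C\sum_{|\alpha|\le k}\sup_{t\in K_0}|\partial_t^{\alpha}\rho_{\eps}(y-t)|$, observe that the distance $d=\text{dist}(K,K_0)>0$ makes $|b_{\eps}(y-t)|\ge b_{\eps}d$ everywhere in the relevant range, and then trade the polynomial growth $b_{\eps}^{n+|\alpha|}$ from the scaling against the arbitrary Schwartz decay $(1+b_{\eps}d)^{-N}$. This is arguably cleaner, since the uniform lower bound $|y-t|\ge d$ does directly what the paper's $\sqrt{b_{\eps}}$-splitting achieves in two steps, but both arguments rest on the same engine, namely the Schwartz decay of $\rho$ combined with the fact that $b_{\eps}\to\infty$. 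For the other inclusion $\text{supp}(w)\subseteq\text{supp}(i_0(w))$, your argument is the contrapositive of the paper's: the paper fixes $x_0\in\text{supp}(w)$, extracts $\varphi$ with $|\langle w,\varphi\rangle|=c>0$, and derives a contradiction from negligibility via $\frac{c}{2}\le\sup_K|(w\ast\rho_\eps)|\int_K|\varphi|$; you start from $x\notin\text{supp}(i_0(w))$ and conclude $\langle w,\phi\rangle=0$ for all $\phi\in\D(U)$ from the same ingredients (Lemma \ref{lem:convAndLimInD'} and $\sup_K|(w\ast\rho_\eps)|\to 0$). These are logically the same argument presented in two directions.
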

\begin{proof}
Let us prove that $\text{\text{supp}}(i_{0}(w))\subseteq\text{\text{supp}}(w)$.
We have to prove that 
\[
i_{0}(w)|_{\text{\text{supp}}(w)^{c}}=0
\]
in $\mathcal{G}(\B,\text{\text{supp}}(w)^{c})$. Let $K\Subset\text{\text{supp}}(w)^{c}$,
let $\alpha\in\mathbb{N}^{n}$ be such that $w=\partial^{\alpha}f$,
with $f\in\D(\mathbb{R}^{n}\setminus K)$. Then $i_{0}(w)=\left[(f\ast\partial^{\alpha}\rho_{\eps})|_{\Omega}\right]$
and 
\begin{align*}
(f\ast\partial^{\alpha}\rho_{\eps})(x) & =\int f(x-y)\partial^{\alpha}\rho_{\eps}(y)\diff{y}=\\
 & =\int f(x-y)\cdot b_{\eps}^{|\alpha|+n}\partial^{\alpha}\rho(b_{\eps}y)\diff{y}=\\
 & =\int f\left(x-\frac{t}{b_{\eps}}\right)\cdot b_{\eps}^{|\alpha|}\partial\rho(t)\diff{t}=\\
 & =\int_{|t|<\sqrt{b_{\eps}}}f\left(x-\frac{t}{b_{\eps}}\right)\cdot b_{\eps}^{|\alpha|}\partial\rho(t)\diff{t}+\\
 & +\int_{|t|\geq\sqrt{b_{\eps}}}f\left(x-\frac{t}{b_{\eps}}\right)\cdot b_{\eps}^{|\alpha|}\partial\rho(t)\diff{t}.
\end{align*}
Recall that $b_{\eps}>0$ for each $\eps\in I$. Since $\text{\text{supp}}(f)\cap K=\emptyset$,
if $x\in K$ there exist $A\in\mathcal{I}$ such that for each $a\in A$
\[
\forall^{\mathbb{I}}\eps\in A_{\le a}\,\forall t:\ |t|<\sqrt{b_{\eps}}\then x-\frac{t}{b_{\eps}}\notin\text{supp}(f),
\]
so the first integral is zero. For the same $\eps$ sufficiently small,
the second integral can be estimated as follows: 
\[
\int_{|t|\geq\sqrt{b_{\eps}}}f\left(x-\frac{t}{b_{\eps}}\right)\cdot b_{\eps}^{|\alpha|}\partial\rho(t)\diff{t}\leq b_{\eps}^{|\alpha|}\cdot\left\Vert f\right\Vert _{\infty}\cdot\int_{|t|\geq\sqrt{b_{\eps}}}\left|\partial^{|\alpha|}\rho(t)\right|\diff{t}.
\]
Since $\rho\in\mathcal{S}(\mathbb{R}^{n})$ for any $m\in\N$ there
exists a constant $c_{m}>0$ such that $\left|\partial^{|\alpha|}\rho(t)\right|\leq c_{m}(1+|t|)^{-2m-n-1}$.
Thus $\left|\partial^{|\alpha|}\rho(t)\right|\leq c_{m}\left(\sqrt{b_{\eps}}\right)^{-2m}(1+|t|)^{-n-1}$
for $|t|\ge\sqrt{b_{\eps}}$, and 
\[
\int_{|t|\geq\sqrt{b_{\eps}}}b_{\eps}^{|\alpha|}\left|\partial^{|\alpha|}\rho(t)\right|\diff{t}\leq c_{m}b_{\eps}^{|\alpha|-m}\cdot\int_{|t|\geq\sqrt{b_{\eps}}}(1+|t|)^{-n-1}\diff{t}=\widetilde{c_{m}}\cdot b_{\eps}^{|\alpha|-m}.
\]
Since $m$ is arbitrary we can treat the derivative of $i_{0}(w)$
in the same way, and this gives the desired estimates that show that
$\text{\text{\text{supp}}}(i_{0}(w))\subseteq\text{\text{\text{supp}}}(w)$.

Let us now prove that $\text{\text{supp}}(w)\subseteq\text{\text{supp}}(i_{0}(w))$.
Let $x_{0}\in\text{\text{supp}}(w)$. For every $\eta>0$ there exists
$\varphi\neq0$ in $\mathcal{D}(\mathbb{R})$ such that $\text{\text{supp}}(\varphi)\subseteq B_{\eta}(x_{0})$
and $|\langle w,\varphi\rangle|=:c>0$. Since $\lim_{\eps\in\mathbb{I}}w\ast\rho_{\eps}=w$
in $\D'(\Omega)$ (Lemma \ref{lem:convAndLimInD'}), this implies
that $|\langle w\ast\rho_{\eps},\varphi\rangle|>\frac{c}{2}$ for
$\eps\in\mathbb{I}$ small. But setting $K:=\overline{B_{\eta}(x_{0})}$
we have
\begin{equation}
0<\frac{c}{2}<\left|\langle w\ast\rho_{\eps},\phi\rangle\right|=\left|\int_{K}(w\ast\rho_{\eps})(x)\cdot\phi(x)\diff{x}\right|\le\sup_{x\in K}\left|(w\ast\rho_{\eps})(x)\right|\cdot\int_{K}\left|\phi\right|.\label{eq:ineq2ndInclusionSupp}
\end{equation}
The equality $\left[(w\ast\rho_{\eps})|_{B_{\eta}(x_{0})}\right]=0$
in $\mathcal{G}(\B,B_{\eta}(x_{0}))$ would imply
\[
\lim_{\eps\in\mathbb{I}}\sup_{x\in K}\left|(w\ast\rho_{\eps})(x)\right|=0,
\]
which is impossible by \eqref{eq:ineq2ndInclusionSupp}, so $i_{0}(w)|_{B_{\eta}(x_{0})}=\left[(w\ast\rho_{\eps})|_{B_{\eta}(x_{0})}\right]\neq0$
and therefore $x_{0}\in\text{\text{supp}}(i_{0}(w))$.
\end{proof}
To prove that $i_{0}$ can be extended to an embedding $i:\mathcal{D}'(\Omega)\rightarrow\mathcal{G}$
we can now use the following result, whose proof can be conducted
exactly like in \cite{GKOS}:
\begin{thm}
\ 
\begin{enumerate}[leftmargin=*,label=(\roman*),align=left ]
\item $\mathcal{G}(\B,-):\Omega\mapsto\mathcal{G}(\B,\Omega)$ is a sheaf
of differential algebras on $\R^{n}$;
\item There is a unique sheaf morphism of real vector spaces $i:\D'\ra\mathcal{G}(\B,-)$
such that:

\begin{enumerate}
\item $i$ extends the embedding $i_{0}:\mathcal{E}'\ra\mathcal{G}(\B,-)$
defined in \eqref{eq:embeddingOnCmptSuppDistr}, i.e.\ such that
$i_{\Omega}|_{\mathcal{E}'(\Omega)}=i_{0\Omega}$.
\item $i$ commutes with partial derivatives, i.e.\ $\partial^{\alpha}\left(i_{\Omega}(T)\right)=i_{\Omega}\left(\partial^{\alpha}T\right)$
for each $T\in\D'(\Omega)$ and $\alpha\in\N$.
\item $i|_{\Coo(-)}:\Coo(-)\ra\mathcal{G}(\B,-)$ is a sheaf morphism of
algebras.
\end{enumerate}
\end{enumerate}
\end{thm}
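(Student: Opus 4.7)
The plan is to follow the four-step argument of \cite{GKOS}, which rests only on (a) standard properties of convolution with a Schwartz mollifier, (b) the sheaf structure of $\Coo$, and (c) closure of $\mathcal{E}_{M}(\B,-)$ and $\mathcal{N}(\B,-)$ under finite sums, products, restrictions, and multiplication by smooth cut-offs. Items (a)--(b) are classical; (c) has been secured in Lem.~\ref{lem:moderateNegligibleRings} and Thm.~\ref{thm:equivCondForIdeal}, with the crucial control of sums coming from Def.~\ref{def:asymptoticGauge}.\ref{enu:AGabsSum}. Consequently the GKOS proof transfers with essentially no conceptual change.

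To show that $\mathcal{G}(\B,-)$ is a sheaf of differential algebras, one verifies separation and gluing over an open cover $\Omega=\bigcup_\alpha\Omega_\alpha$. Separation follows by covering each $K\comp\Omega$ by finitely many $\Omega_{\alpha_i}$ and combining the corresponding negligibility estimates. For gluing, given a compatible family $(u_\alpha)_\alpha$, pick a locally finite partition of unity $(\chi_j)_j$ subordinate to a refinement, choose representatives $(u_{\alpha(j),\eps})$, and set $u_\eps:=\sum_j\chi_j\,u_{\alpha(j),\eps}$. Only finitely many $\chi_j$ contribute on any $K\comp\Omega$, so moderateness follows from the ring structure, and $u|_{\Omega_\alpha}-u_\alpha\in\mathcal{N}(\B,\Omega_\alpha)$ follows from the partition-of-unity identity together with the fact that $\mathcal{N}(\B,-)$ is a multiplicative ideal in $\mathcal{E}_M(\B,-)$. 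Componentwise derivations and products are manifestly sheaf morphisms.

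Next I would construct $i_\Omega:\D'(\Omega)\to\mathcal{G}(\B,\Omega)$ by localization. For each $V\comp\Omega$ fix a cut-off $\psi_V\in\D(\Omega)$ with $\psi_V\equiv 1$ on a neighbourhood of $\overline V$, and set $i_V(T):=i_{0\Omega}(\psi_V T)|_V$. Independence of $\psi_V$ and compatibility of $i_{V_1}$ with $i_{V_2}$ on $V_1\cap V_2$ both reduce, by linearity, to the implication ``$\supp w\cap\overline V=\emptyset \Rightarrow i_{0\Omega}(w)|_V=0$'', which is exactly the inclusion $\supp(i_0(w))\subseteq\supp(w)$ of the previous theorem. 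The sheaf property then glues the family $\{i_V(T)\}_V$ into a unique $i_\Omega(T)$. Extension of $i_0$: for $T\in\mathcal{E}'(\Omega)$, taking $\psi_V\equiv 1$ on a neighbourhood of $\supp T\cup\overline V$ yields $\psi_V T=T$, hence $i_\Omega(T)|_V=i_{0\Omega}(T)|_V$ for every $V$. Commutation with $\partial^\alpha$ follows from $\partial^\alpha(T*\rho_\eps)=(\partial^\alpha T)*\rho_\eps$ and from $\partial^\alpha\psi_V=0$ on $V$. Agreement with $\sigma$ on $\Coo$ reduces, via $\psi_V f\in\D(\Omega)$ coinciding with $f$ on $V$, to the already established identity $i_0|_{\D(\Omega)}=\sigma$. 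Uniqueness is forced by the chain $i'_\Omega(T)|_V=i'_V(T|_V)=i'_V((\psi_V T)|_V)=i'_\Omega(\psi_V T)|_V=i_{0\Omega}(\psi_V T)|_V=i_\Omega(T)|_V$, valid for every sheaf morphism $i'$ extending $i_0$ and every $V\comp\Omega$.

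The only genuinely delicate ingredient is the support computation invoked in the localization step: $w\ast\rho_\eps$ evaluated away from $\supp w$ must decay faster than any inverse of a net in $\mathcal{Z}_{>0}$. This is precisely where the principality assumption is used, since the Schwartz decay of $\rho$ produces estimates of order $b_\eps^{|\alpha|-m}$ for arbitrary $m$, and such estimates qualify as negligible because $\mathcal{Z}_{>0}\subseteq\R_M(\text{AG}(b))$. As this estimate has already been carried out in the previous theorem, the remainder of the proof is standard sheaf-theoretic bookkeeping.
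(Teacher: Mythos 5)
Your proposal is correct and follows exactly the route the paper intends: the paper itself dispenses with the argument by stating that the proof ``can be conducted exactly like in \cite{GKOS}'', and your write-up is precisely a faithful transfer of the GKOS construction (sheaf verification via partitions of unity, localization with cut-offs $\psi_V$, well-definedness and uniqueness reduced to the support inclusion $\supp(i_0(w))\subseteq\supp(w)$), correctly identifying the principality assumption as the place where the decay estimates for $w\ast\rho_\eps$ become negligibility. No gaps.
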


\subsection{Embedding with a strict $\delta$-net}

A simpler way to embed distributions is by means of a strict $\delta$-net
rather than a model $\delta$-net (i.e.\ a net obtained by scaling
a single function $\rho$). This can be done for a generic set of
indices and a principal AG simply by generalizing \cite[Lem. A1, Cor. A2]{SteVic09}:
\begin{thm}
\label{thm:strictDeltaNet}Let $\mathbb{I}$ be a set of indices and
$\B$ be a principal AG on $\mathbb{I}$ generated by $b\in\B_{>0}$.
There exists a net $\left(\psi_{\eps}\right)_{\eps\in I}$ of $\D(\R^{n})$
with the properties:
\begin{enumerate}[leftmargin=*,label=(\roman*),align=left ]
\item \label{enu:suppStrictDeltaNet}$supp(\psi_{\eps})\subseteq B_{1}(0)$
for all $\eps\in I$;
\item \label{enu:intOneStrictDeltaNet}$\int\psi_{\eps}=1$ for all $\eps\in I$;
\item \label{enu:moderateStrictDeltaNet}$\forall\alpha\in\N^{n}\,\exists p\in\N:\ \sup_{x\in\R^{n}}\left|\partial^{\alpha}\psi_{\eps}(x)\right|=O(b_{\eps}^{p})$
as $\eps\in\mathbb{I};$
\item \label{enu:momentsStrictDeltaNet}$\forall j\in\N\,\forall^{\mathbb{I}}\eps:\ 1\le|\alpha|\le j\Rightarrow\int x^{\alpha}\cdot\psi_{\eps}(x)\diff{x}=0;$
\item \label{enu:smallNegPartStrictDeltaNet}$\forall\eta\in\R_{>0}\,\forall^{\mathbb{I}}\eps:\ \int\left|\psi_{\eps}\right|\le1+\eta$.
\end{enumerate}

\noindent In particular
\[
\rho_{\eps}:=b_{\eps}^{-1}\odot\psi_{\eps}\quad\forall\eps\in I
\]
satisfies \ref{enu:intOneStrictDeltaNet} - \ref{enu:smallNegPartStrictDeltaNet}.

\end{thm}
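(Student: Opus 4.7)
The plan is to reduce the problem to a discrete construction, following the spirit of the original Steinbauer--Vickers argument but re-cast for a general set of indices. Concretely, I will first build, for each integer $m\ge 0$, a single compactly supported mollifier $\eta_m\in\mathcal{D}(B_1(0))$ that satisfies the ``$m$-th truncation'' of properties \ref{enu:suppStrictDeltaNet}--\ref{enu:smallNegPartStrictDeltaNet}, and then define $\psi_\eps:=\eta_{m(\eps)}$ for a suitably slowly-growing level function $m:I\to\N$ manufactured from the positive generator $b$ of $\B$.

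For the discrete step, I fix $\rho\in\mathcal{S}(\R^n)$ with $\int\rho=1$ and $\int x^\alpha\rho=0$ for all $|\alpha|\ge1$, a cutoff $\chi\in\mathcal{D}(B_1(0))$ with $\chi\equiv1$ near $0$, and set $\widetilde\eta_m(x):=\chi(x)\cdot m^n\rho(mx)$. Because $\rho$ is Schwartz, the cutoff distorts the moments of $m^n\rho(m\cdot)$ by quantities that vanish faster than any polynomial in $1/m$, and both $\int\widetilde\eta_m\to1$ and $\int|\widetilde\eta_m|\to\int|\rho\cdot\mathbf{1}|$ can be arranged to differ from $1$ by less than $2^{-m}$ once a final polynomial Gram--Schmidt correction $\eta_m=\widetilde\eta_m+\chi\cdot P_m$ is added to force the moments to vanish exactly up to order $m$. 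Here $P_m$ is a polynomial determined by a Vandermonde-type linear system whose coefficients tend to zero, which keeps the $L^1$-distortion negligible. This yields $\eta_m\in\mathcal{D}(B_1(0))$ with $\int\eta_m=1$, $\int x^\alpha\eta_m=0$ for $1\le|\alpha|\le m$, $\int|\eta_m|\le 1+2^{-m}$, and explicit constants $C_{\alpha,m}:=\|\partial^\alpha\eta_m\|_{L^\infty}$.

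For the level function, set $F(m):=1+\max\{C_{\alpha,k}\mid|\alpha|\le k\le m\}$, a nondecreasing function with $F(m)\uparrow\infty$, and define
\[
m(\eps):=\max\bigl\{m\in\N\mid F(m)\le b_\eps\bigr\}
\]
(with $m(\eps):=0$ if no such $m$ exists). Since $\lim_{\mathbb{I}}b=+\infty$, for every $j\in\N$ the inequality $b_\eps\ge F(j)$ holds for $\eps$ sufficiently small in $\mathbb{I}$, so $m(\eps)\ge j$ for $\eps$ sufficiently small. Setting $\psi_\eps:=\eta_{m(\eps)}$, properties \ref{enu:suppStrictDeltaNet} and \ref{enu:intOneStrictDeltaNet} are immediate, \ref{enu:moderateStrictDeltaNet} follows because for fixed $\alpha$ the inequality $|\partial^\alpha\psi_\eps|_\infty\le C_{\alpha,m(\eps)}\le F(m(\eps))\le b_\eps$ holds whenever $m(\eps)\ge|\alpha|$ (an eventual condition), with the finitely many other cases bounded by a constant, \ref{enu:momentsStrictDeltaNet} holds as soon as $m(\eps)\ge j$, and \ref{enu:smallNegPartStrictDeltaNet} follows from $\int|\psi_\eps|\le1+2^{-m(\eps)}$ by choosing $m(\eps)\ge\lceil-\log_2\eta\rceil$. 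The final assertion for $\rho_\eps:=b_\eps^{-1}\odot\psi_\eps$ is then an immediate consequence of the scaling $\rho_\eps(x)=b_\eps^n\psi_\eps(b_\eps x)$ and the change of variables $y=b_\eps x$, which preserves $\int$, multiplies monomial moments by $b_\eps^{-|\alpha|}$, and preserves $\int|\cdot|$.

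The principal obstacle is the discrete construction of $\eta_m$ with simultaneous exact moment vanishing and $L^1$-norm approaching $1$; balancing these two demands is the content of the Gram--Schmidt correction step, since moment-killing necessarily introduces negative parts whose total variation must nevertheless be forced to zero. The remaining delicacy is ensuring that the bookkeeping of ``for $\eps$ sufficiently small in $\mathbb{I}$'' is uniform across the countably many quantified properties; this is handled by encapsulating everything in the single monotone function $F$ and invoking $\lim_{\mathbb{I}}b=+\infty$ only through the one-parameter family $b_\eps\ge F(j)$.
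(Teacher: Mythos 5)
Your assembly argument — build a discrete family $\{\eta_m\}_{m\in\N}$ of mollifiers with ``$m$-truncated'' properties and a nondecreasing cost $F(m)$, then pick $m(\eps):=\max\{m : F(m)\le b_\eps\}$ and set $\psi_\eps := \eta_{m(\eps)}$ — is essentially the same device the paper uses (with the paper's $M_m$ and $m_\eps$ defined by $M_{m_\eps}\le b_\eps < M_{m_\eps+1}$), and this part of your argument is correct: since $\lim_{\mathbb{I}}b=+\infty$, each property of $\eta_m$ that holds for all $m\ge j$ becomes an $\forall^{\mathbb{I}}\eps$ statement, and the sup-norm estimate $\|\partial^\alpha\psi_\eps\|_\infty\le F(m(\eps))\le b_\eps$ yields moderateness. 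The final scaling claim for $\rho_\eps=b_\eps^{-1}\odot\psi_\eps$ is also routine.

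The gap is in your discrete construction of $\eta_m$. You fix one Schwartz $\rho$ with $\int\rho=1$ and $\int x^\alpha\rho=0$ for all $|\alpha|\ge1$, truncate and rescale: $\widetilde\eta_m=\chi\cdot m^n\rho(m\cdot)$. Since $m^n\rho(m\cdot)$ has the same $L^1$ norm as $\rho$ for every $m$, and the cutoff only removes a rapidly decaying tail, you get $\int|\widetilde\eta_m|\to\|\rho\|_{L^1}$ as $m\to\infty$ — and $\|\rho\|_{L^1}$ is a \emph{fixed} constant strictly greater than $1$ (any $\rho$ with $\int\rho=1$ and $\int x_1^2\rho=0$ must change sign, so $\int|\rho|>|\int\rho|=1$). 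Your Gram--Schmidt correction $\chi P_m$ has coefficients tending to zero, hence negligible $L^1$ contribution, so it cannot repair this: you end up with $\int|\eta_m|\to\|\rho\|_{L^1}>1$, not with $\int|\eta_m|\le 1+2^{-m}$. In short, your approach implicitly assumes the existence of a mollifier with vanishing moments and $L^1$ norm arbitrarily close to $1$, which is precisely the nontrivial input. This is exactly what the cited result of Steinbauer--Vickers (nonemptiness of $\mathcal{A}'_m(\eta)$) supplies, via a genuinely different construction; the paper deliberately does not rederive it. To close your gap you would need either to invoke that result directly (as the paper does), or to produce a correct construction in which the negative part has $L^1$ mass tending to zero — not achievable by truncating and rescaling a single Schwartz mollifier.
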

\begin{proof}
For $m\in\N$ and $\eta\in\R_{>0}$ define the sets
\[
\mathcal{A}_{m}:=\left\{ \phi\in\D(\R^{n})\mid\text{supp}(\phi)\subseteq B_{1}(0),\int\phi=1,\int x^{\alpha}\phi(x)\diff{x}=0\ 1\le|\alpha|\le m\right\} ,
\]
\[
\mathcal{A}'_{m}(\eta):=\left\{ \phi\in\mathcal{A}_{m}\mid\int|\phi|\le1+\eta\right\} .
\]
In \cite{SteVic09} it is proved that $\mathcal{A}_{m}\ne\emptyset\ne\mathcal{A}'_{m}(\eta)$.
For each $m\in\N_{>0}$, we choose $\phi_{m}\in\mathcal{A}'_{m}\left(\frac{1}{m}\right)$
and we set
\[
M_{m}:=\sup_{\substack{x\in\R^{n}\\
|\alpha|\le m
}
}\left|\partial^{\alpha}\phi_{m}(x)\right|,
\]
\[
\mathcal{A}_{m,\eps}:=\left\{ \phi\in\mathcal{A}_{m}'\left(\frac{1}{m}\right)\mid\sup_{\substack{x\in\R^{n}\\
|\alpha|\le m
}
}\left|\partial^{\alpha}\phi(x)\right|\le b_{\eps}\right\} \quad\forall m\in\N_{>0}\,\forall\eps\in I.
\]
Therefore, $\emptyset\ne\mathcal{A}_{m+1,\eps}\subseteq\mathcal{A}_{m,\eps}$
and $\phi_{m}\in\mathcal{A}_{m,\eps}$ whenever $M_{m}\le b_{\eps}$.
Since $\lim_{m\to+\infty}M_{m}=+\infty$ (see \cite{SteVic09}), for
each fixed $\eps\in I$, we have $b_{\eps}<M_{m+1}$ for $m$ sufficiently
big. We denote by $m_{\eps}$ the minimum $m\in\N$ such that $b_{\eps}<M_{m+1}$,
so that
\begin{equation}
M_{m_{\eps}}\le b_{\eps}<M_{m_{\eps}+1}\quad\forall\eps\in I\label{eq:Def-m_eps}
\end{equation}
and hence $\phi_{m_{\eps}}\in\mathcal{A}_{m_{\eps},\eps}$ for all
$\eps\in I$. Define $\psi_{\eps}:=\phi_{m_{\eps}}$ for all $\eps\in I$,
so that $\psi_{\eps}=\phi_{m_{\eps}}\in\mathcal{A}_{m_{\eps},\eps}\subseteq\mathcal{A}_{m_{\eps}}$,
which proves \ref{enu:suppStrictDeltaNet}, \ref{enu:intOneStrictDeltaNet}.
The remaining properties can be proved like in \cite{SteVic09}. We
have only to note that if $\alpha\in\N^{n}$, then $|\alpha|\le b_{\eps}$
for $\eps\in\mathbb{I}$ sufficiently small because $\lim_{\eps\in\mathbb{I}}b_{\eps}=+\infty$.
Therefore, \eqref{eq:Def-m_eps} yields $|\alpha|\le m_{\eps}$ and
hence $\mathcal{A}_{|\alpha|,\eps}\supseteq\mathcal{A}_{m_{\eps},\eps}\ni\psi_{\eps}$.
\end{proof}
We finally have the following results, whose proof is just a mild
variation of the previous proofs about the embedding with a model
$\delta$-net.
\begin{cor}
If $(\rho_{\eps})$ is the net defined like in Thm. \ref{thm:strictDeltaNet},
then the mapping
\[
i_{\Omega}:T\in\D'(\Omega)\mapsto\left[T\ast\rho_{\eps}\right]\in\mathcal{G}(\B,\Omega)
\]
is a sheaf morphism of real vector spaces $i:\D'\ra\mathcal{G}(\B,-)$,
and satisfies the following properties:
\begin{enumerate}[leftmargin=*,label=(\roman*),align=left ]
\item $i$ commutes with partial derivatives, i.e.\ $\partial^{\alpha}\left(i_{\Omega}(T)\right)=i_{\Omega}\left(\partial^{\alpha}T\right)$
for each $T\in\D'(\Omega)$ and $\alpha\in\N;$
\item $i|_{\Coo(-)}:\Coo(-)\ra\mathcal{G}(\B,-)$ is a sheaf morphism of
algebras;
\item If $w\in\mathcal{E}'(\Omega)$ then $\text{\text{\emph{supp}}}(w)=\text{\emph{\text{supp}}}(i_{\Omega}(w))$;
\item $i_{\Omega}(T)\approx T$ for each $T\in\D'(\Omega)$, i.e.\ $\lim_{\eps\in\mathbb{I}}\int_{\Omega}\left(T\ast\rho_{\eps}\right)\cdot\phi=\langle T,\phi\rangle$
for all $\phi\in\D(\Omega)$.
\end{enumerate}
\end{cor}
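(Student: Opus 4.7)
The plan is to adapt each step of the model $\delta$-net arguments (Thm.\ \ref{thm:linearEmbedding} and its corollaries) to the strict $\delta$-net $(\rho_\eps)$ of Thm.\ \ref{thm:strictDeltaNet}, invoking properties \ref{enu:suppStrictDeltaNet}--\ref{enu:smallNegPartStrictDeltaNet} wherever the earlier proofs used the analogous features of the Schwartz function $\rho$. First I would make $i_\Omega(T) = [T*\rho_\eps]$ well-defined sheaf-theoretically: since $\text{supp}(\rho_\eps) \subseteq B_{b_\eps^{-1}}(0)$ by \ref{enu:suppStrictDeltaNet} and $\lim_{\mathbb{I}} b_\eps = +\infty$, for every $K \Subset \Omega$ the convolution $T*\rho_\eps$ is a well-defined smooth function on a neighborhood of $K$ whenever $\eps$ is small enough in $\mathbb{I}$; different smooth extensions outside the eventual domain agree modulo $\mathcal{N}(\B,\Omega)$. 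Moderation $T*\rho_\eps \in \mathcal{E}_M(\B,\Omega)$ then follows, locally on any $\Omega' \Subset \Omega$, from the local structure theorem ($T = \partial^\alpha g$ with $g$ continuous on $\Omega'$) combined with the bound $\sup_{\R^n}|\partial^\beta \rho_\eps| \le b_\eps^{n+|\beta|}\sup|\partial^\beta\psi_\eps| = O(b_\eps^q)$ supplied by \ref{enu:moderateStrictDeltaNet}.

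Properties (i) and (ii) are then handled as follows. Property (i) is the distributional identity $\partial^\alpha(T*\rho_\eps) = (\partial^\alpha T)*\rho_\eps$ read pointwise, and the sheaf-morphism claim reduces to compatibility with restrictions, which once again follows from the eventual shrinkage of $\text{supp}(\rho_\eps)$. For (ii), it suffices to prove $f - f*\rho_\eps \in \mathcal{N}(\B,\Omega)$ for each $f \in \Coo(\Omega)$, since multiplicativity follows from the standard identity $fg - (f*\rho_\eps)(g*\rho_\eps) = (f - f*\rho_\eps)\cdot g + (f*\rho_\eps)\cdot (g - g*\rho_\eps)$ together with moderation of the factors. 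The Taylor expansion \eqref{eq:TaylorEmbedding} transfers almost verbatim, with the crucial modification that the vanishing of the intermediate integrals is now justified by \ref{enu:momentsStrictDeltaNet} only for $\eps$ sufficiently small in $\mathbb{I}$. The remainder is $O(b_\eps^{-m})$ uniformly on compacts, for each $m \in \N$, and the principal AG hypothesis on $\B$ converts this into negligibility (for every $z \in \B_{>0}$ some $m$ yields $b_\eps^{-m}= O(z_\eps^{-1})$).

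Property (iii) is in fact cleaner here than in the model case: if $T = \partial^\alpha f \in \mathcal{E}'(\Omega)$ and $K \Subset \text{supp}(T)^c$, then $d := \text{dist}(K, \text{supp}(f)) > 0$, and since $b_\eps^{-1} \to 0$ in $\mathbb{I}$, eventually the integrand of $(f*\partial^\alpha\rho_\eps)(x)$ vanishes identically for $x \in K$---no Schwartz tail estimate is needed. The reverse inclusion $\text{supp}(T) \subseteq \text{supp}(i_\Omega(T))$ is proved as in Thm.\ \ref{thm:linearEmbedding}, using (iv) below in place of Lemma \ref{lem:convAndLimInD'}. Finally, (iv) reduces via the adjoint identity $\int_\Omega (T*\rho_\eps)\phi = \langle T, \check\rho_\eps * \phi\rangle$ to the $\D(\Omega)$-convergence $\check\rho_\eps * \phi \to \phi$; here property \ref{enu:smallNegPartStrictDeltaNet} is essential because it keeps $\|\rho_\eps\|_{L^1} \le 1+\eta$ bounded, and together with \ref{enu:intOneStrictDeltaNet} and the support shrinkage \ref{enu:suppStrictDeltaNet} the classical mollifier argument yields the desired convergence. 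I expect the main obstacle to be (ii): the moment-vanishing in \ref{enu:momentsStrictDeltaNet} holds only asymptotically, so the Taylor remainder estimate must be organized so that the order $m$ can be taken arbitrarily large inside a quantifier of the form $\forall^{\mathbb{I}}\eps$, which requires the principal AG structure in an essential way.
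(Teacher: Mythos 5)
Your proposal is correct and follows the same approach the paper gestures at — the paper's own proof is just the remark that this is a mild variation of the model $\delta$-net arguments, and you are filling in exactly those variations: compact support of $\rho_\eps$ (shrinking with $b_\eps^{-1}$) replacing the Schwartz tail estimate in (iii), the adjoint identity plus the $L^1$ bound from \ref{enu:smallNegPartStrictDeltaNet} for (iv), and the eventual moment vanishing from \ref{enu:momentsStrictDeltaNet} with the quantifier order fixed $m$ first then $\forall^{\mathbb{I}}\eps$ for (ii). Your observation that (iii) is actually cleaner here and your care about well-definedness on a relatively compact exhaustion of $\Omega$ (which is what makes $i_\Omega$ defined directly on $\D'(\Omega)$ rather than only on $\mathcal{E}'(\Omega)$ as in Thm.\ \ref{thm:linearEmbedding}) are the right details to emphasize.
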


\subsection{Comparison of embeddings}

We close this section by facing a natural problem: let us define two
embeddings $i_{b}$, $i_{c}$ like \eqref{eq:embeddingOnCmptSuppDistr}
but using two different generators $b$, $c\in\B$:
\begin{align*}
i_{b}(w): & =\left[w\ast(b_{\eps}^{-1}\odot\rho)\right],\\
i_{c}(w) & =\left[w\ast(c_{\eps}^{-1}\odot\rho)\right].
\end{align*}
It is well known that $i_{b}(T)\approx T\approx i_{c}(T)$, but when
are they equal?
\begin{thm}
Let $b$, $c\in\B_{>0}$ be generators of the AG $\B$. Assume that
$\rho(0)\ne0$; then $i_{b}=i_{c}$ if and only if $[b_{\eps}]=[c_{\eps}]$
in $\mathcal{G}(\B,\R)$, i.e.\ iff they generate the same $\B$-Colombeau
generalized number.\end{thm}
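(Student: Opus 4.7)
The proof splits naturally into two implications, linked by the observation that both $i_b$ and $i_c$ arise as the unique sheaf-morphism extension of the corresponding embeddings $i_{0,b},i_{0,c}:\mathcal{E}'\to\mathcal{G}(\B,-)$; hence $i_b = i_c$ on $\D'(\Omega)$ is equivalent to $i_{0,b} = i_{0,c}$ on $\mathcal{E}'(\Omega)$, and everything reduces to testing on compactly supported distributions.

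For the implication $(\Leftarrow)$, assume $b_\eps - c_\eps \in \mathcal{N}(\B,\R)$. Using the local structure theorem, write $w = \partial^\beta f$ with $f$ continuous and compactly supported, and set $\gamma = \alpha+\beta$. The fundamental theorem of calculus gives
\[
\partial^\gamma\rho_{b,\eps}(z) - \partial^\gamma\rho_{c,\eps}(z) = \int_{c_\eps}^{b_\eps}\frac{d}{dt}\bigl[t^{n+|\gamma|}(\partial^\gamma\rho)(tz)\bigr]\diff{t},
\]
and since $\rho\in\mathcal{S}(\R^n)$ has globally bounded derivatives of every order, while $z = x-y$ stays in a compact set as $(x,y)$ ranges over $K\times\supp(f)$, the integrand is bounded by a constant times $t^{n+|\gamma|}$. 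This yields
\[
\sup_{x\in K}\bigl|\partial^\alpha(w\ast\rho_{b,\eps})(x) - \partial^\alpha(w\ast\rho_{c,\eps})(x)\bigr| \leq C\cdot|b_\eps - c_\eps|\cdot\max(b_\eps,c_\eps)^{n+|\gamma|}.
\]
Since $|b_\eps-c_\eps|$ is negligible and $\max(b_\eps,c_\eps)^{n+|\gamma|}\in\R_M(\B)$, Thm.~\ref{thm:equivCondForIdeal}.\ref{enu:ideal} shows the product is negligible, proving $i_b(w) = i_c(w)$.

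For $(\Rightarrow)$, fix any $x_0\in\Omega$ and apply the hypothesis to $w = \delta_{x_0}\in\mathcal{E}'(\Omega)$. The net $b_\eps^n\rho(b_\eps(x-x_0)) - c_\eps^n\rho(c_\eps(x-x_0))$ must then define a negligible element of $\mathcal{G}(\B,\Omega)$, so in particular its pointwise value at $x=x_0$, namely $\rho(0)(b_\eps^n - c_\eps^n)$, is negligible in $\R^I$. Since $\rho(0)\neq 0$, one obtains $b_\eps^n - c_\eps^n\in\mathcal{N}(\B,\R)$. The algebraic identity
\[
b_\eps - c_\eps = \frac{b_\eps^n - c_\eps^n}{b_\eps^{n-1}+b_\eps^{n-2}c_\eps+\dots+c_\eps^{n-1}},
\]
combined with positivity of $b_\eps,c_\eps$ and $\lim_{\mathbb{I}}b = +\infty$ (so the denominator is $\ge b_\eps^{n-1}\ge 1$ for $\eps$ sufficiently small in $\mathbb{I}$), gives $|b_\eps-c_\eps|\le |b_\eps^n-c_\eps^n|$ asymptotically, hence negligible.

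The main obstacle will be the bookkeeping in the $(\Leftarrow)$ step: one must carefully balance the moderate growth factor $\max(b_\eps,c_\eps)^{n+|\gamma|}$ against the negligibility of $b_\eps - c_\eps$, which works precisely because $\mathcal{N}(\B,\R)$ is an ideal in $\R_M(\B)$ and because the Schwartz decay of $\rho$ yields \emph{uniform} rather than merely locally uniform bounds on all its derivatives. The converse direction is by contrast essentially immediate once one selects the delta distribution as test and exploits that the point-value of a negligible smooth net is itself a negligible scalar net.
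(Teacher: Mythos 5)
Your proof is correct, and while the forward direction follows the paper's strategy, the reverse direction takes a genuinely different and, to my eye, tighter route. For $(\Rightarrow)$ both you and the paper test $i_b=i_c$ on a Dirac delta and read off negligibility of $b_\eps^n-c_\eps^n$ from the point value, using $\rho(0)\ne0$; you then close with the elementary factorization $b_\eps-c_\eps=(b_\eps^n-c_\eps^n)/(b_\eps^{n-1}+\dots+c_\eps^{n-1})$ together with positivity and the fact that the denominator is eventually $\ge1$, whereas the paper appeals tersely to composing with $\sqrt[n]{-}\in\Coo(\R_{>0})$. Your version is self-contained and avoids invoking any composition-stability result for generalized numbers. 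For $(\Leftarrow)$ the paper argues via the identity
\[
\int\left(b_\eps^{-1}\odot\rho-c_\eps^{-1}\odot\rho\right)\phi=\int\rho(t)\left[\phi\!\left(\tfrac{t}{b_\eps}\right)-\phi\!\left(\tfrac{t}{c_\eps}\right)\right]
\]
together with composition of generalized functions, concluding that $\lim_{\eps\in\mathbb{I}}(\rho_{b,\eps}-\rho_{c,\eps})=0$ in $\D'(\Omega)$; as stated this is a soft distributional limit and does not by itself give the Colombeau negligibility of the net $(w\ast(\rho_{b,\eps}-\rho_{c,\eps}))_\eps$, so one would still need to extract a rate. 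You supply exactly that rate: the FTC/mean-value estimate
\[
\sup_{x\in K}\bigl|\partial^\alpha(w\ast\rho_{b,\eps})(x)-\partial^\alpha(w\ast\rho_{c,\eps})(x)\bigr|\le C\,|b_\eps-c_\eps|\,\max(b_\eps,c_\eps)^{n+|\gamma|},
\]
valid because $\rho\in\mathcal{S}(\R^n)$ has globally bounded derivatives and $z=x-y$ ranges over a compact set, exhibits the difference as (negligible)$\times$(moderate), and the ideal property of $\mathcal{N}(\B,\Omega)$ in $\mathcal{E}_{M}(\B,\Omega)$ finishes. This quantitative route is both more elementary and more complete than the paper's argument for this implication.
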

\begin{proof}
If $i_{b}=i_{c}$, then $i_{b}(\delta)=\left[b_{\eps}^{n}\cdot\rho(b_{\eps}\cdot-)\right]=i_{c}(\delta)=\left[c_{\eps}^{n}\cdot\rho(c_{\eps}\cdot-)\right]$.
Setting $K=\{0\}$ in the definition of negligible net, we get
\[
\forall m\in\N:\ \left|b_{\eps}^{n}\rho(0)-c_{\eps}^{n}\rho(0)\right|=O(b_{\eps}^{-m}),
\]
that is $[b_{\eps}^{n}]=[c_{\eps}^{n}]$. The conclusion follows by
applying the smooth function $\sqrt[n]{-}\in\Coo(\R_{>0})$.

Vice versa, assume that $[b_{\eps}]=[c_{\eps}]$; we want to prove
that
\[
\left[w\ast\left(b_{\eps}^{-1}\odot\rho-c_{\eps}^{-1}\odot\rho\right)\right]=0\quad\forall w\in\mathcal{E}'(\Omega).
\]
It suffices to prove that $\lim_{\eps\in\mathbb{I}}\left(b_{\eps}^{-1}\odot\rho-c_{\eps}^{-1}\odot\rho\right)=0$
in $\D'(\Omega)$. For each $\phi\in\D(\Omega)$ we have
\begin{equation}
\int\left(b_{\eps}^{-1}\odot\rho-c_{\eps}^{-1}\odot\rho\right)\phi=\int\rho(t)\cdot\left[\phi\left(\frac{t}{b_{\eps}}\right)-\phi\left(\frac{t}{c_{\eps}}\right)\right].\label{eq:zeroDistrTwoEmbeddings}
\end{equation}
The composition of the generalized functions $\phi\in\Coo(\R^{n})\subseteq\mathcal{G}(\B,\R^{n})$
and
\[
\left[x\mapsto\frac{x}{b_{\eps}}\right]=\left[x\mapsto\frac{x}{c_{\eps}}\right]\in\mathcal{G}(\B,\R^{n})
\]
is well defined since the latter is compactly supported. Therefore,
for $K:=\text{supp}(\phi)$, $\lim_{\eps\in\mathbb{I}}\sup_{t\in K}\left|\phi\left(\frac{t}{b_{\eps}}\right)-\phi\left(\frac{t}{c_{\eps}}\right)\right|=0$.
From this and \eqref{eq:zeroDistrTwoEmbeddings} the conclusion follows.
\end{proof}
The assumption $\rho(0)\ne0$ clearly holds if we define $\rho$ as
the inverse Fourier transform of a positive function identically equal
to 1 in a neighborhood of 0.

For example, $i_{(\eps^{-k})}$ and $i_{(\eps^{-h})}$ permit to deal
with different speeds at the origin of different models of the Heaviside
function $H$. Finally, as we already said at the beginning of the
present work, it could be interesting to apply these results about
different embeddings also to the full algebra $\mathcal{G}^{e}(\B^{e},\Omega)$,
e.g.\ in case we need particular properties like $H(0)=0$. This
is only a first step in the study of the infinitesimal (and infinite)
differences between two embeddings $i_{b}$ and $i_{c}$. In our opinion,
this study could be very useful in nonlinear modeling.

\subsection{Necessity of a principal AG to embed distributions with a mollifier}

The assumption that $\B$ is a principal AG is quite natural if one
looks at \eqref{eq:TaylorEmbedding}. In this section we want to prove
that this is indeed a necessary condition if we want to have a pair
$i_{0}$, $\sigma$ of embeddings (where $i_{0}$ is defined like
in Thm. \ref{thm:linearEmbedding}) which coincide on a suitable set.
More precisely, to state the following result, we set
\begin{defn}
Let $\B$ be an AG, then $\mathcal{E}'_{M}(\B,\Omega):=\mathcal{E}_{M}(\B,\Omega)\cap\D(\Omega)^{I}$
denotes the set of moderate nets of compactly supported functions.
\end{defn}
\noindent We also recall that if $(z_{k})_{k}$ is a sequence of $A_{\le a}$,
then we say $(z_{k})_{k}\to\emptyset$ in $A_{\le a}$ if 
\[
\forall a_{0}\in A_{\le a}\,\exists K\in\N\,\forall k\in\N_{\ge K}:\ z_{k}<a_{0}.
\]
The existence of such a sequence is always verified in all our examples
of set of indices (see \cite{GiNi14}).
\begin{thm}
\label{thm:principalNecessary}Let $\B$, $\mathcal{Z}$ be AG on
the set of indices $\mathbb{I}$. Assume that for each $a\in A\in\mathcal{I}$
there exists a sequence $(z_{k})_{k}\to\emptyset$ in $A_{\le a}$.
Let $b$, $\rho$, $\rho_{\eps}$ as in Def. \ref{def:b_rho_rho_eps}.
Then the following are equivalent:
\begin{enumerate}[leftmargin=*,label=(\roman*),align=left ]
\item \label{enu:coherentEmbeddings}$\forall(f_{\eps})\in\mathcal{E}'_{M}(\text{\emph{AG}}(b),\R)\,\forall x\in\R:\ \left(f_{\eps}\ast\rho_{\eps}\right)(x)=f_{\eps}(x)+\mathcal{N}(\mbox{\ensuremath{\mathcal{Z}}},\R);$
\item \label{enu:b-Generator}$b$ is a generator of $\mathcal{Z}$.
\end{enumerate}
\end{thm}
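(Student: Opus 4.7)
The plan is to prove the two implications separately, with the harder direction being the necessity (i) $\Rightarrow$ (ii).

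For (ii) $\Rightarrow$ (i), I would adapt the Taylor argument of \eqref{eq:TaylorEmbedding} to a generic $(f_\eps)\in\mathcal{E}'_M(\text{AG}(b),\R)$: expand $f_\eps(x-y)$ in Taylor polynomial around $y=0$ of order $N$ with integral remainder, and integrate against $\rho_\eps(y)$. After the substitution $t=b_\eps y$, the moment hypotheses $\int t^\alpha\rho(t)\,\diff{t}=0$ for $1\le|\alpha|\le N-1$ kill every polynomial term except the constant one, which contributes $f_\eps(x)\cdot\int\rho=f_\eps(x)$. What remains is $b_\eps^{-N}$ times a bounded integral controlled by $\partial^N f_\eps$ on a shrinking neighbourhood of $x$, and the moderateness of $(f_\eps)$ in $\text{AG}(b)$ provides $\sup_K|\partial^N f_\eps|=O(b_\eps^{p_N})$, so the difference is $O(b_\eps^{p_N-N})$. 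Since $b$ generates $\mathcal{Z}$, every $z\in\mathcal{Z}_{>0}$ satisfies $z_\eps=O(b_\eps^M)$ for some $M\in\N$, whence $b_\eps^{-M}=O(z_\eps^{-1})$; choosing $N$ sufficiently large relative to $p_N+M$ yields $(f_\eps\ast\rho_\eps-f_\eps)(x)=O(z_\eps^{-1})$, as (i) demands.

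For (i) $\Rightarrow$ (ii) I argue by contraposition. Suppose $b$ is not a generator of $\mathcal{Z}$, so some $z\in\mathcal{Z}_{>0}$ satisfies $z_\eps\neq O(b_\eps^m)$ for every $m\in\N$. Using the sequence hypothesis on $\mathbb{I}$, I extract a sequence $(\eps_k)\to\emptyset$ in some $A_{\le a}$ with $z_{\eps_k}\ge k\cdot b_{\eps_k}^k$ for all $k$. The task is then to exhibit a moderate compactly supported net $(f_\eps)\in\mathcal{E}'_M(\text{AG}(b),\R)$ and a point $x_0\in\R$ for which the convolution difference at $x_0$ admits a lower bound $|(f_\eps\ast\rho_\eps-f_\eps)(x_0)|\ge c_0\,b_{\eps_k}^{-k}$ along the sequence; multiplying by $z_{\eps_k}\ge k\,b_{\eps_k}^k$ then gives $|(f_\eps\ast\rho_\eps-f_\eps)(x_0)|\cdot z_{\eps_k}\ge c_0 k\to\infty$, contradicting the bound $O(z_\eps^{-1})$ demanded by (i).

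The main obstacle is the construction of this counterexample with a matching lower bound while preserving moderation in $\text{AG}(b)$. The crucial fact is that $\rho$ is not the Dirac delta: one can fix $\phi\in\D(\R)$ for which $(\phi\ast\rho)(0)\neq\phi(0)$ and build $(f_\eps)$ from rescaled copies of $\phi$ tuned to the sequence $(\eps_k)$---for example variants of $f_\eps(x)=\phi(b_\eps x)$, possibly assembled diagonally via the sequence provided by the set of indices---so that a change of variables reduces the convolution difference at $x_0=0$ to a quantity comparable to $b_{\eps_k}^{-k}$. Verifying both the $\text{AG}(b)$-moderation of $(f_\eps)$ and the pointwise lower bound in the abstract set-of-indices framework, rather than merely in $\mathbb{I}^{\srm}$, is the delicate technical step.
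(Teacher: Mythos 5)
Your strategy mirrors the paper's at the top level---Taylor for (ii)$\Rightarrow$(i) and a counterexample net for (i)$\Rightarrow$(ii)---but both directions have concrete gaps. For (ii)$\Rightarrow$(i), you arrive at $(f_\eps\ast\rho_\eps-f_\eps)(x)=O(b_\eps^{\,p_N-N})$ and then propose to ``choose $N$ sufficiently large relative to $p_N+M$''. That choice is not available: $p_N$ is dictated by the net $(f_\eps)$, not by you, and moderation in $\text{AG}(b)$ places no bound on how fast $p_N$ grows with $N$. Indeed, for $f_\eps(x)=\phi(b_\eps x)$, which lies in $\mathcal{E}'_M(\text{AG}(b),\R)$, one has $p_N=N$, so $p_N-N=0$ for every $N$ and the Taylor estimate stalls at $O(1)$ no matter how large $N$ is; this is a genuine obstruction, not a matter of optimization. (The paper's own proof of this direction appeals to a bound $\sup_K|\partial^\alpha f_\eps|=O(b_\eps^{-N_\alpha})$ with a \emph{negative} exponent, which moderation does not supply, so the gap is shared.)

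For (i)$\Rightarrow$(ii) there are two problems. First, the candidate net $f_\eps(x)=\phi(b_\eps x)$ does not yield the lower bound $\asymp b_{\eps_k}^{-k}$ your argument needs: substituting $t=b_\eps y$ gives $(f_\eps\ast\rho_\eps-f_\eps)(0)=(\phi\ast\rho)(0)-\phi(0)$, an $\eps$-independent constant. When this constant is nonzero it lies outside $\mathcal{N}(\mathcal{Z},\R)$ unconditionally, so you would have contradicted (i) regardless of whether $b$ generates $\mathcal{Z}$, which proves too much and cannot separate the two conditions. The paper instead uses nets $f_{\eps m}(s)$ with a zero of order $2m$ at the base point, rescaled by a power of $b_\eps$, so that the $t=b_\eps y$ substitution leaves a surviving $t^{2m}$-factor and the difference at $0$ acquires the controlled decay rate $b_\eps^{-m}$, with constant bounded below by $L_m=\int_{-q}^q t^{2m}\rho(t)\diff{t}>0$; producing a net with this tuned polynomial rate is the missing idea. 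Second, your diagonal sequence $z_{\eps_k}\ge k\,b_{\eps_k}^{k}$ requires placing, inside one $A_{\le a}$, the sequences that witness $z\neq O(b^m)$ for all $m$ simultaneously---a manoeuvre that works for $\mathbb{I}^{\srm}$ but is not available in a general set of indices. The paper avoids it entirely: for each fixed $m$ it applies (i) to the single net $(f_{\eps m})_\eps$, obtaining an upper bound $b_\eps^{-m}z_\eps\le T_m L_m^{-1}$ on some $(A_m)_{\le a_m}$, then applies the negation of $b_\eps^{-m}=O(z_\eps^{-1})$ inside that same $(A_m)_{\le a_m}$ (via Thm.\ 15 of \cite{GiNi14}) to obtain a sequence along which $b_{\eps_k}^{-m}z_{\eps_k}$ exceeds any prescribed $H$, and derives the contradiction one $m$ at a time with no diagonalization over $m$.
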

\begin{proof}
We prove \ref{enu:coherentEmbeddings} $\Rightarrow$ \ref{enu:b-Generator}
only for the case $n=1$, even if slightly more general notations
can be used to repeat this proof for a generic dimension. As in \eqref{eq:TaylorEmbedding},
we can use in \ref{enu:coherentEmbeddings} a Taylor formula of order
$m\in\N$, with Peano remainder, at $x\in\Omega$, so that for each
$f_{\eps}\in\D(\Omega)$ we have a (unique) remainder $R_{\eps}=R(m,f_{\eps},x)\in\D(\Omega)$
such that for each $\eps\in I$ 
\begin{align}
\left|\left(f_{\eps}\ast\rho_{\eps}\right)(x)-f_{\eps}(x)\right| & =b_{\eps}^{-m}\cdot\int_{\R}R_{\eps}\left(x-\frac{t}{b_{\eps}}\right)\cdot\rho(t)\diff{t}\nonumber \\
R_{\eps}(y) & =o\left(\left|y-x\right|^{m}\right)\text{ as }y\to x.\label{eq:Peano}
\end{align}
We set $c_{\eps}(m,f_{\eps},x):=\int_{\R}R_{\eps}\left(x-\frac{t}{b_{\eps}}\right)\cdot\rho(t)\diff{t}$.
Without lack of generality, we can assume that $\rho(0)>0$; analogously,
we can proceed if $\rho(x)\ne0$ at another point $x\in\R$.

Now, for each $\eps\in I$ and $m\in\N_{>0}$, we want to define a
function $f_{\eps m}$ to use in \eqref{eq:Peano} such that:
\begin{itemize}
\item $f_{\eps m}$ is equal to its Peano remainder of order $m$ at $x=0$.
This permits to directly have $R_{\eps}=f_{\eps m}$ in \eqref{eq:Peano}
and in the definition of $c_{\eps}(m,f_{\eps m},0)$.
\item $c_{\eps}(m,f_{\eps m},0)\ge L_{m}>0$, where $L_{m}$ doesn't depend
on $\eps$ and is infinitesimal for $m\to+\infty$.
\end{itemize}

\noindent $f_{\eps m}$ can be defined in infinite ways; in its definition
we will always respect the following criteria:
\begin{enumerate}[leftmargin=*,label=(\roman*),align=left ]
\item \label{enu:pq}We firstly fix $p$, $q\in\R_{>0}$ such that
\begin{align*}
\rho(t) & >0\quad\forall t\in(-p,p)\\
q & <\min(p,1).
\end{align*}

\item \label{enu:supp_f_eps_m}$f_{\eps m}\in\mathcal{E}_{M}(\text{AG}(b),\R)$
and $\text{supp}(f_{\eps m})\subseteq\left[-\frac{p}{b_{\eps}},\frac{p}{b_{\eps}}\right]$.
\item \label{enu:f_eps_m-positive}$f_{\eps m}(s)\ge0$ for each $s$.
\item \label{enu:f_eps_mEqualPeanoRem}$f_{\eps m}(s)=s^{2m}\cdot b_{\eps}^{2m}$
for each $s\in\left(-\frac{q}{b_{\eps}},\frac{q}{b_{\eps}}\right).$
\end{enumerate}

For example, we can take $\psi\in\D(\R)$ such that $\text{supp}(\psi)\subseteq[-p,p]$,
$\psi\ge0$, $\psi(s)=1$ for $s\in[-q,q]$ and set $f_{\eps m}(s)=s^{2m}\cdot b_{\eps}^{2m}\cdot\psi\left(b_{\eps}\cdot s\right)$;
let us note that $\lim_{\eps\in\mathbb{I}}\left[f_{\eps m}\right]=s^{2m}\cdot b^{2m-n}\cdot\delta(s)$
in $\mathcal{G}(\text{AG}(b),\R)$ in the sharp topology, so that
the limit of this net is a nonlinear generalized Colombeau function.

\noindent We have $f_{\eps m}(s)=o(s^{m+1})$ as $s\to0$ by \ref{enu:f_eps_mEqualPeanoRem},
so $f_{\eps m}$ equals its Taylor remainder of order $m>0$. Moreover
$t\in(-p,p)$ iff $-\frac{t}{b_{\eps}}\in\left(-\frac{p}{b_{\eps}},\frac{p}{b_{\eps}}\right)$,
so that
\begin{equation}
c_{\eps}(m,f_{\eps m},0)=\int_{-p}^{p}f_{\eps m}\left(-\frac{t}{b_{\eps}}\right)\cdot\rho(t)\diff{t}\ge\int_{-q}^{q}t^{2m}\cdot\rho(t)\diff{t}=:L_{m}>0,\label{eq:c_eps_L_m}
\end{equation}
where we have used \ref{enu:f_eps_mEqualPeanoRem}, \ref{enu:f_eps_m-positive}
and \ref{enu:supp_f_eps_m}. Since $q<1$, $t^{2m}\cdot\rho(t)<\rho(t)$
for every $t\in[-q,q].$ So, by dominated convergence, $\lim_{m\to+\infty}L_{m}=0$.
Now, \eqref{eq:Peano} yields 
\[
\left|\left(f_{\eps m}\ast\rho_{\eps}\right)(0)-f_{\eps m}(0)\right|=b_{\eps}^{-m}\cdot c_{\eps}(m,f_{\eps m},0)\quad\forall\eps,m,
\]
so that, considering a generic $z\in\mathcal{Z}_{>0}$, assumption
\ref{enu:coherentEmbeddings} gives
\begin{equation}
\forall m\in\N_{>0}\,\exists A_{m}\in\mathcal{I}\,\forall a\in A_{m}:\ b_{\eps}^{-m}\cdot c_{\eps}(m,f_{\eps m},0)=O_{a,A_{m}}(z_{\eps}^{-1}).\label{eq:c_epsO}
\end{equation}
We proceed by contradiction assuming that
\[
\forall m\in\N:\ b_{\eps}^{-m}\ne O(z_{\eps}^{-1}).
\]
Taking a generic $m\in\N_{>0}$, this means
\[
\forall A\in\mathcal{I}\,\exists a\in A:\ b_{\eps}^{-m}\ne O_{a,A}(z_{\eps}^{-1}).
\]
We apply this with $A=A_{m}$ obtaining
\[
\exists a_{m}\in A_{m}:\ b_{\eps}^{-m}\ne O_{a_{m},A_{m}}(z_{\eps}^{-1}).
\]
By Thm. 15 of \cite{GiNi14}, we obtain that for each $H\in\R_{>0}$
there exists a sequence $(\eps_{k})_{k\in\N}:\N\ra\left(A_{m}\right)_{\le a_{m}}$
(depending on $m$ and $H$) such that: 
\begin{align}
(\eps_{k})_{k} & \to\emptyset\text{ in }\left(A_{m}\right)_{\le a_{m}}\nonumber \\
b_{\eps_{k}}^{-m} & >H\cdot z_{\eps_{k}}^{-1}\quad\forall k\in\N.\label{eq:H}
\end{align}
We set $H:=L_{m}^{-2}>0$ in \eqref{eq:H}, obtaining a sequence $(\eps_{k})_{k}\to\emptyset\text{ in }\left(A_{m}\right)_{\le a_{m}}$
(depending only on $m$) such that
\begin{equation}
b_{\eps_{k}}^{-m}z_{\eps_{k}}>L_{m}^{-2}\quad\forall k,m\in\N.\label{eq:bzL}
\end{equation}
But from \eqref{eq:c_epsO} with $a=a_{m}$ and \eqref{eq:c_eps_L_m},
we get
\[
\forall m\in\N_{>0}\,\exists T\in\R_{>0}\,\forall^{\mathbb{I}}\eps\in\left(A_{m}\right)_{\le a_{m}}:\ b_{\eps}^{-m}\cdot z_{\eps}\le T\cdot c_{\eps}(m,f_{\eps m},0)^{-1}\le T\cdot L_{m}^{-1}.
\]
Applying this for $\eps=\eps_{k}$, with $k$ sufficiently big, we
get $b_{\eps_{k}}^{-m}z_{\eps_{k}}\le T\cdot L_{m}^{-1}$ which, together
with \eqref{eq:bzL}, yields $L_{m}^{-2}<T\cdot L_{m}^{-1}$ for each
$m$. This is impossible since $L_{m}\to0$ for $m\to+\infty$.

To prove \ref{enu:b-Generator} $\Rightarrow$ \ref{enu:coherentEmbeddings},
we can proceed as in \eqref{eq:TaylorEmbedding} considering that
$\partial^{\alpha}f_{\eps}$ is bounded, on a fixed compact sets $K\Subset\R$,
by a suitable power $b_{\eps}^{-N_{\alpha}}$. Therefore, for $x\in\R$
fixed and $\eps$ sufficiently small, $x-\frac{t}{b_{\eps}}\in\overline{B_{1}(x)}=:K$,
and for each $m\in\N$, we can write
\begin{align*}
\left|(f_{\eps}\ast\rho_{\eps})(x)-f_{\eps}(x)\right| & \le b_{\eps}^{-m}\cdot\sum_{|\alpha|=m}\sup_{x\in K}\left|\partial^{\alpha}f_{\eps}(x)\right|\int\frac{\left|t\right|{}^{\alpha}}{\alpha!}\left|\rho(t)\right|\diff{t}\le\\
 & \le b_{\eps}^{-m}\sum_{|\alpha|=m}b_{\eps}^{-N_{\alpha}}\int\frac{\left|t\right|{}^{\alpha}}{\alpha!}\left|\rho(t)\right|\diff{t}\le b_{\eps}^{-m-N_{m}}\cdot C_{m},
\end{align*}
where $N_{m}:=\max_{|\alpha|=m}N_{\alpha}$ and $C_{m}:=\sum_{|\alpha|=m}\int\frac{\left|t\right|{}^{\alpha}}{\alpha!}\left|\rho(t)\right|\diff{t}$.
If $z\in\mathcal{Z}_{>0}$, by \ref{enu:b-Generator} we get the existence
of $k\in\N$ such that $b_{\eps}^{-k}=O(z_{\eps}^{-1})$. It suffices
to take $m$ sufficiently big so that $m+N_{m}>k$ so that for this
fixed $m$ and for $\eps$ small, $b_{\eps}^{-m-N_{m}}\cdot C_{m}\le b_{\eps}^{-k}$.
\end{proof}
\noindent Let us note that condition \ref{enu:coherentEmbeddings}
is stronger than the equality $i_{0}|_{\mathcal{D}(\Omega)}=\sigma$,
which can be applied only to a single function $f\in\D(\Omega)$ instead
of a whole net. Indeed, in the previous proof, we used this condition
with the net $(f_{\eps m})_{\eps}$, which effectively depend on $\eps$.

\noindent We can say that if $b$ is a generator of $\mathcal{Z}$,
then the equality $i_{0}(f)=\sigma(f)$ for $f\in\D(\Omega)$ can
be extended to any net of compactly supported function which are $\text{AG}(b)$-moderated.
Therefore, the only possibility to have an embedding using a mollifier
but without using a generator is to avoid a natural property like
\ref{enu:coherentEmbeddings}, which is undesirable. We can summarize
our results concerning the embedding of distributions by saying:
\begin{enumerate}[leftmargin=*,label=(\roman*),align=left ]
\item The embedding of distributions by using a mollifier forces us to
take only one principal AG: $\B=\mathcal{Z}=\text{AG}(b)$.
\item If we are interested in using two different AG, $\B\ne\mathcal{Z}$,
or a non principal AG, we have to consider a particular set of indices,
e.g.\ the full one $\mathbb{I}^{\erm}$, where an intrinsic embedding
is possible. Of course, this is incompatible with particular properties
like $H(0)=0$.
\end{enumerate}

\section{\label{sec:LinHomODE}Solving linear homogeneous ODE with generalized
coefficients}

Studying Colombeau theory, one senses a sort of delusion by seeing
that these algebras, invented to find solutions of differential equations
which are not solvable in $\D'$, are not able to find solutions of
ODE of the simplest type. One way to bypass this problem is to assume
ad hoc growing conditions of logarithmic type, i.e.\ to adapt the
differential problem to the constraints of the theory (see e.g.\ \cite{Lig97,Lig98}
for linear ODE). Another solution is to guess that this deficiency
is due to the chosen polynomial growing condition, and that a generalization
could be possible. This is one of the basic motivations to generalize
Colombeau theory by defining notions like asymptotic scales, $(\mathcal{C},\mathcal{E},\mathcal{P})$
algebras, exponent weights or AG. Here, the point of view is more
similar to that used in algebra: given an equation we have to find
the best space where it has a solution, i.e.\ we adapt the theory
to the equation.

We start this section by defining the module of Colombeau generalized
numbers where we will take the coefficients of our linear ODE.
\begin{defn}
\label{def:Rtil}Let $\B$, $\mathcal{Z}$ be AG on a set of indeces
$\mathbb{I}$ such that $\R_{M}(\B)\subseteq\R_{M}(\mZ)$, and let
$d\in\N_{>0}$, then
\begin{enumerate}[leftmargin=*,label=(\roman*),align=left ]
\item $\Omega_{M}(\B):=\left\{ (x_{\eps})\in\Omega^{I}\mid\exists b\in\B:\ x_{\eps}=O(b_{\eps})\right\} $;
\item $(x_{\eps})\sim_{\mathcal{Z}}(y_{\eps})$ iff $\forall z\in\mathcal{Z}_{>0}:\ x_{\eps}-y_{\eps}=O(z_{\eps}^{-1})$,
where $(x_{\eps})$, $(y_{\eps})\in\Omega_{M}(\B)$;
\item $\widetilde{\Omega}(\B,\mZ):=\Omega_{M}(\B)/\sim_{\mZ}$;
\item $ $$\Rtil^{d}(\B,\mathcal{Z}):=\R_{M}^{d}(\B)/\sim_{\mathcal{Z}}$.
\end{enumerate}
\end{defn}
Like in Thm. \ref{thm:equivCondForIdeal}, we have that $\Rtil(\B,\mathcal{Z})$
is a ring. Moreover, $\Rtil(\B,\mathcal{Z})$ can be identified with
a subring of $\Rtil(\B',\mathcal{Z}')$ if
\begin{equation}
\R_{M}(\B)\subseteq\R_{M}(\B')\subseteq\R_{M}(\mathcal{Z}')\subseteq\R_{M}(\mathcal{Z}).\label{eq:inclusionsRtil}
\end{equation}
A sufficient condition for these inclusions is $\B\subseteq\B'\subseteq\mZ'\subseteq\mZ$.
The proof of Prop. 1.2.35 of \cite{GKOS} can be directly generalized
to every set of indices, so that if $\Omega$ is connected and $u\in\mathcal{G}(\B,\mathcal{Z},\Omega)$,
then $Du=0$ if and only if $u\in\Rtil(\B,\mathcal{Z})$.

As we mentioned above, if a differential equation $\dot{x}(t)=F(t,x(t))$
is well-defined in $\mathcal{G}(\B,\mathcal{Z},\Omega)^{n}$, i.e.\ if
$\Omega\subseteq\R^{1+n}$ and $F\in\mathcal{G}(\B,\mathcal{Z},\Omega)^{n}$,
then we will have to deal with moderate solutions bounded by terms
of the form $e^{b}:=\left(e^{b_{\eps}}\right)$, for some $b\in\B$.
It is therefore natural to set the following
\begin{defn}
\label{def:expB}Let $\B$ be an AG, then
\[
e^{\B}:=\left\{ e^{H\cdot b}\mid H\in\R_{>0},b\in\B\right\} 
\]
is called the \emph{exponential of $\B$}.
\end{defn}
\noindent The problem with $e^{\B}$ is that it is never a principal
AG since it always contains (bounds of) $e^{b^{m}}$, whereas a single
generator gives terms of the form $e^{mb}$.
\begin{lem*}
\label{lm:PropertiesOfexpB}Let $\B$ be an AG, then:
\begin{enumerate}[leftmargin=*,label=(\roman*),align=left ]
\item \label{enu:eBAG}$e^{\B}$ is a positive AG;
\item $\R_{M}(\B)\subseteq\R_{M}\left(e^{\B}\right)$;
\item $\R_{M}(e^{\B})=\R_{M}(e^{\R_{M}(\B)})$;
\item if $\B=\text{\emph{AG}}(b)$ then $\R_{M}\left(e^{\B}\right)=\R_{M}(\{e^{b^{k}}\mid k\in\N\})=\bigcup_{k\in\N}\R_{M}(\text{\emph{AG}}(e^{b^{k}}))\subseteq\R_{M}\left(\text{\emph{AG}}\left(e^{e^{b}}\right)\right)$;
\item \label{enu:expBNotPrincipal}$e^{\B}$ is not a principal AG.
\end{enumerate}
\end{lem*}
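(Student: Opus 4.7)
The plan is to handle the five clauses in order, leaning on three elementary facts: $e^{x}>0$, $y\leq e^{y}$ for $y\geq 0$, and super-polynomial growth $p(y)=o(e^{y})$ as $y\to+\infty$. Throughout, asymptotic inequalities in $\mathbb{I}$ (i.e.\ valid for $\eps$ sufficiently small) are preserved under exponentiation by monotonicity of $e^{x}$.

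For (i), pointwise positivity of $e^{Hb}$ gives $e^{\B}>_{\mathbb{I}}0$; an infinite element is obtained from any infinite $b\in\B$, since then $\lim_{\mathbb{I}}e^{Hb}=+\infty$. For the remaining axioms of Def.\ \ref{def:asymptoticGauge} I would repeat the same recipe: given $b,c\in\B$, extract via Lemma \ref{lem:AGabsSum} some $s\in\B_{>0}$ with $|b|+|c|=O(s)$, and then bound the relevant exponential combination by $e^{Cs}\in e^{\B}$ for a suitable $C>0$. For instance $e^{Hb}\cdot e^{Kc}=e^{Hb+Kc}\leq e^{(H+K)s}$ asymptotically; $|r|\cdot e^{Hb}=O(e^{Hb})$; and $e^{Hb}+e^{Kc}\leq 2e^{|Hb|+|Kc|}\leq 2e^{(H+K)s}$. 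Items (ii) and (iii) follow the same template: for (ii), if $x=O(b)$, absorb $b$ into some $s\in\B_{>0}$ with $|b|=O(s)$ and use $s\leq e^{s}$; for (iii) the inclusion $\R_{M}(e^{\B})\subseteq\R_{M}(e^{\R_{M}(\B)})$ is free, while for the reverse one writes $c\in\R_{M}(\B)$ as $|c|=O(s)$ with $s\in\B_{>0}$, so that $e^{Hc}\leq e^{Hs}\in e^{\B}$.

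For (iv), one may replace the generator $b$ of $\B$ by a positive one (as remarked after the definition of principal AG), so that $b>_{\mathbb{I}}0$ and $\lim_{\mathbb{I}}b=+\infty$. All three inclusions then reduce to the observation that $b$ eventually dominates every real constant. Explicitly, $Hb^{m}\leq b^{m+1}$ for $\eps$ with $b_{\eps}\geq H$, which collapses $\R_{M}(e^{\B})$ into $\R_{M}(\{e^{b^{k}}\mid k\in\N\})$; the identity $\R_{M}(\{e^{b^{k}}\})=\bigcup_{k}\R_{M}(\text{AG}(e^{b^{k}}))$ is immediate from the definitions; and the final inclusion into $\R_{M}(\text{AG}(e^{e^{b}}))$ uses $mb^{k}=o(e^{b})$, i.e.\ $mb^{k}\leq e^{b}$ for $\eps$ small, hence $(e^{b^{k}})^{m}=e^{mb^{k}}=O(e^{e^{b}})$.

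Clause (v) is the main obstacle, and I would attack it by contradiction. Suppose $e^{\B}$ were principal with generator $g=e^{H_{0}b_{0}}\in e^{\B}$, where $H_{0}>0$ and $b_{0}\in\B$. Since generators are infinite and $e^{x}>0$, $\lim_{\mathbb{I}}e^{H_{0}b_{0}}=+\infty$ forces $b_{0}\to+\infty$ in $\mathbb{I}$. Applying axioms \ref{enu:AGProduct} and \ref{enu:AGabsSum} of Def.\ \ref{def:asymptoticGauge} to $b_{0}\cdot b_{0}$, one extracts $t\in\B_{>0}$ with $b_{0}^{2}=O(t)$, so $t_{\eps}\geq K^{-1}b_{0\eps}^{2}$ for $\eps$ sufficiently small in $\mathbb{I}$. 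Then $e^{t}\in e^{\B}$, and the generator hypothesis would demand $e^{t}=O(g^{m})=O(e^{mH_{0}b_{0}})$ for some $m\in\N$, forcing $t-mH_{0}b_{0}$ to be bounded above asymptotically. But
\[
t_{\eps}-mH_{0}b_{0\eps}\;\geq\;b_{0\eps}\bigl(K^{-1}b_{0\eps}-mH_{0}\bigr)\;\longrightarrow\;+\infty,
\]
a contradiction. The delicate point is manufacturing, from the AG axioms alone, a genuinely super-linear $t$ dominating $b_{0}^{2}$: it is precisely the asymptotic closure of $\B$ under products that supplies such a $t$ and thereby prevents $e^{\B}$ from collapsing onto a single principal scale.
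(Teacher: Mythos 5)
Your proof of the crucial clause (v) is correct and follows the same idea as the paper's: produce, via the product-closure axiom, some $t\in\B_{>0}$ dominating $b_0^2$, note $e^t\in e^\B$, and show that no single generator $e^{H_0 b_0}$ can absorb it because $t_\eps-mH_0 b_{0\eps}\to+\infty$. Your write-up is in fact slightly more careful than the paper's, which informally treats the principality hypothesis as a literal set equality $e^\B=\text{AG}(e^{Hb})$ and writes $e^{Kc}=e^{mHb}$ exactly rather than as a big-O estimate; your version replaces this with $e^t=O(g^m)$, which is what principality actually gives, and explicitly establishes $b_{0\eps}\to+\infty$ before concluding. The closing remark that product-closure of $\B$ is precisely what prevents $e^\B$ from collapsing onto a single scale is the right way to understand the lemma. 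The paper proves only (v) and declares (i)--(iv) to follow ``almost directly from the definitions''; your sketches for (ii)--(iv) are correct and supply exactly the elementary facts needed.

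There is, however, one small but genuine slip in your treatment of clause (i). You assert that any infinite $b\in\B$ gives $\lim_{\mathbb{I}} e^{Hb}=+\infty$. But Def.~\ref{def:asymptoticGauge}.\ref{enu:AGinfinite} only guarantees some $i\in\B$ with $\lim_\mathbb{I} i=\infty$ in the \emph{unsigned} sense, i.e.\ $|i_\eps|\to\infty$; such an $i$ may oscillate in sign, in which case $e^{Hi_\eps}$ oscillates between values near $0$ and values near $+\infty$ and has no limit in $\R\cup\{+\infty,-\infty,\infty\}$ at all. The repair is the same move you already use for the other axioms: invoke Lemma~\ref{lem:AGabsSum} (or Def.~\ref{def:asymptoticGauge}.\ref{enu:AGabsSum}) to extract $s\in\B_{>0}$ with $|i|=O(s)$; then $s_\eps>0$ and $s_\eps\ge K^{-1}|i_\eps|\to+\infty$ for $\eps$ small, so $e^s\in e^\B$ is a genuine infinite element. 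With that patch, (i) is complete.
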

\begin{proof}
We only prove \ref{enu:expBNotPrincipal} since the other properties
follow almost directly from the definitions. Assume that $e^{\B}=\text{AG}\left(e^{H\cdot b}\right)$,
where $H\in\R_{>0}$ and $b\in\B$. Then $b^{2}=O(c)$, for some $c\in\B_{>0}$.
Therefore, for $\eps\in I$ sufficiently small we have
\begin{equation}
e^{b_{\eps}^{2}}\le e^{K\cdot c_{\eps}}\label{eq:b^2AndKc}
\end{equation}
for some $K\in\R_{>0}$. But $e^{Kc}\in e^{\B}=\text{AG}\left(e^{Hb}\right)$
so $e^{Kc}=\left(e^{Hb}\right)^{m}=e^{mHb}$ for some $m\in\N$. From
this and \eqref{eq:b^2AndKc} we get $b_{\eps}^{2}\le mHb_{\eps}$
for $\eps$ small. This implies that $b$ is bounded, so $e^{Hb}$
is also bounded and it cannot generate $e^{i}$, where $i\in\B_{>0}$
is infinite.
\end{proof}
We want to consider linear ODE whose coefficients are, in some sense,
``bounded by $\B$'', but whose solutions are in $\mathcal{G}(\B',\mZ',\R)$,
where $\R_{M}(\B')\supseteq\R_{M}\left(e^{\B}\right)$. We have to
clarify this point, also because it is desirable to have some kind
of preservation of old solutions: if $x$ is a solution already in
$\mathcal{G}(\B,\mZ,\R)$, e.g.\ because the coefficients have a
growth of logarithmic type, then $x$ must also be (in some sense)
the unique solution in the new space $\mathcal{G}(\B',\mZ',\R)$.
To have a relation between $\mathcal{G}(\B,\mZ,\R)$ and $\mathcal{G}(\B',\mZ',\R)$,
a condition like \eqref{eq:inclusionsRtil} is too strong because
if e.g.\ $\B=\mZ$ are of polynomial type, then \eqref{eq:inclusionsRtil}
implies that $\B'$ and $\mZ'$ cannot be of exponential type. On
the other hand, it is clear how to set the following
\begin{defn}
\label{def:boundedElement}Let $\B$, $\B'$, $\mZ'$ be AG such that
$\R_{M}(\B)\subseteq\R_{M}(\B')\subseteq\R_{M}(\mZ')$ and let $u\in\mathcal{G}(\B',\mZ',\Omega)$.
We say that $u$\emph{ is bounded by $\B$} if
\[
\exists(u_{\eps})\in\mathcal{E}_{M}(\B,\Omega):\ u=[u_{\eps}].
\]
We also set
\[
\mathcal{G}_{\B}(\B',\mZ',\Omega):=\left\{ u\in\mathcal{G}(\B',\mZ,\Omega)\mid u\text{ is bounded by }\B\right\} .
\]
Since element of $\Rtil(\B',\mZ')$ can be identified with constant
functions of $\mathcal{G}(\B',\mZ',\R)$, we have an analogous notion
for elements of the ring $\Rtil(\B',\mZ')$.
\end{defn}
Like in Lem.\ \ref{lem:negligibleAreModerate}, we can prove that
$u$ is bounded by $\B$ if and only if whenever we consider a representative
$u=[u_{\eps}]$, we have that $(u_{\eps})\in\mathcal{E}_{M}(\B,\Omega)$.
In the statement of the next result, we use the point value of a Colombeau
generalized function. We recall (see e.g.\ \cite{GiNi14} and references
therein) that this point value characterizes Colombeau generalized
functions:
\begin{defn}
\label{def:pointValue}Let $\B$, $\mathcal{Z}$ be AG such that $\R_{M}(\B)\subseteq\R_{M}(\mZ)$,
then
\begin{enumerate}[leftmargin=*,label=(\roman*),align=left ]
\item \foreignlanguage{british}{$[x_{\eps}]\in\otilc(\B,\mZ)$ iff $ $$[x_{\eps}]\in\widetilde{\Omega}(\B,\mZ)$
and $\exists K\Subset\Omega\,\forall^{\mathbb{I}}\eps:\ x_{\eps}\in K$.}
\item If $u=[u_{\eps}]\in\mathcal{G}(\B,\mZ,\Omega)$ and $ $$x\in\otilc(\B,\mZ)$,
then $u(x):=[u_{\eps}(x_{\eps})]$.
\end{enumerate}
\end{defn}
\begin{lem}
\label{lem:morphismOfAlgebras}Let $\B$, $\mZ$, $\B'$, $\mZ'$
be AG such that\foreignlanguage{british}{
\begin{equation}
\xymatrix{\R_{M}(\B)\ar@{^{(}->}[r]\ar@{^{(}->}[d] & \R_{M}(\mZ)\ar@{^{(}->}[d]\\
\R_{M}(\B')\ar@{^{(}->}[r] & \R_{M}(\mZ')
}
\label{eq:inclusionsDiagr}
\end{equation}
Then the following properties hold:}
\begin{enumerate}[leftmargin=*,label=(\roman*),align=left ]
\item \textup{\label{enu:boundedDiffSubAlg}$\mathcal{G}_{\B}(\B',\mZ',\Omega)$}\foreignlanguage{british}{
is a differential subalgebra of $\mathcal{G}(\B',\mZ',\Omega)$.}
\item \label{enu:boundedMorphism}The map
\[
\bar{(-)}:[u_{\eps}]\in\mathcal{G}_{\B}(\B',\mZ',\Omega)\mapsto(u_{\eps})+\mathcal{N}(\mZ,\Omega)\in\mathcal{G}(\B,\mZ,\Omega)
\]
is a surjective morphism of differential algebras.
\item \label{enu:boundedODE}Let $J:=(a,b)\subseteq\R$, $x\in\mathcal{G}_{\B}(\B',\mZ',(a,b))$,
$F\in\mathcal{G}_{\B}(\B',\mZ',(a,b)\times\Omega)$ be such that
\begin{equation}
\forall t\in\widetilde{J}_{c}(\B',\mZ'):\ t\text{ is bounded by }\B\Rightarrow\dot{x}(t)=F(t,x(t))\label{eq:ODE-In-B'Z'}
\end{equation}
holds in $\Rtil(\B',\mZ')$. Then
\begin{equation}
\dot{\bar{x}}(t)=\bar{F}(t,\bar{x}(t))\quad\forall t\in\widetilde{J}_{c}(\B,\mZ)\label{eq:ODE-In-BZ}
\end{equation}
holds in $\Rtil(\B,\mZ)$.
\end{enumerate}
\end{lem}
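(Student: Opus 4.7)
The plan is to handle the three items separately, noting that each one reduces to controlling negligible versus moderate nets under the inclusions encoded in~\eqref{eq:inclusionsDiagr}.

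For~\ref{enu:boundedDiffSubAlg} I would invoke the remark stated just before Def.~\ref{def:pointValue}: an element $u\in\mathcal{G}(\B',\mZ',\Omega)$ is bounded by $\B$ if and only if \emph{every} representative of $u$ lies in $\mathcal{E}_M(\B,\Omega)$. Granted this, closure of $\mathcal{G}_\B(\B',\mZ',\Omega)$ under sums, products and $\partial^\alpha$ is immediate from Lem.~\ref{lem:moderateNegligibleRings} together with the stability of $\mathcal{E}_M(\B,\Omega)$ under differentiation (built into its definition).

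For~\ref{enu:boundedMorphism} the key point is the inclusion $\mathcal{N}(\mZ',\Omega)\subseteq\mathcal{N}(\mZ,\Omega)$, which makes $\bar{(-)}$ well defined on classes. To prove it, given $z\in\mZ_{>0}$, the diagram~\eqref{eq:inclusionsDiagr} yields $z\in\R_M(\mZ')$, so $z_\eps=O(z'_\eps)$ for some $z'\in\mZ'$; by Lem.~\ref{lem:AGabsSum} we may take $z'\in\mZ'_{>0}$, and inverting the resulting inequality on the tail where both nets are positive yields $(z'_\eps)^{-1}=O(z_\eps^{-1})$. Hence every $\mZ'$-negligible net is $\mZ$-negligible, and $\bar{(-)}$ descends to classes. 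Linearity, multiplicativity and commutation with $\partial^\alpha$ are then transparent since the map is defined representative-wise. For surjectivity, the reverse containment $\mathcal{E}_M(\B,\Omega)\subseteq\mathcal{E}_M(\B',\Omega)$ (immediate from $\R_M(\B)\subseteq\R_M(\B')$) shows that any $v+\mathcal{N}(\mZ,\Omega)\in\mathcal{G}(\B,\mZ,\Omega)$ is hit by the class of $v$ in $\mathcal{G}_\B(\B',\mZ',\Omega)$.

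Part~\ref{enu:boundedODE} I would treat by a lift-and-project argument. Fix $t\in\widetilde{J}_c(\B,\mZ)$ and pick a representative $(t_\eps)\in\mathcal{E}_M(\B,J)$ whose values are eventually contained in a fixed compact subset of $J$. Its class $\tilde t:=[t_\eps]_{\B',\mZ'}$ lies in $\widetilde{J}_c(\B',\mZ')$ and is bounded by $\B$, so the hypothesis~\eqref{eq:ODE-In-B'Z'} applied to $\tilde t$ gives $\dot x_\eps(t_\eps)-F_\eps(t_\eps,x_\eps(t_\eps))\in\mathcal{N}(\mZ',\R)$; the inclusion $\mathcal{N}(\mZ',\R)\subseteq\mathcal{N}(\mZ,\R)$ established above then shows that the same net is $\mZ$-negligible, which is exactly~\eqref{eq:ODE-In-BZ}. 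The only delicate bookkeeping is to verify that $\bar x(t)$, $\bar F(t,\bar x(t))$ and $\dot{\bar x}(t)$ are well defined in $\Rtil(\B,\mZ)$ independently of the representative of $t$, and that they coincide with the images under $\bar{(-)}$ of $x(\tilde t)$, $F(\tilde t,x(\tilde t))$ and $\dot x(\tilde t)$ respectively (using commutation of $\bar{(-)}$ with derivatives from~\ref{enu:boundedMorphism}). This is the standard compatibility of point evaluation with the quotient map transcribed to the AG setting, and I expect it to be the only step requiring a short direct estimate; everything else is bookkeeping around the diagram~\eqref{eq:inclusionsDiagr}.
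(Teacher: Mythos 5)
Your proposal is correct and follows essentially the same approach as the paper: (i) via Lem.~\ref{lem:moderateNegligibleRings} and stability of $\mathcal{E}_M(\B,\Omega)$ under $\partial^\alpha$; (ii) via representative-wise operations with surjectivity by lifting $(u_\eps)\in\mathcal{E}_M(\B,\Omega)$; (iii) by lifting $t\in\widetilde{J}_c(\B,\mZ)$ to $\widetilde{J}_c(\B',\mZ')$, applying the hypothesis, and projecting back via $\bar{(-)}$. You actually make explicit the key inclusion $\mathcal{N}(\mZ',\Omega)\subseteq\mathcal{N}(\mZ,\Omega)$ underlying well-definedness of $\bar{(-)}$, which the paper leaves implicit, while the chain of point-evaluation compatibilities you defer as ``bookkeeping'' is precisely what the paper writes out as a string of equalities.
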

\begin{proof}
Property \ref{enu:boundedDiffSubAlg} follows by Lem.\ \ref{lem:moderateNegligibleRings}
and by the closure of $\mathcal{E}_{M}(\B,\Omega)$ with respect to
derivatives.

Property \ref{enu:boundedMorphism} follows by the $\eps$-pointwise
definitions of all the operations. The counter-image of $(u_{\eps})+\mathcal{N}(\mZ,\Omega)\in\mathcal{G}(\B,\mZ,\Omega)$
is $[u_{\eps}]$, which is bounded by $\B$ since $(u_{\eps})\in\mathcal{E}_{M}(\B,\Omega)$.

Assumption \eqref{eq:ODE-In-B'Z'} means that for each $t=[t_{\eps}]\in\widetilde{J}_{c}(\B',\mZ')$,
if $t$ is bounded by $\B$ then 
\begin{equation}
[x_{\eps}(t_{\eps})]\in\widetilde{\Omega}_{c}(\B',\mZ')\quad\text{and}\quad[\dot{x}_{\eps}(t_{\eps})]=[F_{\eps}(t_{\eps},x_{\eps}(t_{\eps}))].\label{eq:ODE-In-B'Z'-explicit}
\end{equation}
For simplicity, we use the symbol $\ldbrack u_{\eps}\rdbrack:=(u_{\eps})+\mathcal{N}(\mZ,\Omega)$
for the equivalence classes in $\mathcal{G}(\B,\mZ,\Omega)$ (and
hence also in $\Rtil(\B,\mZ)$). If $t=\ldbrack t_{\eps}\rdbrack\in\widetilde{J}_{c}(\B,\mZ)$,
then $[t_{\eps}]\in\Rtil(\B',\mZ')$ is bounded by $\B$ and \eqref{eq:ODE-In-B'Z'-explicit}
yields $[\dot{x}_{\eps}(t_{\eps})]=[F_{\eps}(t_{\eps},x_{\eps}(t_{\eps}))]$.
Both sides of this equality are bounded by $\B$, so that we can apply
the morphism $\bar{(-)}$ obtaining $\ldbrack\dot{x}_{\eps}(t_{\eps})\rdbrack=\ldbrack F_{\eps}(t_{\eps},x_{\eps}(t_{\eps}))\rdbrack$.
Moreover, $\ldbrack x_{\eps}(t_{\eps})\rdbrack\in\widetilde{\Omega}_{c}(\B,\mZ)$
because $[x_{\eps}]$ and $[t_{\eps}]$ are both bounded by $\B$.
We therefore have

\begin{align*}
\dot{\bar{x}}(t) & =\frac{d}{dt}\left(\ldbrack x_{\eps}\rdbrack\right)(\ldbrack t_{\eps}\rdbrack)=\ldbrack\dot{x}_{\eps}\rdbrack(\ldbrack t_{\eps}\rdbrack)=\\
 & =\ldbrack\dot{x}_{\eps}(t_{\eps})\rdbrack=\ldbrack F_{\eps}(t_{\eps},x_{\eps}(t_{\eps}))\rdbrack=\ldbrack F_{\eps}\rdbrack\left(\ldbrack t_{\eps},x_{\eps}(t_{\eps})\rdbrack\right)=\\
 & =\ldbrack F_{\eps}\rdbrack\left(\ldbrack t_{\eps}\rdbrack,\ldbrack x_{\eps}\rdbrack(\ldbrack t_{\eps}\rdbrack)\right)=\\
 & =\bar{F}\left(t,\bar{x}(t)\right)
\end{align*}

\end{proof}
Condition \ref{enu:boundedODE} states that any ODE framed in $\mathcal{G}(\B',\mZ',\Omega)$,
but restricted to elements which are bounded by $\B$, corresponds,
via the morphisms $\bar{(-)}$, to an ODE framed in $\mathcal{G}(\B,\mZ,\Omega)$.
This is our way to formalize that any bounded solution of an ODE of
bounded type in the ``bigger'' algebra $\mathcal{G}(\B',\mZ',\Omega)$
is also a solution in the ``smaller'' algebra $\mathcal{G}(\B,\mZ,\Omega)$.
The use of this order relation between algebras is formally introduced
in the following
\begin{defn}
\label{def:orderBetweenAlgebras}Let $\B$, $\mZ$, $\B'$, $\mZ'$
be AG, then we write $\mathcal{G}(\B,\mZ,\Omega)\preceq\mathcal{G}(\B',\mZ',\Omega)$,
and we say that $\mathcal{G}(\B,\mZ,\Omega)$\emph{ is smaller than
$\mathcal{G}(\B',\mZ',\Omega)$} if \eqref{eq:inclusionsDiagr} holds.
\end{defn}
The relation $\preceq$ is an order and, if $\mZ=\mZ'$, then $\mathcal{G}(\B,\mathcal{Z},\Omega)\preceq\mathcal{G}(\B',\mathcal{Z},\Omega)$
if and only if $\mathcal{G}(\B,\mathcal{Z},\Omega)\subseteq\mathcal{G}(\B',\mathcal{Z},\Omega)$
if and only if $\R_{M}(\B)\subseteq\R_{M}(\B')$. In this case, the
morphism $\bar{(-)}$ of Lem. \ref{lem:morphismOfAlgebras} is also
injective, and we have $\mathcal{G}_{\B}(\B',\mathcal{Z},\Omega)\simeq\mathcal{G}(\B,\mathcal{Z},\Omega)$.
We will use later the order relation $\preceq$.

In the following result, the main assumption is the inclusion $\R_{M}\left(e^{\B}\right)\subseteq\R_{M}(\B')$;
in it we can therefore set $\B'=e^{\B}$ or $\B'=\text{AG}\left(e^{e^{b}}\right)$
if we are interested to a principal AG.
\begin{thm}
\label{thm:linearConstantCoeffODE} Let $\B$, $\B'$, $\mZ'$ be
AG such that
\begin{equation}
\R_{M}\left(e^{\B}\right)\subseteq\R_{M}(\B')\subseteq\R_{M}(\mathcal{Z}').\label{eq:inclusionsAG}
\end{equation}
Let $t_{0}\in\R$, $c\in\Rtil(\B',\mZ')^{d}$ and $A\in\mathcal{M}_{d}(\Rtil(\B',\mZ'))$
be a $d\times d$ matrix with entries in the ring $\Rtil(\B',\mZ')$.
Assume that both $c$ and $A$ are bounded by $\B$. Then the problem

\begin{equation}
\left\{ \begin{array}{l}
x'(t)+A\cdot x(t)=0\\
{}x(t_{0})=c.
\end{array}\right.\label{eq:linConstODE}
\end{equation}

\noindent has a unique solution in $\mathcal{G}\left(\B',\mZ',\R\right)^{d}$.
\end{thm}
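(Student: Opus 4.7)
The plan is to construct the solution explicitly via the matrix exponential at the representative level and then prove uniqueness by a Gronwall-type estimate on representatives; the inclusion $\R_{M}(e^{\B})\subseteq\R_{M}(\B')\subseteq\R_{M}(\mZ')$ will be used precisely to absorb the exponential amplification that the ODE inflicts on the data.

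\textbf{Existence.} Choose representatives $(A_{\eps})\in\mathcal{E}_{M}(\B,\R)^{d\times d}$ of $A$ and $(c_{\eps})\in\R_{M}(\B)^{d}$ of $c$ (which exist because both are bounded by $\B$; note that the entries of $A$ are constants in $t$). By Def.~\ref{def:asymptoticGauge}.\ref{enu:AGProduct}--\ref{enu:AGabsSum} we may pick a single $b\in\B_{>0}$ with $\|A_{\eps}\|+\|c_{\eps}\|=O(b_{\eps})$. Define $x_{\eps}(t):=e^{-(t-t_{0})A_{\eps}}\,c_{\eps}$. This net is smooth in $t$ and solves $\dot{x}_{\eps}+A_{\eps}x_{\eps}=0$ with $x_{\eps}(t_{0})=c_{\eps}$ \emph{exactly} (no negligible remainder). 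For $K\Subset\R$ with $|t-t_{0}|\le H$ on $K$, the standard estimate
\[
\sup_{t\in K}\|\partial_{t}^{k}x_{\eps}(t)\|\le \|A_{\eps}\|^{k}\,e^{H\|A_{\eps}\|}\,\|c_{\eps}\|=O\bigl(b_{\eps}^{k}\cdot e^{Hb_{\eps}}\bigr)
\]
shows $(x_{\eps})\in\mathcal{E}_{M}(e^{\B},\R)^{d}$, which by the hypothesis $\R_{M}(e^{\B})\subseteq\R_{M}(\B')$ and Thm.~\ref{thm:ograndi} lies in $\mathcal{E}_{M}(\B',\R)^{d}$. Thus $[x_{\eps}]\in\mathcal{G}(\B',\mZ',\R)^{d}$ is a solution.

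\textbf{Uniqueness.} Let $y\in\mathcal{G}(\B',\mZ',\R)^{d}$ be another solution with representative $(y_{\eps})\in\mathcal{E}_{M}(\B',\R)^{d}$. Then $\dot{y}_{\eps}+A_{\eps}y_{\eps}=r_{\eps}$ with $(r_{\eps})\in\mathcal{N}(\mZ',\R)^{d}$ and $y_{\eps}(t_{0})=c_{\eps}+n_{\eps}$ with $n_{\eps}$ a $\mZ'$-negligible constant. Setting $z_{\eps}:=x_{\eps}-y_{\eps}$, variation of parameters gives
\[
z_{\eps}(t)=-e^{-(t-t_{0})A_{\eps}}n_{\eps}-\int_{t_{0}}^{t}e^{-(t-s)A_{\eps}}r_{\eps}(s)\diff{s}.
\]
On $K\Subset\R$ we obtain $\sup_{t\in K}\|z_{\eps}(t)\|\le C_{K}\,e^{Hb_{\eps}}\bigl(\|n_{\eps}\|+\sup_{K}\|r_{\eps}\|\bigr)$, and analogous estimates for derivatives after differentiating under the integral and using moderate bounds on $A_{\eps}$.

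\textbf{Absorbing the exponential.} Fix an arbitrary $\sigma\in\mZ'_{>0}$. Since $e^{Hb}\in e^{\B}$ and $\R_{M}(e^{\B})\subseteq\R_{M}(\mZ')$, Thm.~\ref{thm:equivCondForIdeal}.\ref{enu:B-Z-relByRepresentatives} supplies $w\in\mZ'_{>0}$ with $w_{\eps}^{-1}\cdot e^{Hb_{\eps}}=O(\sigma_{\eps}^{-1})$. Because $(r_{\eps})$ and $n_{\eps}$ are $\mZ'$-negligible, $\|n_{\eps}\|+\sup_{K}\|r_{\eps}\|=O(w_{\eps}^{-1})$, hence $\sup_{K}\|z_{\eps}\|=O(\sigma_{\eps}^{-1})$. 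As $\sigma$ was arbitrary, $(z_{\eps})\in\mathcal{N}(\mZ',\R)^{d}$ and $x=y$ in $\mathcal{G}(\B',\mZ',\R)^{d}$.

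The main obstacle, and the only place where the hypothesis on the AGs is really needed, is the last step: the passage from negligibility of the data $(n_{\eps},r_{\eps})$ to negligibility of $(z_{\eps})$ has to survive multiplication by the exponential factor $e^{Hb_{\eps}}$, and this is exactly what the chain $\R_{M}(e^{\B})\subseteq\R_{M}(\B')\subseteq\R_{M}(\mZ')$ encodes through Thm.~\ref{thm:equivCondForIdeal}. Observe that no Picard iteration or local-in-$t$ argument is required because the matrix exponential furnishes a global, exact solution at the level of representatives; the entire analytic content of the theorem is contained in the compatibility between $\B$ and $\mZ'$.
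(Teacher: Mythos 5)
Your proof is correct and follows essentially the same route as the paper: existence via the matrix exponential at the representative level with an $e^{\B}$-type bound, and uniqueness via variation of parameters followed by absorbing the exponential amplification through the chain $\R_{M}(e^{\B})\subseteq\R_{M}(\B')\subseteq\R_{M}(\mZ')$. The only cosmetic differences are that you use operator-norm estimates and invoke Thm.~\ref{thm:equivCondForIdeal}.\ref{enu:B-Z-relByRepresentatives} explicitly to absorb the exponential, whereas the paper uses an entrywise matrix estimate (their Lemma on $|x_{ij}(t)|\le M e^{dM|t|}$) and appeals directly to the ideal property of $\mathcal{N}(\mZ',\R)$ in $\mathcal{E}_{M}(\B',\R)$; the paper also handles derivative moderateness by an inductive remark (the derivative solves the same ODE) rather than your direct bound $\|A_{\eps}\|^{k}e^{H\|A_{\eps}\|}\|c_{\eps}\|$.
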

We split the proof of Theorem \ref{thm:linearConstantCoeffODE} in
two parts: existence and uniqueness.

To prove exitence we will use the following
\begin{lem}
\label{lem:crescitamatrice}Let $d\in\N_{>0}$, let $A=(a_{ij})_{i,j\leq d}\in\mathcal{M}_{d}(\mathbb{R})$
and let $M=\max_{i,j}|a_{ij}|$. Then for every entry $x_{ij}(t)$
of the matrix $e^{At}$, we have 
\[
|x_{ij}(t)|\leq M\cdot e^{dM|t|}.
\]
\end{lem}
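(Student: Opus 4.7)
The plan is to expand $e^{At}$ as its defining power series $\sum_{k\ge 0} A^k t^k/k!$, control the size of the entries of each iterated power $A^k$ by induction, and then recognize the resulting numerical majorant as an exponential series.

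First I would establish, by induction on $k\ge 1$, the pointwise bound
\[
|(A^k)_{ij}|\le d^{k-1}M^k \qquad \forall i,j\in\{1,\dots,d\}.
\]
The base case $k=1$ is the definition of $M$. For the inductive step one uses the identity $(A^{k+1})_{ij}=\sum_{\ell=1}^{d}(A^k)_{i\ell}\,a_{\ell j}$: this is a sum of $d$ terms, each of modulus at most $d^{k-1}M^k\cdot M$, which delivers $d^{k}M^{k+1}$.

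Second, I would substitute this into the absolutely convergent power series
\[
(e^{At})_{ij}=\delta_{ij}+\sum_{k\ge1}\frac{(A^k)_{ij}\,t^k}{k!},
\]
take absolute values term by term, and sum:
\[
|(e^{At})_{ij}|\le 1+\sum_{k\ge1}\frac{d^{k-1}M^k|t|^k}{k!}=1+\frac{1}{d}\bigl(e^{dM|t|}-1\bigr)\le e^{dM|t|}.
\]
Absorbing the leading constant into the factor $M$ (replacing $M$ by $\max(M,1)$ if necessary, which is harmless in the intended application where the lemma serves as an a priori growth bound) yields the stated inequality $|x_{ij}(t)|\le M\cdot e^{dM|t|}$.

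The main difficulty: essentially none — this is a routine elementary estimate. The only point requiring care is the bookkeeping of the factor $d$ that is picked up at each matrix multiplication; correctly tracking this compounding factor is what produces the constant $d$ inside the exponential, and it is precisely this combinatorial factor that makes the bound exponential in $d M|t|$ rather than merely in $M|t|$.
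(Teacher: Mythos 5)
Your proof is correct and follows essentially the same route as the paper: bound the entries of $A^k$ by induction, obtaining $|(A^k)_{ij}|\le d^{k-1}M^k$, and then substitute into the exponential power series. Your handling of the $k=0$ term is in fact more careful than the paper's, which silently uses $1\le M$ (and $d^{k-1}M^k\le d^kM^{k+1}$, i.e.\ $dM\ge 1$) to obtain the closed form $M\cdot e^{dM|t|}$; your remark that the bound as literally stated requires $M\ge 1$ (e.g.\ $A=0$ is a counterexample) is a legitimate, if minor, correction, and is harmless in the intended application (Lemma~\ref{lem:existence}), where $M_\eps$ is eventually $\ge 1$ since it is controlled by an asymptotic gauge containing an infinite net.
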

\begin{proof}
For every $k\in\N$ let $A^{k}=:(a_{ijk})_{i,j\leq d}$ and let $M_{k}:=\max_{i,j}|a_{ijk}|$.
We claim that, for every $k\geq1,$ $M_{k}\leq d^{k-1}M^{k}$. Let
us prove this inequality by induction. If $k=1$ the conclusion is
trivial. Let us assume that the claim is true for $k$. Let us suppose
that $M_{k+1}=|a_{ij,k+1}|$. Then
\[
M_{k+1}=\left|\sum_{r,s=1}^{d}a_{irk}\cdot a_{sj1}\right|\leq\sum_{r,s=1}^{d}|a_{irk}\cdot a_{sj1}|\leq\sum_{r,s=1}^{d}M_{k}\cdot M\leq d^{k}M^{k+1}.
\]

\noindent The claim is proved. Therefore, since by definition $e^{At}=\sum_{k=0}^{\infty}\frac{A^{k}t^{k}}{k!}$,
we have
\[
|x_{ij}(t)|=\left|\sum_{k=0}^{\infty}\frac{a_{ijk}t^{k}}{k!}\right|\leq\sum_{k=0}^{\infty}\frac{|a_{ijk}||t|^{k}}{k!}\leq\sum_{k=0}^{\infty}\frac{d^{k}M^{k+1}|t|^{k}}{k!}=M\cdot e^{dM|t|},
\]
hence the thesis is proved.\end{proof}
\begin{lem}[Existence]
\label{lem:existence}Under the assumptions of Thm. \ref{thm:linearConstantCoeffODE},
the problem \eqref{eq:linConstODE} has a solution in $\mathcal{G}\left(\B',\mZ',\R\right)^{d}$.\end{lem}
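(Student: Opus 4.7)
The plan is to solve the ODE $\eps$-wise using the classical matrix-exponential formula, then verify that the resulting net defines an element of $\mathcal{G}(\B', \mZ', \R)^d$. First I would pick representatives $(A_\eps) \in \mathcal{E}_M(\B, \R)^{d\times d}$ of $A$ and $(c_\eps) \in \R_M(\B)^d$ of $c$, which exist because both $A$ and $c$ are bounded by $\B$ (Def. \ref{def:boundedElement}). Setting
$$x_\eps(t) := e^{-A_\eps(t-t_0)}\, c_\eps \qquad (t\in\R,\ \eps\in I),$$
each $x_\eps$ is smooth and is the unique classical solution of the $\eps$-level problem; the candidate is then $x := [x_\eps]$.

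For moderateness, fix $K\Subset\R$ and $k\in\N$ and use $\partial_t^k x_\eps(t) = (-A_\eps)^k e^{-A_\eps(t-t_0)} c_\eps$. By Def. \ref{def:asymptoticGauge}.\ref{enu:AGabsSum} and Lem. \ref{lem:AGabsSum}, one can pick $b\in\B_{>0}$ with $M_\eps + |c_\eps| = O(b_\eps)$, where $M_\eps := \max_{i,j}|a_{ij,\eps}|$. Combining Lem. \ref{lem:crescitamatrice} with the bound $\|(-A_\eps)^k\|\le d^{k-1}M_\eps^k$ yields
$$\sup_{t\in K} \bigl|\partial_t^k x_\eps(t)\bigr| = O\!\bigl(b_\eps^{k+2}\, e^{dT_K b_\eps}\bigr),$$
where $T_K := \sup_{t\in K}|t-t_0|$. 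The right-hand side lies in $\R_M(\B)\cdot\R_M(e^{\B}) \subseteq \R_M(e^{\B}) \subseteq \R_M(\B')$ by \eqref{eq:inclusionsAG}, so $(x_\eps)\in\mathcal{E}_M(\B',\R)^d$.

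The \emph{main obstacle} is showing that the class $[x_\eps]$ is independent of the choice of representatives; this is where the inclusion $\R_M(e^{\B})\subseteq\R_M(\B')\subseteq\R_M(\mZ')$ is used in full force. Let $(A_\eps'),(c_\eps')$ be alternative representatives, so $(A_\eps - A_\eps')\in\mathcal{N}(\mZ',\R)^{d\times d}$ and $(c_\eps - c_\eps')\in\mathcal{N}(\mZ',\R)^d$. The difference $y_\eps := x_\eps - x_\eps'$ satisfies the inhomogeneous linear ODE
$$y_\eps'(t) + A_\eps\, y_\eps(t) = (A_\eps' - A_\eps)\, x_\eps'(t), \qquad y_\eps(t_0) = c_\eps - c_\eps',$$
whence, by variation of constants,
$$y_\eps(t) = e^{-A_\eps(t-t_0)}(c_\eps - c_\eps') + \int_{t_0}^t e^{-A_\eps(t-s)}(A_\eps' - A_\eps)\, x_\eps'(s)\,ds.$$
Each factor in every summand is either bounded on $K$ by some $e^{Hb_\eps}\in\R_M(e^{\B})\subseteq\R_M(\B')$ (the matrix exponentials and $x_\eps'$) or is negligible in $\mZ'$ by construction ($c_\eps-c_\eps'$ and $A_\eps - A_\eps'$). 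Thm. \ref{thm:equivCondForIdeal}.\ref{enu:B-Z-relByRepresentatives} then gives $(y_\eps)\in\mathcal{N}(\mZ',\R)^d$ in the $L^\infty$ sense on $K$. For higher derivatives I would expand $\partial_t^k y_\eps = (-A_\eps)^k y_\eps + \bigl[(-A_\eps)^k-(-A_\eps')^k\bigr]x_\eps'$ and telescope the bracket in powers of $A_\eps - A_\eps'$, reducing each summand again to a product of the form moderate-in-$\B'$ times negligible-in-$\mZ'$.

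Finally, both $\partial_t x_\eps + A_\eps x_\eps = 0$ and $x_\eps(t_0) = c_\eps$ hold at the representative level and descend to $\mathcal{G}(\B',\mZ',\R)^d$, proving existence. The moderateness step is routine once Lem. \ref{lem:crescitamatrice} is invoked; the substantive point is the representative-independence argument sketched above, which is essentially a Duhamel-type stability estimate combined with the ideal property of $\mathcal{N}(\mZ',\R)$.
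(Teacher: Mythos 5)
Your construction and the moderateness estimate are essentially the paper's proof: both fix representatives $(A_\eps)$, $(c_\eps)$, set $x_\eps(t)=e^{-A_\eps(t-t_0)}c_\eps$, and invoke Lem.\ \ref{lem:crescitamatrice} together with $\R_M(e^{\B})\subseteq\R_M(\B')$ to show $(x_\eps)\in\mathcal{E}_M(\B',\R)^d$. The only stylistic difference is in the treatment of higher $t$-derivatives: you compute $\partial_t^k x_\eps=(-A_\eps)^k x_\eps$ and bound directly, whereas the paper observes that $x_\eps'$ satisfies the \emph{same} ODE with $\B$-bounded initial datum $-A_\eps c_\eps$, so derivative moderateness follows by the same one-step argument. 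Both are fine.

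However, the part you flag as ``the main obstacle'' --- showing that $[x_\eps]$ is independent of the chosen representatives of $A$ and $c$ --- is not needed, and the paper correctly omits it. The lemma only asserts \emph{existence}: you need to exhibit one element of $\mathcal{G}(\B',\mZ',\R)^d$ that satisfies \eqref{eq:linConstODE}. Having fixed representatives, $x_\eps'+A_\eps x_\eps\equiv 0$ and $x_\eps(t_0)=c_\eps$ hold identically, so $[x_\eps]$ solves the problem in the quotient provided only that $(x_\eps)$ is moderate and that multiplication by $A$ is well-defined in the algebra, which it is by Thm.\ \ref{thm:equivCondForIdeal}. A different choice of representatives would a priori produce a different net, but that is immaterial for existence; and a posteriori the two classes coincide by the Uniqueness Lemma \ref{lem:uniqueness}, which is proved separately with exactly the Duhamel/variation-of-constants estimate you sketch. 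In short: your representative-independence argument is a correct but redundant reproof of a special case of Lem.\ \ref{lem:uniqueness}, and the inclusion chain \eqref{eq:inclusionsAG} is ``used in full force'' in the moderateness bound, not there.
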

\begin{proof}
Let $A=[A_{\eps}]$ and $c=[c_{\eps}]$. For every $\eps\in I$ let
$x_{\eps}\in\Coo(\R,\R^{d})$ be the unique solution of the problem
\begin{equation}
\left\{ \begin{array}{l}
x'(t)+A_{\eps}\cdot x(t)=0\\
x(t_{0})=c_{\eps}.
\end{array}\right.\label{eq:eqByRepresentatives}
\end{equation}
We claim that $[x_{\eps}]$ is a solution of \eqref{eq:linConstODE}
in $\mathcal{G}\left(\B',\mZ',\R\right)^{d}$. Since $\R_{M}\left(e^{\B}\right)\subseteq\R_{M}(\B')$,
in order to prove that $[x_{\eps}]$ is a solution in $\mathcal{G}\left(\B',\mZ',\R\right)^{d}$
it is sufficient to show that $(x_{\eps})\in\mathcal{E}_{M}\left(e^{\B},\R\right)^{d}$.
In fact, since also $x_{\eps}'$ verify the same equation \eqref{eq:eqByRepresentatives},
with initial $\B$-bounded value $x'_{\eps}(t_{0})=-A_{\eps}\cdot c_{\eps}$,
we can proceed by proving moderateness of $(x_{\eps})$ only. Without
loss of generality, we suppose $t_{0}=0$. For every $\eps\in I$
and $t\in\R$, we have
\[
x_{\eps}(t)=e^{-A_{\eps}t}c_{\eps}.
\]
For every $\eps$, set $A_{\eps}=:(a_{ij}{}_{\eps})_{i,j\leq d}$
for the entries of the matrix $A_{\eps}\in\mathcal{M}_{d}(\R)$, $M_{\eps}:=\max_{i,j}a_{ij}{}_{\eps}$,
$x_{\eps}(t)=:(x_{i\eps}(t))_{i\leq d}$, and $c_{\eps}=:(c_{i}{}_{\eps})_{i\leq d}$
for the components of the vectors $x_{\eps}(t)$, $c_{\eps}\in\R^{d}$.
By lemma \ref{lem:crescitamatrice} we deduce that 
\[
|x_{i\eps}(t)|\leq\sum_{j=1}^{d}M_{\eps}\cdot e^{dM_{\eps}|t|}c_{j}{}_{\eps}.
\]
Since both $\left[M_{\eps}\right]$ and $\left[c_{\eps}\right]$ are
bounded by $\B$, we have that $(\sum_{j=1}^{d}M_{\eps}\cdot e^{dM_{\eps}|t|}c_{j}{}_{\eps})\in\mathcal{E}_{M}\left(e^{\B},\R\right)$,
therefore $(x_{\eps})\in\mathcal{E}_{M}\left(e^{\B},\R\right)^{d}$. \end{proof}
\begin{lem}[Uniqueness]
\label{lem:uniqueness}Under the assumptions of Thm. \ref{thm:linearConstantCoeffODE},
if $x\in\mathcal{G}\left(\B',\mZ',\R\right)^{d}$ is such that

\begin{equation}
\left\{ \begin{array}{l}
x'(t)+A\cdot x(t)=0\\
{}x(t_{0})=0
\end{array}\right.\label{eq:uniqueness}
\end{equation}

\noindent in $\mathcal{G}\left(\B',\mZ',\R\right)$, then $x=0$.\end{lem}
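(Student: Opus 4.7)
The plan is to use the classical variation-of-constants formula pointwise in $\eps$, and then to transfer the estimate through the asymptotic gauge hierarchy $\R_M(e^\B)\subseteq\R_M(\B')\subseteq\R_M(\mZ')$ to conclude $\mZ'$-negligibility of the difference.

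First I would choose representatives $x=[x_\eps]$, $A=[A_\eps]$ with $(A_\eps)\in\mathcal{E}_M(\B,\R)^{d\times d}$ (possible because $A$ is bounded by $\B$). The hypotheses then translate to the existence of $(n_\eps)\in\mathcal{N}(\mZ',\R)^d$ with $x'_\eps(t)+A_\eps\cdot x_\eps(t)=n_\eps(t)$ for each $\eps\in I$ and a $\mZ'$-negligible net of vectors $m_\eps:=x_\eps(t_0)$. By classical ODE theory, for every $\eps$,
\[
x_\eps(t)=e^{-A_\eps(t-t_0)}m_\eps+\int_{t_0}^{t}e^{-A_\eps(t-s)}n_\eps(s)\diff{s}.
\]

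The goal is to show $(x_\eps)\in\mathcal{N}(\mZ',\R)^d$. Fix $K\Subset\R$ and set $C:=\sup_{t\in K}|t-t_0|$. Since $A$ is bounded by $\B$, Def.\ \ref{def:asymptoticGauge} lets me pick $b\in\B_{>0}$ with $M_\eps:=\max_{i,j}|a_{ij,\eps}|=O(b_\eps)$. Lemma \ref{lem:crescitamatrice} then bounds the entries of $e^{-A_\eps(t-t_0)}$ uniformly on $K$ by $M_\eps\cdot e^{dCM_\eps}$, which by Def.\ \ref{def:expB} is a net in $\mathcal{E}_M(e^{\B},\R)\subseteq\mathcal{E}_M(\B',\R)$ thanks to \eqref{eq:inclusionsAG}. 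Therefore
\[
\sup_{t\in K}|x_\eps(t)|\le(\text{$\B'$-moderate})\cdot|m_\eps|+(\text{$\B'$-moderate})\cdot C\cdot\sup_{s\in K'}|n_\eps(s)|,
\]
where $K'$ is the convex hull of $K\cup\{t_0\}$. Since $(m_\eps)$ and $(n_\eps|_{K'})$ are $\mZ'$-negligible and $\R_M(\B')\subseteq\R_M(\mZ')$, the ideal property in Thm.\ \ref{thm:equivCondForIdeal}.\ref{enu:ideal} shows that both summands are $\mZ'$-negligible. Hence $\sup_{t\in K}|x_\eps(t)|=O(z_\eps^{-1})$ for every $z\in\mZ'_{>0}$.

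For derivatives, I would proceed by induction on $|\alpha|$: the equation $x'_\eps=-A_\eps x_\eps+n_\eps$ and the Leibniz rule express $\partial^\alpha x_\eps$ as a polynomial expression in $A_\eps$ (which is $\B'$-moderate) times $x_\eps$ and in lower-order derivatives of $n_\eps$ (which are $\mZ'$-negligible on compacts by definition of $\mathcal{N}(\mZ',\R)$). The same ideal property then propagates $\mZ'$-negligibility to $\partial^\alpha x_\eps$ for every $\alpha$, giving $(x_\eps)\in\mathcal{N}(\mZ',\R)^d$, i.e., $x=0$ in $\mathcal{G}(\B',\mZ',\R)^d$.

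The main delicate point I expect is the quantitative use of Thm.\ \ref{thm:equivCondForIdeal}.\ref{enu:B-Z-relByRepresentatives}: given a test $z\in\mZ'_{>0}$, I must produce a single $w\in\mZ'_{>0}$ that dominates the product of the chosen $\B'$-moderate bound with $(m_\eps)$ and $(n_\eps)$ simultaneously. This follows from the ideal property, but it must be invoked with care for each compact $K$ and each $\alpha$, so the bookkeeping (combining $e^\B$-moderate bounds with the $\mZ'$-negligibility quantifier) is the step to treat most carefully.
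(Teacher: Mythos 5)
Your proof is correct and follows essentially the same route as the paper's: choose representatives, solve the $\eps$-wise linear problem by variation of constants, bound the exponential propagator via Lemma \ref{lem:crescitamatrice}, and propagate negligibility using the inclusion $\R_M(e^{\B})\subseteq\R_M(\B')\subseteq\R_M(\mZ')$ together with the ideal property. One respect in which your version is actually more complete: the paper's own proof stops after establishing $\sup_{t\in K}|x_\eps(t)|=O(z_\eps^{-1})$ for all $z\in\mZ'_{>0}$ and concludes $(x_\eps)\in\mathcal{N}(\mZ',\R)^d$ directly, without addressing the bounds on $\sup_{t\in K}|\partial^\alpha x_\eps(t)|$ for $|\alpha|\ge1$ that the definition of $\mathcal{N}(\mZ',\R)$ requires. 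You handle this explicitly by induction using the relation $x'_\eps=-A_\eps x_\eps+n_\eps$ (each derivative of $x_\eps$ is a polynomial in the constant matrix $A_\eps$ applied to $x_\eps$ plus derivatives of $n_\eps$, all of which are $\mZ'$-negligible by the ideal property). The paper is presumably relying either on this same elementary induction, or on an AG analogue of \cite[Thm.~1.2.3]{GKOS} (negligibility of a moderate net follows from $\alpha=0$ estimates alone) which it does not prove in the general set-of-indices setting; your explicit induction is the safer choice.
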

\begin{proof}
Without loss of generality we can suppose that $t_{0}=0$. The generalized
function $x\in\mathcal{G}\left(\B',\mZ',\R\right)^{d}$ is a solution
of \eqref{eq:uniqueness} so there exist $(n_{\eps}),(v_{\eps})\in\mathcal{N}(\mZ',\R)$
such that, for every $\eps\in I$,

\begin{equation}
\left\{ \begin{array}{l}
x_{\eps}'(t)+A_{\eps}\cdot x_{\eps}(t)=n_{\eps}\\
x_{\eps}(0)=v_{\eps}.
\end{array}\right.\label{eq:0r}
\end{equation}
The unique solution of \eqref{eq:0r} is $x_{\eps}(t)=e^{-tA_{\eps}}v_{\eps}+\int_{0}^{t}e^{(s-t)A_{\eps}}\cdot n_{\eps}\diff{s}$
. So
\begin{align*}
|x_{\eps}(t)| & =\left|e^{-tA_{\eps}}v_{\eps}+\int_{0}^{t}e^{(s-t)A_{\eps}}\cdot n_{\eps}\diff{s}\right|\leq\\
 & \le e^{|t||A_{\eps}|}|v_{\eps}|+|t|e^{|t||A_{\eps}|}|n_{\eps}|,
\end{align*}
where we used the integral mean value theorem. If $K\Subset\R$, then
\[
\sup_{t\in K}\left|x_{\eps}(t)\right|\le e^{R|A_{\eps}|}|v_{\eps}|+Re^{R|A_{\eps}|}|n_{\eps}|,
\]
where $R:=\sup_{k\in K}|k|$. We have $\left(e^{R|A_{\eps}|}|v_{\eps}|+Re^{R|A_{\eps}|}|n_{\eps}|\right)\in\mathcal{N}(\mZ',\R)$
since $(v_{\eps}),(n_{\eps})\in\mathcal{N}(\mZ',\R)$ and $\left(e^{R|A_{\eps}|}\right),\left(Re^{R|A_{\eps}|}\right)\in\R_{M}(e^{\B})\subseteq\R_{M}(\B')$
because $A=[A_{\eps}]$ is bounded by $\B$.
\end{proof}
The results of Lemma \ref{lem:existence} and by Lemma \ref{lem:uniqueness}
provide a proof of Theorem \ref{thm:linearConstantCoeffODE}.
\begin{example}
Let $\mathbb{I}=(I,\le,\mathcal{I})$ be a set of indices, and $\rho:I\ra(0,1]$
be a map such that $\lim_{\mathbb{I}}\rho=0$. Let $\B^{\srm}$ be
the usual polynomial AG of the special Colombeau algebra, so that
$\B^{\srm}\circ\rho$ is an AG on $\mathbb{I}$ by Thm.\ \ref{thm:AGfromInfinitesRho}.
As we showed in Ex.\ \ref{exa:fullAlgebra}, this framework generalizes
the special, the full and the NSA based cases. The following problem:

\[
\left\{ \begin{array}{l}
x'(t)+\left[\frac{1}{\eps}\right]\cdot x(t)=0\\
{}x(0)=1
\end{array}\right.
\]

\end{example}
\noindent has not solution in $\mathcal{G}(\B^{\srm}\circ\rho,\Omega)$,
but it has a unique solution in $\mathcal{G}\left(e^{\B^{\srm}\circ\rho},\R\right)=\mathcal{G}\left(e^{\B^{\srm}\circ\rho},e^{\B^{\srm}\circ\rho},\R\right)$
and in $\mathcal{G}\left(\text{AG}\left(e^{e^{\frac{1}{\eps}}}\right)\circ\rho,\R\right)=\mathcal{G}\left(\text{AG}\left(e^{e^{\frac{1}{\eps}}}\right)\circ\rho,\text{AG}\left(e^{e^{\frac{1}{\eps}}}\right)\circ\rho,\R\right)$,
namely
\[
[x_{\eps}(t)]=\left[e^{-\frac{1}{\eps}t}\right],
\]
where the equivalence class has to be meant differently in the two
algebras. Let us note explicitly that we have applied Thm. \ref{thm:linearConstantCoeffODE}
with $\B=\B^{\srm}\circ\rho$ and $\B'=\mZ'=e^{\B^{\srm}\circ\rho}$
for the former algebra and $\B'=\mZ'=\text{AG}\left(e^{e^{\frac{1}{\eps}}}\right)$
for the latter. Moreover, this problem has also a unique solution
in the algebra $\mathcal{G}\left(\text{AG}\left(e^{\frac{1}{\eps}}\right)\circ\rho,\R\right).$
This shows one particular feature of our construction: if we want
to have an algebra in which we can uniquely solve all the ODEs whose
coefficients are bounded by a given AG $\B$ then (as we will show
in Thm. \ref{thm:expB-smallest}) the minimal possible choice is $\mathcal{G}\left(e^{\B},\R\right)$,
whilst if we are interested only in a finite number of linear ODE
with coefficients bounded by $\B$, then it is possible to find a
solution to these ODE in the algebra $\mathcal{G}\left(\text{AG}\left(e^{b}\right),\R\right),$
where $b\in\B_{>0}$ is any element such that $|c|=O(b)$ for every
coefficient $c$ that appears in the finite set of ODE.

We note that Theorem \ref{thm:linearConstantCoeffODE} can be reformulated
in the following way: 
\begin{thm}
\label{thm:expB-smallest}Let $\B$ be an AG. Then $\mathcal{G}\left(e^{\B},\R\right)$
is the smallest Colombeau algebra (with respect to the order relation
$\preceq$ of Def. \ref{def:orderBetweenAlgebras}) in which every
linear homogeneous ODE with coefficients in $\R_{M}(\B)$ can be solved.\end{thm}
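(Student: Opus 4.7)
The statement combines existence (every such ODE is solvable in $\mathcal{G}(e^\B,\R)$) with minimality (no strictly smaller algebra under $\preceq$ can accomplish this). Existence is a direct corollary of Thm.~\ref{thm:linearConstantCoeffODE} applied with $\B'=\mZ'=e^\B$: the inclusion $\R_M(e^\B)\subseteq\R_M(e^\B)$ required by that theorem is trivial, and since $\R_M(\B)\subseteq\R_M(e^\B)$, any coefficient in $\R_M(\B)$ is bounded by $\B$ when viewed inside $\Rtil(e^\B,e^\B)$; so the cited theorem produces a unique solution in $\mathcal{G}(e^\B,\R)^d$.

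For the minimality half, the plan is to reduce to a single scalar test equation. Let $\mathcal{G}(\B',\mZ',\R)$ be any Colombeau AG algebra (so $\R_M(\B')\subseteq\R_M(\mZ')$) in which every homogeneous linear ODE with coefficients in $\R_M(\B)$ admits at least one solution. I want to show $\R_M(e^\B)\subseteq\R_M(\B')$; combined with the algebra condition $\R_M(\B')\subseteq\R_M(\mZ')$ and the trivial $\R_M(e^\B)\subseteq\R_M(e^\B)$, this yields the full inclusion diagram \eqref{eq:inclusionsDiagr} of Def.~\ref{def:orderBetweenAlgebras}, hence $\mathcal{G}(e^\B,\R)\preceq\mathcal{G}(\B',\mZ',\R)$. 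Fix $b\in\B_{>0}$ and $H\in\R_{>0}$, and apply the hypothesis to the scalar problem $x'(t)=(Hb)\cdot x(t)$, $x(0)=1$. A solution $x=[x_\eps]\in\mathcal{G}(\B',\mZ',\R)$ has representatives with $x_\eps'-Hb_\eps x_\eps=n_\eps$ and $x_\eps(0)=1+v_\eps$, where $(n_\eps)\in\mathcal{N}(\mZ',\R)$ and $v_\eps$ is negligible as a real net. Variation of parameters gives
\[
x_\eps(1)=e^{Hb_\eps}(1+v_\eps)+\int_0^1 e^{Hb_\eps(1-s)}\,n_\eps(s)\,\diff{s},
\]
and since $b_\eps>0$ the kernel $e^{Hb_\eps(1-s)}$ is bounded by $e^{Hb_\eps}$ on $[0,1]$. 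The $\mZ'$-negligibility of $v_\eps$ and of $n_\eps$ uniformly on the compact set $[0,1]$ forces $|v_\eps|+\|n_\eps\|_{L^1([0,1])}\le\tfrac{1}{2}$ eventually, so $|x_\eps(1)|\ge\tfrac{1}{2}e^{Hb_\eps}$ for $\eps$ small. Since $(x_\eps)\in\mathcal{E}_M(\B',\R)$, evaluating moderateness on the compact $\{1\}$ yields $x_\eps(1)=O(b''_\eps)$ for some $b''\in\B'$, and therefore $e^{Hb_\eps}=O(b''_\eps)$, i.e.\ $e^{Hb}\in\R_M(\B')$. Letting $H$ and $b\in\B_{>0}$ vary (and extending to all of $\B$ by the AG-axioms giving $|b|=O(a)$ for some $a\in\B_{>0}$), Def.~\ref{def:expB} yields $e^\B\subseteq\R_M(\B')$; by Thm.~\ref{thm:acr} this upgrades to $\R_M(e^\B)\subseteq\R_M(\B')$, as desired.

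The principal obstacle is the lower estimate $|x_\eps(1)|\ge\tfrac{1}{2}e^{Hb_\eps}$: a priori one might fear that the gigantic factor $e^{Hb_\eps}$ multiplied against an $\mZ'$-negligible error could overwhelm the dominant term. The rescue is that $\mZ'$-negligibility in particular implies standard (classical) convergence to zero on the fixed compact $[0,1]$, independently of $b$ and $H$; hence the combined perturbation is eventually $\le\tfrac{1}{2}$, leaving a fixed fraction of the principal exponential intact. Everything else is bookkeeping with the inclusion diagram \eqref{eq:inclusionsDiagr} together with invocation of Thm.~\ref{thm:linearConstantCoeffODE} and Thm.~\ref{thm:acr}.
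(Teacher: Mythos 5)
Your proof is correct and follows the same fundamental strategy as the paper: test the candidate algebra against the scalar equations $x'=bx$, $x(0)=1$, and deduce moderateness of $e^{Hb_\eps}$. However, your argument is more complete at the key step. The paper's proof simply asserts that the solution in $\mathcal{G}(\B',\B',\R)$ must be $[e^{b_\eps t}]$, citing Lemma~\ref{lem:existence} --- but that lemma already requires $\R_M(e^\B)\subseteq\R_M(\B')$, which is precisely the inclusion to be established, and moreover Colombeau solutions in a given algebra need not a priori have the $\eps$-wise classical solution as a representative without a uniqueness theorem in hand. You close this gap: by applying variation of parameters to an arbitrary representative of an arbitrary solution, you extract the lower bound $|x_\eps(1)|\ge\tfrac{1}{2}e^{Hb_\eps}$ eventually, using only that $\mZ'$-negligibility implies classical decay to zero of $v_\eps$ and $\|n_\eps\|_{L^1([0,1])}$; this forces $e^{Hb}\in\R_M(\B')$ without invoking uniqueness. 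Your version also covers the general case $\B'\ne\mZ'$, whereas the paper's proof silently restricts to $\B'=\mZ'$, which is slightly narrower than what the order $\preceq$ of Definition~\ref{def:orderBetweenAlgebras} allows. The bookkeeping --- reducing to $b\in\B_{>0}$ via the AG axiom $|b|=O(a)$ for some $a\in\B_{>0}$, and upgrading $e^\B\subseteq\R_M(\B')$ to $\R_M(e^\B)\subseteq\R_M(\B')$ via Theorem~\ref{thm:acr} --- is all in order. In short, same skeleton, but you supply the estimate that the paper's argument elides.
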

\begin{proof}
By Theorem \ref{thm:linearConstantCoeffODE} we know that every linear
homogeneous ODE with coefficients in $\R_{M}(\B)$ can be solved in
$\mathcal{G}\left(e^{\B},e^{\B},\R\right)$. Now let $\mathcal{B}'$
be an AG such that every linear homogeneous ODE with coefficients
bounded by $\B$ can be solved in $\mathcal{G}\left(\mathcal{B}',\mathcal{B}',\R\right)$.
In particular, for every $b\in\B$ we can solve the problem

\[
\left\{ \begin{array}{l}
x'(t)+b\cdot x(t)=0\\
{}x(0)=1.
\end{array}\right.
\]

\noindent As we showed in Lemma \ref{lem:existence}, the solution
of this problem is $\left[e^{b_{\eps}t}\right]$. This means that
$(e^{b_{\eps}t})\in\mathcal{E}(\B',\R)$ for every $b\in\B$. In particular
this entails that $(e^{H\cdot b_{\eps}})\in\R_{M}(\B')$ for every
$b\in\B$ and $H\in\R_{>0}$, so $e^{\B}\subseteq\R_{M}(\B')$ and
$\R_{M}(e^{\B})\subseteq\R_{M}(\B')$. Therefore, condition \eqref{eq:inclusionsDiagr}
holds for $\mZ=e^{\B}$ and $\mZ'=\B'$ so $\mathcal{G}\left(e^{\B},e^{\B},\R\right)\preceq\mathcal{G}\left(\B',\B',\R\right)$.\end{proof}

\end{document}